\newcommand{\CC}{\mathbb{C}}
\newcommand{\HH}{\mathbb{H}}
\newcommand{\NN}{\mathbb{N}}
\newcommand{\QQ}{\mathbb{Q}}
\newcommand{\RR}{\mathbb{R}}
\newcommand{\ZZ}{\mathbb{Z}}
\newcommand{\cA}{\mathcal{A}}
\newcommand{\cC}{\mathcal{C}}
\newcommand{\cF}{\mathcal{F}}
\newcommand{\cH}{\mathcal{H}}
\newcommand{\cM}{\mathcal{M}}
\newcommand{\cO}{\mathcal{O}}
\newcommand{\cT}{\mathcal{T}}
\newcommand{\sC}{\mathscr{C}}
\newcommand{\sD}{\mathscr{D}}
\newcommand{\sE}{\mathscr{E}}
\newcommand{\sF}{\mathscr{F}}
\newcommand{\fA}{\mathsf{A}}
\newcommand{\fB}{\mathsf{B}}
\newcommand{\fC}{\mathsf{C}}
\newcommand{\fE}{\mathsf{E}}
\newcommand{\fG}{\mathsf{G}}
\newcommand{\fW}{\mathsf{W}}
\newcommand{\fkH}{\mathfrak{H}}
\newcommand{\fkN}{\mathfrak{N}}
\newcommand{\RP}{\operatorname{\RR P}}
\newcommand{\CP}{\operatorname{\CC P}}
\newcommand{\Homeo}{\operatorname{Homeo}}
\newcommand{\Homeop}{\operatorname{Homeo^+}}
\newcommand{\Isom}{\operatorname{Isom}}
\newcommand{\closure}[1]{\overline{#1}}
\newcommand{\Fix}[1]{\operatorname{Fix}(#1)}
\newcommand{\SL}{\operatorname{SL}_2}
\newcommand{\GL}{\operatorname{GL}_2}
\newcommand{\PSL}[1]{\operatorname{PSL}_2(#1)}
\newcommand{\PGL}[1]{\operatorname{PGL}_2(#1)}
\newcommand{\Aut}{\operatorname{Aut}}
\newcommand{\Inn}{\operatorname{Inn}}
\newcommand{\Out}{\operatorname{Out}}
\newcommand{\Mod}{\operatorname{Mod}}
\newcommand{\id}{\operatorname{Id}}
\newcommand{\im}{\operatorname{Im}}
\newcommand{\tr}{\operatorname{tr}}
\newcommand{\Spec}{\operatorname{Spec}}
\newcommand{\card}{\operatorname{card}}
\title[Length spectra of $\operatorname{Out}(F_2)$ and $\mathcal{H}_2$]{Uniform difference between the length spectra of $\operatorname{Out}(F_2)$ and the genus two handlebody group}
\author[Kim]{KyeongRo Kim}
\address{\hskip-\parindent
Research institute of Mathematics\\
Seoul National University\\
GwanAkRo 1, Gwanak- Gu, Seoul 08826, Korea}
\email{kyeongrokim14@gmail.com}
\author[Seo]{Donggyun Seo}
\address{\hskip-\parindent
Research institute of Mathematics\\
Seoul National University\\
GwanAkRo 1, Gwanak- Gu, Seoul 08826, Korea}
\email{seodonggyun@snu.ac.kr}
\date{\today}
\begin{document}

\begin{abstract} 
In this paper, we analyze the natural homomorphism from the genus $g$ handlebody group to the outer automorphism group of the free group with rank $g$, in terms of length spectra. In general, the preimage of each fully irreducible outer automorphism contains a potentially infinite number of pseudo-Anosov mapping classes. Our study reveals a crucial relationship: for any pseudo-Anosov map in this preimage, its stretch factor must equal or exceed that of the corresponding fully irreducible outer automorphism. Notably, in the case of genus two, we establish that the minimum stretch factor among these pseudo-Anosov maps is less than ten times the stretch factor of the fully irreducible outer automorphism. These results partially address a question by Hensel and have practical implications, including a lower bound for the geodesic counting problem in the genus two handlebody group.
\end{abstract}

\subjclass{Primary ; Secondary }
\keywords{Outer automorphism, Handlebody group}
 
\maketitle

\section{Introduction} 

\subsection{The Main Question}Since Thurston's classification of mapping class elements \cite{Thurston88}, the class of pseudo-Anosov maps has played an important role in the fields of geometric topology and geometric group theory. After Thurston's work, Bestvina and Handel \cite{BestvinaHandel92} studied irreducible elements in the outer automorphism group of a free group, as an analogous concept to pseudo-Anosov maps. This naturally raises the question of how pseudo-Anosov maps are related to irreducible elements. 

One possible candidate for establishing the connection is the handlebody groups. Let $g$ be an integer bigger than $1$. The \emph{genus $g$ handlebody group} $\cH_g$ is the mapping class group of the genus $g$ handlebody $V_g$. The boundary $\partial V_g$ of $V_g$ is the genus $g$ surface.
Naturally, the inclusion from $\partial V_g$ to $V_g$ induces an injective homomorphism from $\cH_g$ to $\Mod(\partial V_g)$ (See \cite[Lemma~3.1]{Hensel20}). In this sense, we think of  $\cH_g$ as a subgroup of $\Mod(\partial V_g)$. 
On the other hand, since  the fundamental group of $V_g$ is the free group $F_g$ of rank $g$, the action of $\cH_g$ on $\pi_1(V_g)$ induces a surjective homomorphism
$A$ from $\cH_g$ to the outer automorphism group  $\Out(F_g)$ of $F_g$. 

One may expect that for any pseudo-Anosov map $\psi$ in $\cH_g$, $A(\psi)$ is irreducible, or that for any irreducible element $\varphi$ in $\Out(F_g)$, every element in $A^{-1}(\varphi)$ is pseudo-Anosov. Unfortunately, these are not true in general.
Nonetheless, for any irreducible element $\varphi$ in $\Out(F_g)$, we may find a pseudo-Anosov element in $A^{-1}(\varphi)$. Therefore, it is still meaningful to understand the relation between $\varphi$ and the pseudo-Anosov elements in $A^{-1}(\varphi)$. 

In this perspective, Hensel \cite{Hensel20} proposed the following question. Oertel also asked a similar question \cite[Problem 9.8]{Oertel02}.
\begin{ques}
[Question 6.8  in [\cite{Hensel20}]\label{Que:Hensel}
Let $\varphi$ be a fully irreducible element in $\Out(F_g)$. Consider the action of $\varphi$ on the Culler--Vogtmann Outer space $\mathrm{CV}_g$. How does the translation length with respect to the Lipschitz metric $d_{\mathrm{CV}}$ relate to possible translation lengths of $\psi\in A^{-1}(\varphi)$ acting on the Teichm\"uller space $\cT(\partial V_g)$ with the Thurston metric $d_{Th}$? 
\end{ques}

The terms ``translation length'' are ambiguous since there are two standard notions for measuring translation length on a metric space. One is the minimal translation length and the other is the stable translation length. Let $(X,d)$ be a metric space and $\gamma$ an isometry on $(X,d)$. The \emph{minimal translation length} $t_{(X,d)}(\gamma)$ of $\gamma$ is defined as 
$$t_{(X,d)}(\gamma)=\inf_{x\in X}d(x,\gamma(x)).$$
The \emph{stable translation length} $\tau_{(X,d)}(\gamma)$ of $\gamma$ is defined as follows. Given $x_0\in X$, we can see that 
$$\tau_{(X,d)}(\gamma)=\lim_{n\to \infty} \frac{d(x_0,\gamma^n(x_0))}{n}$$
exists. Moreover, $\tau_{(X,d)}(\gamma)$ does not depend on the choice of $x_0$.
However, both the Lipschitz metric and the Thurston metric are not actual metrics since those are asymmetric. 
In spite of the lack of symmetry, both minimal and stable translation lengths are well defined for the isometries of $(\cT(\partial V_g),d_{Th})$ and $(\mathrm{CV}_g,d_{\mathrm{CV}})$. 

In fact, the Teichm\"uller space is not uniquely geodesic under the Thurston metric while the Teichm\"uller space is uniquely geodesic under the Teichm\"uller metric $ d_{Teich}$. This fact makes it difficult to understand the minimal translation length of a pseudo-Anosov map with respect to $d_{Th}$. However, for the stable translation lenght of $d_{Th}$,  it is known by \cite[Corollary ~1.3]{PapaSu16} that $$\tau_{(\cT(\partial V_g), d_{Th})}(\gamma)=t_{(\cT(\partial V_g), d_{Teich})}(\gamma)$$ for all pseudo-Anosov maps $\gamma$ in $\Mod(\partial V_g)$. In this reason, we use the Teichm\"uller metric instead of the Thurston metric. Also,  translation length refers to the minimal translation length.

\subsection{The Main Result}

Let $\varphi$ be a fully irreducible element in $\Out(F_g)$, and let $T(\varphi)$ denote the set of translation lengths $t_{(\cT(\partial V_g), d_{Teich})}(\psi)$ of pseudo-Anosov elements $\psi$ in $A^{-1}(\varphi)$. In general, $T(\varphi)$ is an unbounded subset of  the real line $\RR$. However, by the classical result of the length spectrum of a mapping class group (see \refthm{discretSpec} for the statement), we can see that $T(\varphi)$ forms a closed and discrete subset of $\RR$. Thus, it is worthwhile to determine the minimum of $T(\varphi)$.

In this paper, we first show that $t_{(\mathrm{CV}_g,d_{\mathrm{CV}})}(\varphi)$ provides a lower bound of $T(\varphi)$. 
\begin{restate}{Theorem}{Thm:lowerBound}
For every $g>1$ and a pseudo-Anosov mapping class $\psi \in \mathcal{H}_g$, if $A(\psi)$ is fully irreducible, then $$ t_{(\mathrm{CV}_g, d_{\mathrm{CV}})}(A(\psi))\leq t_{(\cT(\partial V_g), d_{Teich})}(\psi).$$
\end{restate}
Especially, for the genus two case, we show that we can always find a pseudo-Anosov element in $A^{-1}(\varphi)$, the translation length of which is slightly bigger than the translation length of $\varphi$.
\begin{restate}{Theorem}{Thm:almostMinimal}
    For each fully irreducible outer automorphism $\varphi \in \Out(F_2)$, there exists a pseudo-Anosov mapping class $\psi \in A^{-1}(\varphi)$ such that $$0 \leq t_{(\cT(\partial V_2), d_{Teich})}(\psi) - t_{(\mathrm{CV}_2, d_{\mathrm{CV}})}(\varphi) < \log 10.$$
\end{restate}
By \refthm{almostMinimal}, we can conclude that the minimum of $T(\varphi)$ lies in the closed interval $[t_{(\mathrm{CV}_2, d_{\mathrm{CV}})}(\varphi),t_{(\mathrm{CV}_2, d_{\mathrm{CV}})}(\varphi) +\log 10]$. In other words, the difference between the minimum of $T(\varphi)$ and $t_{(\mathrm{CV}_2, d_{\mathrm{CV}})}(\varphi)$ is bounded by $\log10$. Moreover, this bound does not depend on $\varphi$. Therefore,
\refthm{almostMinimal} gives a partial answer for the genus two case of \refque{Hensel}.
\subsection{An Application: the Geodesic Counting Problem}

As a quick application of \refthm{almostMinimal}, we consider the geodesic counting problem in the genus two handlebody group.
This is motivated by Eskin--Mirzakhani's work \cite{EskinMirzakhani11}, Chowla--Cowles--Cowles \cite{ChowlaCowlesCowles80}, and Kapovich--Pfaff \cite{KapovichPfaff18}. In \cite{EskinMirzakhani11}, Eskin--Mirzakhani solved the geodesic counting problem in the moduli space of a compact surface. Similarly, Kapovich--Pfaff \cite{KapovichPfaff18} studied the geodesic counting problem for the outer automoprhism group of a free group.

For each $R > 0$, we write $\fkH_2(R)$ for the number of conjugacy classes of pseudo-Anosov mapping classes of translation length at most $R$ in the genus two handlebody group. Due to the following theorem, we can see that $\fkH_2(R)$ is finite for all $R>0$ and so the geodesic counting problem for a handlebody group is well defined.
\begin{restate}{Thorem}{Thm:discretSpecH}
    Let $g \geq 2$. For any $D\geq 1$, there exists only finitely many conjugacy classes of pseudo-Anosov elements of $\cH_g$ with stretch factor at most $D$.
\end{restate}
 Then, the classical estimation for the geodesic counting problem in $\Out(F_2)$ (see \cite{KapovichPfaff18}), combined with \refthm{almostMinimal},
implies the following theorem.

\begin{restate}
{Theorem}{Thm:countH2}
For any $\epsilon>0$, there is $R_\epsilon>0$ such that 
$$(1-\epsilon)\frac{e^{2R}}{400R}< \fkH_2(R) < \infty$$ for all $R> R_\epsilon$.
\end{restate}

\refthm{countH2} says that $\fkH_2(R)$ has at least exponential asymptotic growth.

\subsection{A Historical Remark}

\refthm{almostMinimal} establishes a direct connection between the geometries of the genus two handlebody group and the outer automorphism group of the rank-two free group, as implied by the homomorphism $A$.
While numerous researchers \cite{Zieschang61, McMillan63, Luft78, McCullough85, Hain08} have intuitively constructed and analyzed the homomorphism $A$ in various ways, our approach focuses on investigating its length-spectral and dynamical behavior, which distinguishes it from previous methods.

Geometric simiarity between handlebody groups and outer automorphism groups of free groups has  been discovered indirectly. 
In  \cite{Masur86}, Masur studied an action of a handlebody group on the space of projective measured laminations.
He proved that there exists a unique limit set such that the handlebody group acts properly discontinuously on the complement of its closure.
Notice, on the contrary, an orbit of the surface mapping class group is dense.
In this dynamical point of view, an outer automorphism group of a free group is more similar to a handlebody group than to a mapping class group.
That is to say, Guirardel \cite{Guirardel00} found a unique limit set on the boundary of $\mathrm{CV}_g$ so that $\Out(F_g)$ acts properly discontinuously on the complement of its closure.

The genus two handlebody group is comparatively more accessible than higher genus handlebody groups. This assertion was supported by Hamenst\"adt--Hensel \cite{HamenstadtHensel21}, who demonstrated that the genus two handlebody group acts properly cocompactly on a $\mathrm{CAT}(0)$ cube complex, rendering it biautomatic and possessing a quadratic Dehn function. Furthermore, Chesser \cite{Chesser22} and Chesser--Leininger \cite{ChesserLeininger23} provided comprehensive characterizations of finitely generated stable subgroups and purely pseudo-Anosov subgroups within the genus two handlebody group.

\subsection{Organization}

We review and introduce basic terminologies in \refsec{preliminary}.
In \refsec{spectrum}, we investigate the topology of the length spectrum of a handlebody group,  reviewing basic properties about translation length of Teichm\"uller spaces and Culler--Vogtmann outer spaces. Some statements in this section have not yet been proven before. 
In \refsec{OutF2}, we arrange well-known facts about $\Out(F_2)$. This section does not contain any new result, but we will use these facts without mentioned in the succeeding sections. In \refsec{trace},
We analyze the trace and the anti-trace of an element in $\GL(\mathbb{Z})$.
In \refsec{GenusTwoHandlebody}, we show the main theorem, \refthm{almostMinimal}. In \refsec{GeodesicCounting}, we discuss the geodesic counting problem in a handlebody group as an application of \refthm{almostMinimal} and show \refthm{countH2}.

\subsection{Acknowledgement}

We would like to thank to Hyungryul Harry Baik, Inhyeok Choi, Hongtaek Jung, and  Chandrika Sadanand for helpful conversations.
The first author was supported by the National Research  Foundation of Korea Grant funded by the Korean Government (NRF-2022R1C1C2009782).
The second author was supported by the National Research Foundation of Korea Grant funded by the Korean Government (NRF-2021R1C1C2005938).

\section{Preliminary} \label{Sec:preliminary}
In this section, we review and introduce basic notions. 

First, we write  $\cO_G(h)$ for the conjugacy class of $h$ in a group $G$. In many cases, it is necessary to specify the ambient group $G$. 

Then, $S_{g,n}^b$ denotes the connected orientable surface of genus $g$ with $n$ puntures and $b$ boundary components.
The \emph{mapping class group} $\Mod(S_{g,n}^b)$ of $S_{g,p}^b$ is the group of isotopy classes of orientation presering homeomorphisms of $S_{g,p}^b$, fixing the boundary pointwise. More generally, we also define the \emph{extended mapping class group} $\Mod^\pm(S_{g,n}^b)$ of $S_{g,p}^b$ to be the group of isotopy classes of homeomorphisms of $S_{g,p}^b$, fixing the boundary pointwise. We denote the stretch factor of a pseudo-Anosov map $\varphi$ by $\lambda_\varphi$. It is well known that for a pseudo-Anosov map $\varphi$ in $\Mod(S_{g,n})$,  $\lambda_\varphi=\lambda_\psi$ for all $\psi\in \cO_{\Mod(S_{g,n})}(\varphi)$. Also, we denote the Teichm\"uller space of $S_{g,p}$ by $\cT(S_{g,p})$. It is well known that the (minimal) translation length of a pseudo-Anosov map $\varphi$ with respect to $d_{Teich}$ is equal to $\log \lambda_\varphi$, that is, $$t_{(\cT(S_{g,p}),d_{Teich})}(\varphi)=\log \lambda_\varphi.$$ 

 For convenience, we do not distinguish strictly a curve  with the isotopy class of that and a homeomorphism  with the corresponding mapping class. When  $\alpha$ is a curve on a surface, we denote the \emph{Dehn twist}  about $\alpha$ by $T_\alpha$. In this paper, the Dehn twist refers to the right Dehn twist.

The \emph{genus $g$ handlebody} $V_g$ is  the three manifold with boundary obtained from the 3–ball by attaching $g$ one-handles. A curve $m$ in $\partial V_g$ is called a \emph{meridian} of $V_g$ if there is a properly embedded closed disk $D$ in $V_g$, the boundary of which is $m$. The \emph{genus $g$ handlebody group} $\cH_g$ is the group of isotopy classes of orientation preserving homeomorphisms of $V_g$. As mentioned in  the introduction, it is well known that the inclusion map  $i:\partial V_g  \hookrightarrow V_g$ induces the injective homomorphism $i_\#:\cH_g\hookrightarrow \Mod(\partial V_g)$. See \cite[Section~3]{Hensel20} for the detailed discussion. Hence, we can think of $\cH_g$ as a subgroup of $\Mod(\partial V_g)$. 

We denote the free group of rank $g$ by $F_g$ and the outer automoprhism group of $F_g$ by $\Out(F_g)$. Note that $\pi_1(V_g)\cong F_g$.
Let $g$ be an integer bigger than $1$. After fixing an identification between $\pi_1(V_g)$ and $F_g$, the action of $\cH_g$ on $\pi_1(V_g)$ induces natually the surjective homeomoprhism $A:\cH_g\to \Out(F_g)$. It is proven by Luft \cite{Luft78} that the kernel $\ker(A)$ of $A$ is infinitely generated by the Dehn twists about meridians. See also \cite[Section~6]{Hensel20} for a survey about the properties of $A$.

In this paper, we denote the stretch factor of a fully irreducible element $\varphi$ in $\Out(F_g)$, $g>1$ by $\mu_\varphi$. As in Teichm\"uller spaces, it is known that for each fully irreducible element $\varphi$ in $\Out(F_g)$, 
$$t_{(\mathrm{CV}_g,d_{\mathrm{CV}})}(\varphi)=\log \mu_\varphi.$$ Also, the stretch factor of $\varphi$ can be estimated as follows. Fix a free base $S$ of $F_g$.
For a word $w$ in $F_g$ with respect to $S$,  we denotes the \emph{word length} of $w$ by $|w|_S$. Then, $$|\cO_{\Out(F_g)}(w)|_S:=\min_{v\in\cO_{\Out(F_g)}(w)}|v|_S.$$ As $\varphi$ is fully irreducible, we can take a word $w_0$ in $F_g$ with respect to $S$ so that $\cO_{\Out(F_g)}(\varphi^n(w_0))\neq \cO_{\Out(F_g)}(\varphi^m(w_0))$ whenever $n\neq m$. 
Then, the limit 
$$\lim_{n\to \infty}\sqrt[n]{|\cO_{\Out(F_g)}(\varphi^n(w_0))|_S}$$
exists and it does not depend on the choices of $S$ and $w_0$. Moreover, it is well known that 
$$\mu_\varphi=\lim_{n\to \infty}\sqrt[n]{|\cO_{\Out(F_g)}(\varphi^n(w_0))|_S}.$$ From this, we can  see that the stretch factor is invariant under  conjugation.

For every square matrix $M$, the trace of $M$ is denoted by $\tr(M)$.
The \emph{anti-trace}, the sum of the entries of the anti-diagonal, is written by $\tr^*(M)$.

\section{ Spectra} \label{Sec:spectrum}

\subsection{The length spectrum of a moduli space}

Let $S$ be a hyperbolic surface of finte type. The \emph{Teichmuller length spectrum} $\Spec(\cM(S))$ of moduli space $\cM(S)$  is the set 
$$\{\log(\lambda_\varphi): \text{$\varphi$ is a pseudo-Anosov mapping class in $\Mod(S)$}\}.$$

The following theorem is well known. See \cite[Theorem~14.9]{FarbMargalit12} for the detailed exposition. 

\begin{thm}\label{Thm:discretSpec}
Let $g,n\geq 0$ and $S_{g,n}$ be a hyperbolic surface of finite type with genus $g$ and $n$ punctures. For any $D\geq 1$, there exists only finitely many conjugacy classes of pseudo-Anosov elements of $\Mod(S_{g,n})$ with stretch factor at most $D$. In particular,  $\Spec(\cM(S_{g,n}))$ is a  closed, discrete subset of $\RR$.
\end{thm}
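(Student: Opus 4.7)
The plan is to exploit the action of $\Mod(S_{g,n})$ on the Teichm\"uller space $(\cT(S_{g,n}), d_{Teich})$ using three standard ingredients: (i) every pseudo-Anosov $\varphi \in \Mod(S_{g,n})$ admits a unique invariant Teichm\"uller geodesic $A_\varphi$ along which it translates by $\log\lambda_\varphi$; (ii) Mumford's compactness theorem, asserting that the $\epsilon$-thick part of $\cM(S_{g,n})$ is compact for every $\epsilon > 0$; and (iii) proper discontinuity of the $\Mod(S_{g,n})$-action on $\cT(S_{g,n})$. The goal is to show that each conjugacy class of pseudo-Anosov with $\lambda_\varphi \leq D$ admits a representative whose axis passes through a fixed compact set, after which proper discontinuity yields finiteness.

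The key reduction is the following claim: there exists $\epsilon = \epsilon(D) > 0$ such that every closed Teichm\"uller geodesic in $\cM(S_{g,n})$ of length at most $\log D$ is contained in the $\epsilon$-thick part. The input is a classical estimate controlling how the hyperbolic length of a short simple closed curve $\alpha$ varies along a Teichm\"uller geodesic $X_t$, roughly of the form $|\log\ell(\alpha, X_t) - \log\ell(\alpha, X_s)| \leq C|t-s|$ for a universal constant $C$. Since every point of $\cT(S_{g,n})$ has systole at most the Bers constant $B$, a closed geodesic of length $\leq \log D$ that penetrates the $\epsilon$-thin part cannot reemerge to the $B$-thick part in time $\leq \log D$ unless $\epsilon$ is bounded below by an explicit function of $D$, giving the desired universal $\epsilon(D)$.

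Granting the reduction, let $K \subset \cM(S_{g,n})$ be the compact $\epsilon$-thick part and fix a compact lift $\tilde K \subset \cT(S_{g,n})$ surjecting onto $K$. Let $N$ denote the closed $(\log D)$-neighborhood of $\tilde K$, which is compact by properness of $d_{Teich}$. Every conjugacy class $[\varphi]$ of pseudo-Anosov with $\lambda_\varphi \leq D$ admits a representative $\varphi'$ whose axis meets $\tilde K$ at some point $X$, and then $d(X, \varphi'(X)) = \log\lambda_\varphi \leq \log D$ forces $\varphi'(X) \in N$. By proper discontinuity, the set $\{\psi \in \Mod(S_{g,n}) : \psi(\tilde K) \cap N \neq \emptyset\}$ is finite, and every such $\varphi'$ lies in it. This bounds the number of conjugacy classes, and the ``in particular'' is immediate: $\Spec(\cM(S_{g,n})) \cap (-\infty, \log D]$ is finite for every $D$, whence the spectrum is closed and discrete in $\RR$.

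The main obstacle is the thickness estimate sketched in the second paragraph; once this is in place, the remainder is formal. The estimate is classical, following from Wolpert's lemma on hyperbolic length variation along Teichm\"uller geodesics, or equivalently from Minsky's product region description of the thin part of $\cT(S_{g,n})$. The delicate point is to make the dependence $\epsilon = \epsilon(D)$ explicit by passing carefully between hyperbolic length, extremal length, and Teichm\"uller distance on the thin part.
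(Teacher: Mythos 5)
The paper itself offers no argument; it simply defers to Theorem~14.9 of Farb--Margalit's primer. Your proposal follows exactly the strategy of that reference: reduce to showing the Teichm\"uller axis of a pseudo-Anosov with $\lambda_\varphi \le D$ meets a fixed $\epsilon$-thick part, invoke Mumford compactness to produce a compact set in $\cT(S_{g,n})$, and finish with proper discontinuity of the $\Mod(S_{g,n})$-action. The second half of your argument (compact lift $\tilde K$, the $(\log D)$-neighborhood $N$, proper discontinuity, and the deduction that $\Spec(\cM(S_{g,n}))$ is closed and discrete) is correct.

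The gap is in the justification of the thickness claim. You assert that ``since every point of $\cT(S_{g,n})$ has systole at most the Bers constant $B$, a closed geodesic of length $\le \log D$ that penetrates the $\epsilon$-thin part cannot reemerge to the $B$-thick part in time $\le \log D$.'' But ``systole at most $B$'' is an \emph{upper} bound on the shortest curve, which is the wrong direction: it says nothing about the geodesic ever reaching a region where the systole is bounded \emph{below}, and without first knowing that the closed geodesic visits a fixed thick region, Wolpert's estimate gives no contradiction. The Bers constant does not enter the correct argument. What is actually needed is that a pseudo-Anosov preserves no isotopy class of essential simple closed curve, together with the fact that $S_{g,n}$ carries at most $k = 3g-3+n$ pairwise disjoint such curves. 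Concretely, let $\epsilon_M$ be the Margulis constant and suppose $\ell_X(\alpha) < \epsilon$ at some $X$ on the axis of $\varphi$, with $\epsilon < \epsilon_M D^{-2k}$. Since $d_{Teich}(X, \varphi^j X) = j\log\lambda_\varphi \le j\log D$, Wolpert's inequality gives
\[
\ell_X(\varphi^{-j}\alpha) \;=\; \ell_{\varphi^j X}(\alpha) \;\le\; D^{2j}\,\epsilon \;<\; \epsilon_M \qquad\text{for } j=0,\dots,k.
\]
Hence the $k+1$ curves $\alpha, \varphi^{-1}\alpha, \dots, \varphi^{-k}\alpha$ are all Margulis-short at $X$, therefore pairwise disjoint or coincident; as at most $k$ can be disjoint, two of them coincide, so $\varphi^j\alpha = \alpha$ for some $1\le j\le k$, contradicting that $\varphi$ is pseudo-Anosov. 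This is the step where $\epsilon(D)$ acquires its dependence on $D$ and on the topology of $S_{g,n}$, and it is the step your sketch does not actually supply.
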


Similarly, we can define the \emph{spectrum} for a Handlebody group $\cH_g$ as follows:
$$\Spec(\cH_g):=\{\log(\lambda_\varphi): \text{$\varphi$ is a pseudo-Anosov mapping class in $\cH_g$}\}.$$

Then, in virtue of the following theorem, we can show that the above theorem can descent to the case for handlebody groups. See \cite[Theorem~5.14]{Hensel20}.

\begin{thm}\label{Thm:finiteConj}
Let $\varphi$ be any pseudo-Anosov mapping class in $\Mod(S_g)$. Then, $\varphi$ can be contained in at most finitely many conjugates of $\cH_g$ in $\Mod(S_g)$.
\end{thm}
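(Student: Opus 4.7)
The plan is to recast the statement geometrically as a finiteness result on handlebody structures on $S_g$, then to use North--South dynamics of $\varphi$ on the space of projective measured laminations together with a finiteness input from 3-manifold topology.

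First, I would pass to a geometric reformulation. Since the normalizer $N_{\Mod(S_g)}(\cH_g)$ equals $\cH_g$ (a fact one can take as a separate input), distinct conjugates of $\cH_g$ correspond to distinct left cosets in $\Mod(S_g)/\cH_g$, and hence to distinct isotopy classes of handlebody structures $V$ on $S_g$ (i.e.\ isotopy classes of embeddings of $V_g$ with boundary $S_g$). The condition $\varphi\in h\cH_g h^{-1}$ translates to: the mapping class $\varphi$ of $S_g=\partial V$, for $V=h\cdot V_g$, extends to a mapping class of $V$. Thus the statement becomes: only finitely many isotopy classes of handlebody structures $V$ on $S_g$ admit $\varphi$ as an extendable self-homeomorphism.

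Second, I would constrain the admissible $V$'s using the dynamics of $\varphi$ on $\mathcal{PML}(S_g)$. For any such $V$, let $\Lambda(V)\subset\mathcal{PML}(S_g)$ denote the closure of the projective meridian locus of $V$ (the Masur limit set of the corresponding handlebody group, as discussed in the introduction). This set is closed, infinite, and $\varphi$-invariant because the meridian locus itself is. Since $\varphi$ has North--South dynamics on $\mathcal{PML}(S_g)$ with only fixed points the projective stable and unstable laminations $[\mathcal{L}^\pm(\varphi)]$, any closed infinite $\varphi$-invariant subset contains both of these. Hence for each admissible $V$ there are sequences of meridians of $V$ converging projectively to $[\mathcal{L}^+(\varphi)]$ and to $[\mathcal{L}^-(\varphi)]$.

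Third, I would upgrade this dynamical containment to genuine finiteness. Picking any meridian $m_V$ of $V$ and applying $\varphi^{\pm N}$ for large $N$, one obtains a pair $\varphi^{N}(m_V),\varphi^{-N}(m_V)$ of meridians of $V$ which, being close in $\mathcal{PML}$ to the filling laminations $[\mathcal{L}^\pm(\varphi)]$, themselves fill $S_g$. A filling pair of essential simple closed curves on $S_g$ can be realized as a subsystem of meridians in at most finitely many isotopy classes of handlebody structures on $S_g$; this can be deduced from Haken-type finiteness for incompressible surfaces in a compact 3-manifold applied, for instance, to $S_g\times[0,1]$. This yields the desired finiteness of $V$.

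The main obstacle is the third step: promoting the qualitative containment $[\mathcal{L}^\pm(\varphi)]\in\Lambda(V)$ into a quantitative bound on the number of admissible $V$. The filling-pair argument above rests on a 3-manifold finiteness statement whose precise form requires care, especially in extracting a handlebody structure from a filling configuration of curves. An alternative avoiding 3-manifold topology is to work entirely in the curve graph $\mathcal{C}(S_g)$: by Masur--Schleimer quasi-convexity of disk graphs, the loxodromic axis of $\varphi$ in $\mathcal{C}(S_g)$ lies in a uniform neighborhood of the disk graph $\mathcal{D}(V)$ for each admissible $V$, and a rigidity argument for quasi-convex disk graphs sharing a common quasi-geodesic axis should yield the finiteness directly.
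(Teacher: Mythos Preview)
The paper does not supply its own proof of this theorem; it is quoted as \cite[Theorem~5.14]{Hensel20}, so there is no in-paper argument to compare your outline against.

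Your first two steps are sound. Conjugates of $\cH_g$ correspond bijectively to handlebody structures once one knows the normalizer equals $\cH_g$ (this is \reflem{trivialNorm} here), and North--South dynamics does force $[\mathcal{L}^\pm(\varphi)]$ into the closure of the meridian set of every admissible $V$.

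The genuine gap is in Step~3, and it is structural rather than a matter of care. The filling pair $(\varphi^{N}(m_V),\varphi^{-N}(m_V))$ you produce depends on $V$ through the seed meridian $m_V$. Even granting that a \emph{fixed} filling pair bounds disks in at most finitely many handlebodies (this is true, via a ball decomposition of $V$ along the two compressing disks; your appeal to Haken finiteness in $S_g\times[0,1]$ does not obviously yield it), all you conclude is that each admissible $V$ lies in a finite set $F_V$ indexed by the pair you built for that particular $V$. Since there are infinitely many filling pairs near $([\mathcal{L}^+],[\mathcal{L}^-])$ in $\mathcal{PML}\times\mathcal{PML}$, nothing prevents the sets $F_V$ from being pairwise distinct, and no global finiteness follows. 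Your curve-graph alternative has the same defect in disguise: knowing that the axis of $\varphi$ lies in a uniform neighbourhood of every $\mathcal{D}(V)$ only says that each $\mathcal{D}(V)$ meets a fixed ball in the locally infinite graph $\cC(S_g)$, which does not bound the number of $V$'s. What is missing is a rigidity statement to the effect that a minimal filling lamination lies in the Masur limit set of at most finitely many handlebodies; establishing this requires input beyond the dynamics on $\mathcal{PML}$---for instance an analysis of how $\mathcal{L}^+$ extends to a lamination of $V$ by disks and why its complementary pieces are then essentially determined.
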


\begin{lem}\label{Lem:trivialNorm}
Let $g\geq 2$. The normalizer of $\cH_g$ in $\Mod(S_g)$ is exactly $\cH_g$.
\end{lem}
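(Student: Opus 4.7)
The inclusion $\cH_g\subseteq N_{\Mod(S_g)}(\cH_g)$ is immediate, so I focus on the reverse direction. My plan is to reduce the statement to two standard structural facts about handlebody groups and then close with a one-line conjugation argument, rather than going through the spectral machinery of the preceding theorems.

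The two classical facts I will invoke are: (i) the handlebody group $\cH_g$ equals the full stabilizer in $\Mod(\partial V_g)$ of the set $\mathcal{D}$ of isotopy classes of meridians of $V_g$; and (ii) for a simple closed curve $c$ on $\partial V_g$, the Dehn twist $T_c$ belongs to $\cH_g$ if and only if $c$ is a meridian. Both are classical: (i) says that a mapping class sending every disk-bounding curve to a disk-bounding curve extends to the handlebody (due to Luft and McCullough); the easy direction of (ii) is realized by the twist about the compressing disk, while the reverse requires a short 3-manifold argument showing that if $T_c$ extends to a homeomorphism of $V_g$, then $c$ must bound a compressing disk.

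Granted (i) and (ii), the argument is short. I would take $\varphi \in N_{\Mod(S_g)}(\cH_g)$ and an arbitrary meridian $m$; then $T_m \in \cH_g$ by (ii), and
\[
T_{\varphi(m)} \;=\; \varphi\, T_m\, \varphi^{-1} \;\in\; \varphi\, \cH_g\, \varphi^{-1} \;=\; \cH_g,
\]
using the standard identity $\varphi T_c \varphi^{-1} = T_{\varphi(c)}$ together with the assumption that $\varphi$ normalizes $\cH_g$. Another application of (ii) forces $\varphi(m)$ to be a meridian. Running the same reasoning with $\varphi^{-1}$ in place of $\varphi$ yields the reverse containment, so $\varphi(\mathcal{D}) = \mathcal{D}$, and then (i) gives $\varphi \in \cH_g$.

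The hard part is not the conjugation itself but mobilizing (i) and (ii) cleanly from the handlebody literature; in particular the nontrivial direction of (ii) is what ensures that the conjugate Dehn twist $T_{\varphi(m)}$ lying in $\cH_g$ actually forces $\varphi(m)$ to be disk-bounding. If a more dynamical route is preferred, one can alternatively argue via Masur's analysis of the limit set of $\cH_g$ on the space of projective measured laminations of $\partial V_g$: that limit set is canonically attached to $\cH_g$, any normalizer element must preserve it, and its rational points are exactly the meridian classes; so $\varphi$ again sends meridians to meridians and (i) concludes the proof.
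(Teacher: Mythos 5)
Your proof is correct and is essentially the paper's argument: the paper likewise conjugates a meridian Dehn twist by a normalizing element $h$ via $hT_\alpha h^{-1}=T_{h(\alpha)}$, uses that every Dehn twist in $\cH_g$ is about a meridian (your fact (ii), cited there as Hensel's Theorem~5.6) to conclude that $h$ sends meridians to meridians, and then applies the fact that such a mapping class lies in $\cH_g$ (your fact (i), cited there as Hensel's Corollary~5.11). The only cosmetic differences are that you additionally run the argument on $\varphi^{-1}$ and sketch a dynamical alternative via Masur's limit set, neither of which changes the substance.
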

\begin{proof}
Let $h$ be an element in $\Mod(S_g)$ with $h\cH_g h^{-1}=\cH_g$. For any meridian $\alpha$ of $V_g$, the conjugation of the Dehn twist about $\alpha$ by $h$ is the Dehn twist about $h(\alpha)$, that is, $hT_\alpha h^{-1}=T_{h(\alpha)}$. Since, by \cite[Theorem~5.6]{Hensel20}, every Dehn twist in $\cH_g$ is the Dehn twist about a meridian, we can conclude that $h$ maps each meridian to a meridian. Therefore,  by \cite[Corollary~5.11]{Hensel20}, $h\in \cH_g$. This implies that the normalizer of $\cH_g$ in $\Mod(S_g)$ is a subgroup of $\cH_g$. Thus, the normalizer is exactly $\cH_g$.     
\end{proof}

\begin{thm}\label{Thm:discretSpecH}
Let $g \geq 2$. For any $D\geq 1$, there exists only finitely many conjugacy classes of pseudo-Anosov elements of $\cH_g$ with stretch factor at most $D$. In particular,  $\Spec(\cH_g)$ is a closed, discrete subset of $\RR$.
\end{thm}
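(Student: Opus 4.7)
The plan is to combine \refthm{discretSpec}, \refthm{finiteConj}, and \reflem{trivialNorm} in a straightforward counting argument. The overarching reduction is: since every $\cH_g$-conjugacy class of a pseudo-Anosov map is contained in a unique $\Mod(S_g)$-conjugacy class (and they share the same stretch factor), and since \refthm{discretSpec} already guarantees only finitely many $\Mod(S_g)$-classes with stretch factor at most $D$, it suffices to fix a pseudo-Anosov $\varphi_0 \in \cH_g$ and prove that $\cO_{\Mod(S_g)}(\varphi_0) \cap \cH_g$ decomposes into only finitely many $\cH_g$-conjugacy classes.

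To control this intersection, I would parametrize its elements as $g\varphi_0 g^{-1}$ where $g \in \Mod(S_g)$ satisfies $g\varphi_0 g^{-1} \in \cH_g$, equivalently $\varphi_0 \in g^{-1}\cH_g g$. By \refthm{finiteConj}, only finitely many conjugates of $\cH_g$ in $\Mod(S_g)$ contain $\varphi_0$; label them $K_1, \ldots, K_k$, and pick $g_i$ with $g_i^{-1}\cH_g g_i = K_i$. Next, \reflem{trivialNorm} states that the normalizer of $\cH_g$ in $\Mod(S_g)$ is $\cH_g$ itself, so for each $i$ the set $\{g \in \Mod(S_g) : g^{-1}\cH_g g = K_i\}$ is precisely the coset $\cH_g \cdot g_i$. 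Hence
\[
\{g \in \Mod(S_g) : g\varphi_0 g^{-1} \in \cH_g\} = \bigsqcup_{i=1}^{k} \cH_g \cdot g_i,
\]
and every $\psi \in \cO_{\Mod(S_g)}(\varphi_0) \cap \cH_g$ has the form $(hg_i)\varphi_0 (hg_i)^{-1} = h (g_i \varphi_0 g_i^{-1}) h^{-1}$ for some $h \in \cH_g$ and some $i$. Therefore $\psi$ is $\cH_g$-conjugate to one of the $k$ elements $g_i \varphi_0 g_i^{-1}$, producing at most $k$ $\cH_g$-conjugacy classes. Combined with \refthm{discretSpec}, this proves the first assertion.

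For the ``in particular'' statement, since all elements of a single $\cH_g$-conjugacy class share the same stretch factor, the first assertion implies that only finitely many values of $\log \lambda_\varphi$ with $\lambda_\varphi \le D$ can occur; thus $\Spec(\cH_g) \cap (-\infty, \log D]$ is finite for every $D \ge 1$, so $\Spec(\cH_g)$ is closed and discrete in $\RR$.

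I do not expect a serious obstacle: each ingredient (\refthm{discretSpec}, \refthm{finiteConj}, \reflem{trivialNorm}) is exactly tailored to a step of the reduction. The only conceptual subtlety is the use of \reflem{trivialNorm} to pass from ``$k$ conjugate subgroups containing $\varphi_0$'' to ``$k$ cosets $\cH_g g_i$'' — without trivial normalizer, each subgroup would contribute a coset of $N_{\Mod(S_g)}(\cH_g)$, potentially many $\cH_g$-cosets. I would take care in writing the proof to make the coset identification explicit, and to note that $\cH_g g_i = \cH_g g_j$ may happen for $i \neq j$, so the bound $k$ on the number of $\cH_g$-classes may not be sharp but is certainly finite, which is all we need.
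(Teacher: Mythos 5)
Your proposal is correct and follows essentially the same route as the paper's proof: reduce via \refthm{discretSpec} to showing each $\Mod(S_g)$-conjugacy class meets $\cH_g$ in only finitely many $\cH_g$-classes, use \refthm{finiteConj} to get the finitely many conjugates of $\cH_g$ containing $\varphi_0$, and use \reflem{trivialNorm} to turn each such conjugate into a single $\cH_g$-coset, yielding the decomposition $\cO_{\Mod(S_g)}(\varphi_0)\cap \cH_g=\bigcup_{i}\cO_{\cH_g}(g_i\varphi_0 g_i^{-1})$. The only difference is expository: you phrase the normalizer step as a coset identification while the paper argues directly that $hh_i^{-1}\in\cH_g$.
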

\begin{proof}
Let $\varphi$ be a pseudo-Anosov element in $\cH_g$.
By \refthm{finiteConj}, we can take a finite sequence $\{h_1, h_2,\cdots, h_k\}$ of elements of $\Mod(S_g)$ so that  if $\varphi\in h^{-1}\cH_g h$ for some $h\in \Mod(S_g)$, then there is a unique $i\in \{1,2, \cdots, k\}$ such that $h^{-1}\cH_g h=h_i^{-1}\cH_g h_i$.

Now, fix $\psi \in \cO_{\Mod(S_g)}(\varphi)\cap \cH_g$. As $\cO_{\Mod(S_g)}(\varphi)$, $\psi=h \varphi  h^{-1}$ for some $h\in \Mod(S_g)$. Then, there is a unique $i\in \{1,2,\cdots,k\}$ such that $h^{-1}\cH_g h=h_i^{-1}\cH_g h_i$. Since $\cH_g=(hh_i^{-1})\cH_g (hh_i^{-1})^{-1}$, by \reflem{trivialNorm} $hh_i^{-1}\in \cH_g$ and so $h=fh_i$ for some $f\in \cH_g$. Therefore, 
$$\psi=h \varphi  h^{-1}=(fh_i)\varphi(fh_i)^{-1}=fh_i\varphi h_i^{-1}f^{-1}=f\varphi^{h_i}f^{-1}$$
and $\psi \in \cO_{\cH_g}(\varphi^{h_i})$. Thus, 
$$\cO_{\Mod(S_g)}(\varphi)\cap \cH_g=\bigcup_{j=1}^k \cO_{\cH_g}(\varphi^{h_j}).$$ 

Combining \refthm{discretSpec} with this observation, we can get the desired result. 
\end{proof}

\subsection{Spectra for outer automorphisms}
We define the \emph{spectrum} $\Spec
(\varphi)$ for an outer automorphism $\varphi$ to be the set 
$$\{\lambda_\psi: A(\psi)=\varphi \text{ and $\psi$ is pseudo-Anosov}\}.$$
Now, we present the topology of spectra.

\begin{lem}\label{Lem:specId}
Let $g\geq 2$. The spectrum $\Spec(\id_{\Out(F_g)})$ for the trivial outer automorphism $\id_{\Out(F_g)}$ is a non-empty unbounded subset of $\RR$
\end{lem}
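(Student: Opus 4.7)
The plan is to reduce to producing a single pseudo-Anosov element $\psi_0$ in $\ker(A) \leq \cH_g$. Once we have such $\psi_0$, every power $\psi_0^n$ with $n \geq 1$ remains in $\ker(A)$, is pseudo-Anosov, and has stretch factor $\lambda_{\psi_0^n} = \lambda_{\psi_0}^n$. Therefore $\{n \log \lambda_{\psi_0} : n \geq 1\}$ is an unbounded subset of $\Spec(\id_{\Out(F_g)})$, which simultaneously gives non-emptiness and unboundedness.

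To produce $\psi_0$, I would invoke a Thurston--Penner construction of pseudo-Anosov mapping classes from filling pairs of multicurves. By Luft's theorem recalled in \refsec{preliminary}, the Dehn twist about any meridian of $V_g$ lies in $\ker(A)$. So it suffices to exhibit two meridian multicurves $\alpha, \beta$ of $V_g$ whose union $\alpha \cup \beta$ fills $\partial V_g$: then $\psi_0 := T_\alpha T_\beta^{-1}$ is pseudo-Anosov on $\partial V_g$, and, being a product of Dehn twists about meridians, it lies in $\ker(A)$.

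The main obstacle is therefore the existence of a filling pair of meridian multicurves on $\partial V_g$ for every $g \geq 2$. The plan is to start from the standard maximal disjoint meridian system $m_1, \ldots, m_g$, which cuts $\partial V_g$ into a $2g$-holed sphere, and then apply a suitable handlebody homeomorphism $\phi \in \cH_g$ so that $\phi(m_1), \ldots, \phi(m_g)$, still meridians because $\cH_g$ preserves the meridian class, together with the original system fill $\partial V_g$. Existence of such a $\phi$ is classical in three-manifold topology; it can be made fully explicit in low genus by a direct picture, and in general follows from known transitivity properties of the $\cH_g$-action on disjoint meridian systems.
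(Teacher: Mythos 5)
Your overall strategy coincides with the paper's: produce one pseudo-Anosov element of $\ker(A)$ as a Thurston--Penner product of Dehn twists about meridians (which lie in $\ker(A)$ by Luft's theorem), then pass to powers to get unboundedness. The place where your argument has a genuine gap is the existence of a filling pair of meridian multicurves. You say this ``follows from known transitivity properties of the $\cH_g$-action on disjoint meridian systems,'' but transitivity on cut systems only lets you carry one disjoint meridian system to another; it says nothing about being able to choose the image so that, together with the original system, it fills $\partial V_g$. Filling is a statement about the intersection pattern and complementary regions, not about orbit membership, so this step does not follow as stated. The explicit-picture route is fine for genus two but you would still owe an argument for general $g$.

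The paper closes exactly this gap in a cleaner, uniform way: it fixes a pseudo-Anosov $f \in \cH_g$ (such an element is well known to exist) and a single meridian $\alpha$, and takes the second curve to be $f^k(\alpha)$ for $k$ large. Since $f$ is pseudo-Anosov, $\alpha$ and $f^k(\alpha)$ fill $\partial V_g$ and have intersection number $>2$ for $k$ large, and since $\cH_g$ preserves meridians, $f^k(\alpha)$ is again a meridian, so $T_{f^k(\alpha)} \in \ker(A)$. Then $T_\alpha T_{f^k(\alpha)}$ is pseudo-Anosov by Thurston's construction (your variant $T_\alpha T_{f^k(\alpha)}^{-1}$ works equally well, and in fact needs only that the pair fills, not $i>2$). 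If you replace the ``transitivity'' claim with this choice of $\phi = f^k$, your proof becomes essentially the paper's.
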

\begin{proof}
By \cite{Luft78}, it is shown that the kernel of $A$ is generated by the Dehn twists about meridians. Note that $\cH_g$ contains a pseudo-Anosov element $f$. 
For any meridian curve $\alpha$, we can choose $k\in \NN$ so that $\alpha$ and $f^k(\alpha)$ fill $S_g$ with $i(\alpha,f^k(\alpha))>2$. 
Then, by the Thurson's consturction, we can see that $T_\alpha \circ T_{f^k(\alpha)}$ is a pseudo-Anosov element. 
Moreover, as $T_\alpha,T_{f^k(\alpha)}\in \ker(A)$, $T_\alpha \circ T_{f^k(\alpha)}\in \ker(A)$ and so $(T_\alpha \circ T_{f^k(\alpha)})^m\in \ker(A)$ for all $m\in\ZZ$. 
This shows that $\Spec(\id_{\Out(F_g)})$ is a non-empty unbounded subset of $\RR$.
\end{proof}

\begin{lem}\label{Lem:specInR}
Let $\varphi$ be an outer automorphism in $\Out(F_g)$, $g\geq 2$. Then, $\Spec(\varphi)$ is a closed and discrete subset of $\RR$.
\end{lem}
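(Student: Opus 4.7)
The plan is to realize $\Spec(\varphi)$, up to taking logarithms, as a subset of $\Spec(\cH_g)$, and then to invoke \refthm{discretSpecH} directly.

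First I would observe that the homomorphism $A$ has domain $\cH_g$, so every preimage $\psi \in A^{-1}(\varphi)$ automatically lies in $\cH_g \leq \Mod(\partial V_g)$. In particular, if such a $\psi$ is pseudo-Anosov as an element of $\Mod(\partial V_g)$, then $\psi$ is a pseudo-Anosov mapping class in $\cH_g$, and hence $\log \lambda_\psi \in \Spec(\cH_g)$. Applying $\log$ to every element of $\Spec(\varphi)$ therefore yields the inclusion
$$\log \Spec(\varphi) \;\subseteq\; \Spec(\cH_g).$$

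Next, by \refthm{discretSpecH} the set $\Spec(\cH_g)$ is closed and discrete in $\RR$, which is equivalent to saying that every bounded subset of $\Spec(\cH_g)$ is finite. Any subset of a set with this property still has it, so $\log \Spec(\varphi)$ is closed and discrete in $\RR$. Since $\Spec(\varphi) \subseteq [1, \infty)$ and $\exp \colon \RR \to (0, \infty)$ is a homeomorphism that maps $[0, \infty)$ onto $[1, \infty)$, the set $\Spec(\varphi) = \exp(\log \Spec(\varphi))$ inherits the closed-discrete property inside $(0, \infty)$. Because it lies in the closed subset $[1, \infty) \subseteq \RR$, it has no accumulation point at $0$ either, so $\Spec(\varphi)$ is closed and discrete in $\RR$ as required.

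There is no substantial obstacle here; the real content has been packaged into \refthm{discretSpecH}, and the present lemma merely transports that discreteness along the restriction of the stretch-factor map to the fiber $A^{-1}(\varphi)$. The only minor point to keep track of is the notational mismatch that $\Spec(\cH_g)$ is defined in terms of log stretch factors while $\Spec(\varphi)$ is defined in terms of the stretch factors themselves, which is handled by the elementary change-of-variables argument above.
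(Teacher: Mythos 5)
Your proposal is correct and takes essentially the same approach as the paper, which gives no explanation beyond ``It follows from Theorem~\ref{Thm:discretSpecH}.'' Your careful handling of the $\log$/$\exp$ change of variables (needed because $\Spec(\varphi)$ records stretch factors while $\Spec(\cH_g)$ records their logarithms) is exactly the step the paper leaves implicit.
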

\begin{proof}
It follows from \refthm{discretSpecH}.

\end{proof}

We show the following theorem to see that the stretch factor of a fully irreducible element  $\varphi$  in $\Out(F_g)$, $g>1$, provides a lower bound of $\Spec(\varphi)$.
\begin{lem} \label{Lem:LimitSqrtSum}
    For sequences $\{a_n\}, \{b_n\}$ of positive real numbers, if $$\lim_{n \to \infty} \sqrt[n]{a_n} = \lim_{n \to \infty} \sqrt[n]{b_n} = \lambda > 0,$$
    then we have $\lim_{n \to \infty} \sqrt[n]{a_n + b_n} = \lambda$.
\end{lem}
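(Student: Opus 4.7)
The plan is to bound $\sqrt[n]{a_n+b_n}$ from above and below by quantities that both converge to $\lambda$, and apply the squeeze theorem.

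For the lower bound, I would simply observe that since $b_n>0$, we have $a_n+b_n \geq a_n$, so
\[
\sqrt[n]{a_n+b_n} \;\geq\; \sqrt[n]{a_n} \;\longrightarrow\; \lambda.
\]
This already gives $\liminf_{n\to\infty} \sqrt[n]{a_n+b_n} \geq \lambda$.

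For the upper bound, I would use the trivial inequality $a_n+b_n \leq 2\max(a_n,b_n)$, which yields
\[
\sqrt[n]{a_n+b_n} \;\leq\; 2^{1/n}\max\bigl(\sqrt[n]{a_n},\sqrt[n]{b_n}\bigr).
\]
Since $2^{1/n}\to 1$ and both $\sqrt[n]{a_n}$ and $\sqrt[n]{b_n}$ tend to $\lambda$, the right-hand side converges to $\lambda$. Therefore $\limsup_{n\to\infty} \sqrt[n]{a_n+b_n} \leq \lambda$.

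Combining the two bounds gives $\lim_{n\to\infty}\sqrt[n]{a_n+b_n} = \lambda$. There is no real obstacle here; the only subtlety is that the hypothesis $\lambda>0$ is used implicitly to ensure that $\sqrt[n]{a_n}$ is eventually comparable to $\lambda$ (so the lower-bound step is nontrivial), but positivity of $a_n$ already makes both bounds meaningful without further case analysis.
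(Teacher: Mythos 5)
Your proof is correct and uses essentially the same idea as the paper — squeeze $\sqrt[n]{a_n+b_n}$ between quantities that converge to $\lambda$, with the upper bound coming from $a_n+b_n\leq 2\max(a_n,b_n)$ and the fact that $2^{1/n}\to 1$. Your version is somewhat more streamlined (using $a_n+b_n\geq a_n$ directly for the lower bound and phrasing things via $\liminf$/$\limsup$ rather than explicit $\epsilon$-$N$ bookkeeping), but the underlying mechanism is the same.
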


\begin{proof}
Choose $\epsilon$ with $0<\epsilon <\lambda$. Note that $\{\sqrt[n]{2}\}_{n=2}^\infty$ is a decreasing sequence converging to $1$. Hence, we can take a number $N_1>1$ in $\NN$ so that $$1<\sqrt[n]{2}<1+\frac{\epsilon}{2\lambda}$$ for all $n>N_1$. 
Then, as $\lim_{n \to \infty} \sqrt[n]{a_n} = \lim_{n \to \infty} \sqrt[n]{b_n}=\lambda$, there is a number $N_2$ in $\NN$  satisfying the following:
\begin{itemize}
    \item $N_2>N_1$,
    \item $\lambda-\epsilon/4 \leq \sqrt[n]{a_n} \leq\lambda+\epsilon/4$ for all $n>N_2$, and 
    \item $\lambda-\epsilon/4 \leq \sqrt[n]{b_n} \leq\lambda+\epsilon/4$ for all $n>N_2$.
\end{itemize}

Now, fix $n$ with $n>N_2$. We have that
 $$(\lambda-\epsilon/4)^n \leq a_n \leq(\lambda+\epsilon/4)^n\text{ and }(\lambda-\epsilon/4)^n \leq b_n \leq(\lambda+\epsilon/4)^n.$$
 Then, 
$$2(\lambda-\epsilon/4)^n \leq a_n+b_n\leq 2(\lambda+\epsilon/4)^n,$$ and so
$$\sqrt[n]{2}(\lambda-\epsilon/4) \leq \sqrt[n]{a_n+b_n}\leq \sqrt[n]{2}(\lambda+\epsilon/4).$$
Equivalently,
$$\sqrt[n]{2}(\lambda-\epsilon/4)-\lambda \leq \sqrt[n]{a_n+b_n}-\lambda\leq \sqrt[n]{2}(\lambda+\epsilon/4)-\lambda.$$

First, we consider the left side:
\[
\sqrt[n]{2}(\lambda-\epsilon/4)-\lambda = (\sqrt[n]{2}-1)\lambda-\frac{\sqrt[n]{2}}{4}\epsilon >-\frac{\sqrt[n]{2}}{4}\epsilon >-\epsilon
\]
as $1<\sqrt[n]{2}<2$.

Next,  we consider the right side:
\[
\sqrt[n]{2}(\lambda+\epsilon/4)-\lambda=(\sqrt[n]{2}-1)\lambda+\frac{\sqrt[n]{2}}{4}\epsilon<\frac{\epsilon}{2\lambda}\cdot \lambda+\frac{1}{2}\epsilon=\epsilon.
\]
as $\sqrt[n]{2}-1<\epsilon/2\lambda$ and $1<\sqrt[n]{2}<2$.
Therefore, for every $n>N_2$, $$|\sqrt[n]{a_n+b_n}-\lambda|<\epsilon.$$ Thus, $\lim_{n\to \infty }\sqrt[n]{a_n+b_n}=\lambda$.

\end{proof}

\begin{thm}\label{Thm:lowerBound}
For every pseudo-Anosov mapping class $\varphi \in \mathcal{H}_g$, if $A(\varphi)$ is fully irreducible, then we have $ \mu_{A(\varphi)} \leq \lambda_\varphi$. In particular,  for every fully irreducible element $\psi$ in $\Out(F_g)$, $\Spec(\psi)$ is bounded below by $\mu_\psi$.
\end{thm}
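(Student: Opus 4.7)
The plan is to bound the minimum word length $|\cO_{F_g}(A(\varphi)^n(w_0))|_S$ from above by a sum of geometric intersection numbers on $\partial V_g$, and then to exploit pseudo-Anosov dynamics on those intersection numbers.

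First, I will fix a system of pairwise disjoint essential disks $D_1,\dots,D_g$ in $V_g$ whose complement $V_g\setminus\bigsqcup D_i$ is a $3$-ball; their boundaries $m_1,\dots,m_g$ form a complete meridian system on $\partial V_g$. Dually, the disks determine a spine of $V_g$---a wedge of $g$ circles---yielding a free basis $S=\{s_1,\dots,s_g\}$ of $F_g=\pi_1(V_g)$. Choose an essential simple closed curve $\gamma$ on $\partial V_g$ that is not a meridian, so that its image $w_0 := p([\gamma]) \in F_g$ under the surjection $p\colon\pi_1(\partial V_g)\twoheadrightarrow F_g$ is non-trivial. Putting $\gamma$ in minimal position with $\bigsqcup m_i$ and recording the sequence of crossings produces a cyclic word in $S^{\pm 1}$ of length $\sum_{i=1}^{g} i(\gamma,m_i)$ representing the conjugacy class of $w_0$. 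Applying this construction to $\varphi^n(\gamma)$ and noting $A(\varphi)^n(w_0)=p([\varphi^n\gamma])$ yields
\[
|\cO_{F_g}(A(\varphi)^n(w_0))|_S \ \le\ \sum_{i=1}^{g} i(\varphi^n\gamma,m_i).
\]

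Second, for a pseudo-Anosov $\varphi$ with invariant measured foliations $(\cF^s,\mu_s),(\cF^u,\mu_u)$ and stretch factor $\lambda_\varphi$, the standard asymptotic $\lim_n \sqrt[n]{i(\varphi^n\alpha,\beta)} = \lambda_\varphi$ holds for any essential simple closed curves $\alpha,\beta$, because $i(\alpha,\mu_u)$ and $i(\beta,\mu_s)$ are strictly positive (the invariant laminations are filling). Applying this with $\alpha=\gamma$ and $\beta=m_i$ for each $i$, then using \reflem{LimitSqrtSum} $(g-1)$ times to combine the resulting sequences, I obtain
\[
\lim_{n\to\infty}\sqrt[n]{\textstyle\sum_{i=1}^{g}i(\varphi^n\gamma,m_i)} = \lambda_\varphi.
\]

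Combining the two displays yields $\limsup_n\sqrt[n]{|\cO_{F_g}(A(\varphi)^n(w_0))|_S}\le \lambda_\varphi$. Since $A(\varphi)$ is fully irreducible and $w_0\neq 1$, the orbit $\{A(\varphi)^n(w_0)\}$ consists of pairwise distinct conjugacy classes, and the characterization of the stretch factor recalled in \refsec{preliminary} identifies the limit as $\mu_{A(\varphi)} = \lim_n \sqrt[n]{|\cO_{F_g}(A(\varphi)^n(w_0))|_S}$. Together these give $\mu_{A(\varphi)}\le\lambda_\varphi$, and the ``in particular'' clause follows by applying the bound to any pseudo-Anosov preimage of a fully irreducible $\psi\in\Out(F_g)$. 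The step I expect to require the most care is the asymptotic $\lim_n \sqrt[n]{i(\varphi^n\gamma,m_i)} = \lambda_\varphi$: it follows from expressing $i(\varphi^n\alpha,\beta)$ in terms of the transverse measures $\mu_u,\mu_s$ and the scaling $\varphi_\ast\mu_u=\lambda_\varphi\mu_u$, $\varphi_\ast\mu_s=\lambda_\varphi^{-1}\mu_s$, but one must verify that the dominant coefficient $i(\gamma,\mu_u)\cdot i(m_i,\mu_s)$ does not vanish, which is ensured by the filling property of the invariant laminations. A secondary check is that $w_0$ is a legitimate test element for $\mu_{A(\varphi)}$; this amounts to noting that no non-trivial conjugacy class is periodic under a fully irreducible outer automorphism with $\mu>1$.
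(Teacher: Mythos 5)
Your proof is correct and follows essentially the same route as the paper's: fix a meridian system dual to a spine, bound the cyclic word length of $A(\varphi)^n(w_0)$ by $\sum_i i(\varphi^n\gamma,m_i)$, identify the growth rate of that sum as $\lambda_\varphi$ via the pseudo-Anosov intersection asymptotic (and \reflem{LimitSqrtSum}), and invoke the Bestvina--Handel characterization of $\mu_{A(\varphi)}$. The only cosmetic difference is that the paper phrases the word-length bound through a retraction $r\colon V_g\to R$ onto a rose and counts components of $(rf_n\hat\gamma)^{-1}(R\setminus\{v\})$, whereas you count crossings with $\bigsqcup m_i$ directly; these are the same computation.
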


\begin{proof}
Let $\delta_1, \dots, \delta_g$ be a cut system of $V_g$, and let $R$ be an embedded rose dual to this cut system.
Then there exists a retraction $r: V_g \to R$ such that $r(\delta_j)$ is a point for all $j$.
If $v$ is the vertex of $R$, then $r^{-1}(R \setminus \{v\}) \cap \Sigma_g$ is a disjoint union of open neighborhoods of $\delta_1, \dots, \delta_g$, that is, $$r^{-1}(R \setminus \{v\}) \cap \Sigma_g = \bigsqcup_{j = 1}^g N(\delta_j)$$ for some open neighborhoods $N(\delta_1), \dots, N(\delta_g) \subset \Sigma_g$ of $\delta_1, \dots, \delta_g$, respectively.

Choose an embedded loop $\hat\gamma: S^1 \to \Sigma_g$ that moves along a non-meridian simple closed curve $\gamma$.
Note that $A(\varphi)$ does not permute $r\hat\gamma$ periodically.
For each $n \geq 1$, let $f_n$ denote a self-homeomorphism representative to $\varphi^n$ such that the number of the components of $f_n(\gamma) \cap N(\delta_j)$ is minimal, in other words, $$\lvert \pi_0(f_n(\gamma) \cap N(\delta_j)) \rvert = \lvert f_n(\gamma) \cap \delta_j \rvert = i(\varphi^n([\gamma]), [\delta_j])$$ for all $j$.
By \reflem{LimitSqrtSum} and \cite[Theorem 14.24]{FarbMargalit12}, the stretch factor of $\varphi$ is given by $$\lambda_\varphi = \lim_{n \to \infty} \sqrt[n]{i(\varphi^n([\gamma]), [\delta_1])} = \lim_{n \to \infty} \sqrt[n]{\sum_{j = 1}^g \lvert f_n(\gamma) \cap \delta_j \rvert }.$$

Notice that the edge length of a loop of $R$ is equal to the word length of the word corresponding to the loop.
For each $n$, if $\rho_n$ is a shortest loop homotopic to $rf_n\hat\gamma$, then the edge length of $\rho_n$ is at most $\lvert \pi_0((rf_n\hat\gamma)^{-1}(R \setminus \{v\})) \rvert$, the number of times $rf_n\hat\gamma$ passes through the edges.
If $\|\rho_n\|_R$ denotes the edge length of $\rho_n$, then one deduces
\begin{align*}
    \|\rho_n\|_R &\leq \lvert \pi_0((rf_n\hat\gamma)^{-1}(R \setminus \{v\})) \rvert = \left\lvert \pi_0\left((f_n\hat\gamma)^{-1}\left( \bigsqcup_{j = 1}^g N(\delta_j) \right)\right) \right\rvert \\
    &= \sum_{j = 1}^g \lvert \pi_0((f_n\hat\gamma)^{-1}(N(\delta_j))) \rvert = \sum_{j = 1}^g \lvert \pi_0 (f_n(\gamma) \cap N(\delta_j)) \rvert \\
    &= \sum_{j = 1}^g \lvert f_n(\gamma) \cap \delta_j \rvert.
\end{align*}

Following Bestvina--Handel \cite[Remark 1.8]{BestvinaHandel92}, because $A(\varphi)$ is irreducible, we have $\mu_{A(\varphi)} = \lim_{n \to \infty} \sqrt[n]{\|\rho_n\|_R}$.
Therefore, we get that  $$\mu_{A(\varphi)} = \lim_{n \to \infty} \sqrt[n]{\|\rho_n\|_R} \leq \lim_{n \to \infty} \sqrt[n]{\sum_{j = 1}^g \lvert f_n(\gamma) \cap \delta_j \rvert} = \lambda_\varphi.$$
Thus, the statement holds.
\end{proof}

\section{The outer automorphism group of the free group of rank two}\label{Sec:OutF2}
In this section, we briefly discuss some facts  without proof and introduce some conventions related to $\Out(F_2)$, which are used later. 

\subsection{$\Out(F_2)$ is $\GL(\ZZ)$} \label{Sec:OutToGL}
Let $F_2$ be the free group of rank two. We denote the homomorphism from $F_2$ to the abelianization $F_2^{ab}\cong \ZZ^2$ by $\theta^{ab}$. Then, for each automorphism $\psi\in \Aut(F_2)$, there is a unique automorphism $\Theta^{ab}(\psi)$ in $\Aut(\ZZ^2)$ such that $\Theta^{ab}(\psi) \circ \theta^{ab}=\theta^{ab} \circ \psi$. Then, $\Theta^{ab}$ is an epimorphism from $\Aut(F_2)$ to $\Aut(\ZZ^2)$ with $\ker(\Theta^{ab})=\Inn(F_2)$. Hence, there is the isomorphism $\tilde{\Theta}^{ab}$ from $\Out(F_2)$ to $\Aut(\ZZ^2)\cong \Out(\ZZ^2) \cong \GL(\ZZ)$.
This is a well known Nielsen's result. One can also find an alternative explanation for the isomorphism $\tilde{\Theta}^{ab}$ in \cite[Section~5.3]{BestvinaBuxMargalit07}.

Now, set $\{x_1,x_2\}$ to be the free generating set of $F_2$. We may assume that $\theta^{ab}(x_1)=(1,0)$ and $\theta^{ab}(x_2)=(0,1)$. We define the autormorphisms $\sigma_{12}$, $\sigma_{21}$, $\epsilon_1$, and  $\epsilon_2$ as follows:
$$\sigma_{ij}(x_j)=x_j \text{ and } \sigma_{ij}(x_i)=x_ix_j$$
and 
$$\epsilon_i(x_j)=x_j \text{ and } \epsilon_i(x_i)=x_i^{-1}$$
where $i\neq j$. Then, $\{\sigma_{12},\sigma_{21},\epsilon_1,\epsilon_2\}$ is a generating set of $\Out(F_2)$. 
Also, we obtain that  
$$\Theta^{ab}(\sigma_{12})=\begin{pmatrix}1&1\\0&1\end{pmatrix} \text{ and } \Theta^{ab}(\sigma_{21})=\begin{pmatrix}1&0\\1&1\end{pmatrix} $$
and 
$$\Theta^{ab}(\epsilon_1)=\begin{pmatrix}-1&0\\0&1\end{pmatrix} \text{ and }\Theta^{ab}(\epsilon_2)=\begin{pmatrix}1&0\\0&-1\end{pmatrix}.$$

\subsection{$\Out(F_2)$ is $\Mod^{\pm}(S_{1,1})$}\label{Sec:OutToMod}
$F_2^{ab}=\ZZ^2$ acts on $\RR^2$ by translations in the standard way. Then, $F_2^{ab}$ can be considered as the fundamental group of the torus $S_1=\RR^2/\ZZ^2$. Also, we may think of $F_2$ as the fundamental group of once punctured torus $S_{1,1}=(\RR^2\setminus (1/2,1/2)+\ZZ^2)/\ZZ^2$. Then, $\theta^{ab}$ is the homomorphism induced from the inclusion map $i:S_{1,1}\to S_1$. 
Note that $i$ induces the isomorphism $i_\#$ from $H_1(S_{1,1};\ZZ)$ to $H_1(S_1;\ZZ)$. Then, via the isomorphism $i_\#$, we can see that 
$$\Mod^{\pm}(S_{1,1})=\Mod^{\pm}(S_1)=\GL(\ZZ)=\Out(F_2).$$

Note that each primitive element $(q,p)$ in $H_1(S_{1,1};\ZZ)$ or $H_1(S_1;\ZZ)$ is uniquely associated with an essential simple closed curve $\alpha_{p/q}$ in $S_{1,1}$ or $S_1$, respectively, a full lift of which is the line with slop $p/q$  in $\RR^2\setminus (1/2,1/2)+\ZZ^2$ or $\RR^2$, respectively, where $1/0=\infty$. In this reason, the set of essential simple closed curves in $S_{1,1}$ or $S_1$ is identified with the rational point $[q:p]$ in the real projective line $\RP^1$.

\subsection{$\GL(\ZZ)$ in $\Isom(\HH^2)$}

Now, we consider the action of $\GL(\ZZ)$ on $\RP^1$. By identification, the action of $\begin{pmatrix}a&c\\b&d\end{pmatrix}
\in \GL(\ZZ)$ on $\RP^1$ is defined as 
$$\begin{pmatrix}a&c\\b&d\end{pmatrix}\cdot[x:y]=[ax+cy:bx+dy].$$ Then, we consider the hyperbolic plane $\HH^2$ as the upper half plane in the Riemann sphere $\hat{\CC}=\CC \cup \{\infty\}$ and we think of $\hat{\RR}=\RR \cup \{\infty \}$ as the boundary of $\HH^2$. Now, we identify $\hat {\CC}$ with $\CP^1$ via the map $z\to [1:z]$ where $1/0=\infty$. Under this identification, $\RP^1$ is identified with $\hat{\RR}$ and each simple closed curve $\alpha_{p/q}$ is corresponded to the point $p/q$ in $\hat{\QQ}=\QQ\cup\{\infty\}$. Then, the action of $\begin{pmatrix}a&c\\b&d\end{pmatrix}$ on $\RP^1$ can be written as 
\[\begin{pmatrix}a&c\\b&d\end{pmatrix}\cdot z=\frac{b+dz}{a+cz}
\]
for $z\in \hat{\RR}$. However, the action of $\GL(\ZZ)$ on $\hat{\RR}$ is not faithful. As the kernel of the action is $\{I,-I\}$, $\PGL{\ZZ}$ faithfully acts on $\hat{\RR}$. Moreover, this action can be extended on $\HH^2$ as follows:
if $ad-bc=1$, then
$\begin{pmatrix}a&c\\b&d\end{pmatrix}$ acts on $\HH^2$ as  the linear fractional transformation \[\frac{b+dz}{a+cz}.\] Otherwise, $ad-bc=-1$ and so $\begin{pmatrix}a&c\\b&d\end{pmatrix}$ acts on $\HH^2$ as \[\frac{b+d\bar{z}}{a+c\bar{z}}.\]
Therefore, we can see that $\PGL{\ZZ}$ considered as a subgroup of the isometry group $\Isom(\HH^2)$ of the hyperbolic plane, including the orientation reversing maps. In fact, $\Isom(\HH^2)=\PGL{\RR}$. In this perspective, $\Theta^{ab}(\sigma_{12})$ is corresponded to the parabolic isometry $z\mapsto z/(1+z)$ fixing $0$. Likewise, $\Theta^{ab}(\sigma_{21})$ is corresponded to the parabolic isometry $z\mapsto z+1$ fixing $\infty$. 

\subsection{The classification of isometries}
Let $\varphi$ be an isometry of $\HH^2$. If $\varphi$ is orientation preserving, then $\varphi$ falls into three cases: elliptic, parabolic, hyperbolic. If $\varphi$ is orientation reversing, then $\varphi$ falls into two cases: a reflection, a glide-reflection. If $\varphi$ is a reflection, then $\varphi$ is conjugated to the isometry $z\mapsto -\bar z$ and so $\Fix{\varphi}$ is a geodesic which is the reflection axis. If $\varphi$ is a glide-reflection, then $\varphi$ is conjugated to the isometry $g:z\mapsto -e^\alpha \bar z$ for some positive real number $\alpha \in \RR$. Hence, there is a unique invariant geodesic on which $\varphi$ acts as a translation with translation length $\alpha$. Note that $g$ is associated with $A_g=\begin{pmatrix}
e^{-\alpha/2}&0\\0&-e^{\alpha/2}
\end{pmatrix}$. Hence, if $A_\varphi$ is the element in $\PGL{\RR}$ associated with $\varphi$, then $\alpha$  equals to $2| \log|\lambda||$ where $\lambda$ is an eigenvalues of $A_\varphi$. See \cite{Scott83} for the more detailed exposition.

\subsection{The curve complex of $S_{1,1}$}

The curve graph $\cC(S_{1,1})$ of $S_{1,1}$ is a graph whose vertex set is the essential simple closed curves $\hat \QQ$ of $S_{1,1}$ and such that there is an edge between two vertices $b/a$ and $d/c$ if and only if $|ad-bc|=1$. Under the previous identification, the curve graph $\cC(S_{1,1})$ of $S_{1,1}$ can be identified with the \emph{Farey graph} $\cF$ in $\closure{\HH^2}$. To see this, let $\ell$ be the geodesic connecting $0=0/1$ and $\infty=1/0$. Then, for each $M=\begin{pmatrix}
 a&c\\b&d   
\end{pmatrix}\in \PGL{\ZZ}$, $M$ maps $\ell$ to the geodesic connecting $b/a=M\cdot 0/1$ and $d/c=M\cdot 1/0$. As $|ad-bc|=1$, we can think of $\cF$ as the embedding of $\cC(S_{1,1})$ into $\closure{\HH^2}$. 

\subsection{Standard forms and the syllable length}\label{Sec:stdForm}
 
Let $M$ be an element in $\GL(\ZZ)$. If $M$ is a hyperbolic isometry as an element in $\PGL{\ZZ}$, then there is a invariant axis $\ell_M$ whose end points in $\hat{\RR}$ are  the slopes of two eigenvectors of $M$. Now, we orient $\ell_M$ so that $M$ acts on $\ell_M$ as a positive translation. Note that the Farey graph $\cF$ gives a ideal triangulation of $\HH^2$, so-called the \emph{Farey triangulation}. 
Also, we call each ideal triangle in the Farey triangulation a \emph{Farey triangle}.  
Then, the collection of Farey triangles intersecting $\ell_M$ gives rise to a bi-infinite sequence $\{f_i\}_{i\in \ZZ}$ of consecutive Farey triangles such that $f_i$ and $f_{i+1}$ share only one geodesic edge and there is a positive integer $w(M)$ in $\NN$ such that $M(f_i)=f_{i+w(M)}$ for all $i\in \ZZ$. We denote the sequence $\{f_i\}_{i\in \ZZ}$ by $\Sigma_M$ and we called $\Sigma_M$ the \emph{ladder} of $M$. See \cite{FloydHatcher82},\cite{ShinStrenner15}, and \cite{BaikKimKwakShin21} for the detailed description of ladders. For each $f_i$ in $\Sigma_M$, we call the edges of $f_i$ intersecting $\ell_M$ the \emph{cut sides} of $f_i$. We define the \emph{cutting sequence} $\{U_i\}_{i\in\ZZ}$ of $\Sigma_M$ that is a bi-infinite sequence of letters $L$ and $R$, that is, $U_i\in \{L,R\}$, and is constructed as follows. If two cut sides of $f_i$ share a vertex in the left side of $\ell_M$, then $U_i=L$. If two cut sides of $f_i$ share a vertex in the right side of $\ell_M$, then $U_i=R$. As $M(f_i)=f_{i+w(M)}$ for all $i\in \ZZ$, the cutting sequence $\{U_i\}_{i\in\ZZ}$ is also periodic, that is, $U_i=U_{i+w(M)}$ for all $i \in \ZZ$. Hence, there is $i_0$ in $\ZZ$ and a finite sequence $\{a_j\}_{j=1}^d$ of positive integers such that these satisfy the following:
\begin{enumerate}
    \item $b_d=w(M)$
    \item $U_{i_0}=R$ and $U_{i_0-b_d-1}=L$,
    \item for each $j\in\{0,1,2,\cdots,d-1\}$, $U_{i_0+b_j}=U_{i_0+b_j+n}$ for all $0\leq n < a_{j+1}$, and 
    \item $\{U_{i_0+b_j-1},U_{i_0+b_j}\}=\{L,R\}$ for all $j\in \{1,2,\cdots,d-1\}$.
\end{enumerate}
where $a_0=0$ and $b_j=\sum_{k=0}^{j}a_k$ for $j\in \{0,1,2,\cdots,d\}$. 
Note that $d$ is even. We call the finite sequence $\{a_j\}_{j}^d$ the \emph{type} of the ladder $\Sigma_M$ and $i_0$ a \emph{base index} for the type. Note that the type of a ladder is well defined uniquely up to cyclic permutations.
This implies that we can find an element $S(M)$ in $\cO_{\GL(\ZZ)}(M)$ in the following form:
$$S(M)=\pm \begin{pmatrix}
1&1\\0&1    
\end{pmatrix}^{a_1}\begin{pmatrix}
1&0\\1&1    
\end{pmatrix}^{a_2}\cdots \begin{pmatrix}
1&1\\0&1    
\end{pmatrix}^{a_{d-1}}\begin{pmatrix}
1&0\\1&1    
\end{pmatrix}^{a_d}.$$
Here, the sign of $S(M)$ is chosen to be the sign of the trace of $M$. We call $S(M)$ the \emph{standard form} of $M$. The standard form is well defined uniquely up to conjugation. Also, we call $d$ be the \emph{stable  syllable length} of $M$ and denote it by $d(M)$.

When $M$ is a glide-reflection as an element in  $\PGL{\ZZ}$, there is a unique invariant geodesic $\ell_M$. We can orient $\ell_M$ so that $M$ acts on $\ell_M$ as a positive translation. Then, we can define a \emph{ladder}, the type of the ladder, and the standard form of $M$ in a similar way. To see this, consider the hyperbolic isometry $M^2$ whose invariant axis is $\ell_M$. Also, $M^2$ acts on $\ell_M$ as a positive translation. Then, there is the ladder $\Sigma_{M^2}=\{f_i\}_{i\in\ZZ}$, the cutting sequence $\{U_i\}_{i\in \ZZ}$, and the type $\{a_j\}_{j=1}^{d(M^2)}$ of the ladder with base index $i_0$ for $M^2$. Observe that $M(f_{i_0})=f_{i_0+b_M}$, $U_{i_0}=R$ and $U_{i_0+b_M}=L$ where $b_M=\sum_{j=1}^{d(M^2)/2}a_j$. Therefore, $a_j=a_{j+d(M^2)/2}$ for all $j\in\{1,2,\cdots, d(M^2)/2\}$ and $d(M^2)/2$ is odd. Moreover, we can take an element $S(M)$ in $\cO_{\GL(\ZZ)}(M)$ in the following form:
$$S(M)=\pm \begin{pmatrix}
1&1\\0&1    
\end{pmatrix}^{a_1}\begin{pmatrix}
1&0\\1&1    
\end{pmatrix}^{a_2}\cdots \begin{pmatrix}
1&0\\1&1    
\end{pmatrix}^{a_{d(M^2)/2-1}}\begin{pmatrix}
1&1\\0&1    
\end{pmatrix}^{a_{d(M^2)/2}}\begin{pmatrix}
0&1\\1&0    
\end{pmatrix}.$$
Note that the last factor $\begin{pmatrix}
0&1\\1&0    
\end{pmatrix}$ comes from the fact that $U_{i_0}=R$ and $U_{i_0+b_M}=L$ and that $$
\begin{pmatrix}
0&1\\1&0    
\end{pmatrix}
\begin{pmatrix}
1&1\\0&1    
\end{pmatrix}
\begin{pmatrix}
0&1\\1&0    
\end{pmatrix}=\begin{pmatrix}
1&0\\1&1    
\end{pmatrix}.$$
Here, the sign of $S(M)$ is chosen to be the sign of the trace of $M$.
Therefore, $S(M^2)=\pm S(M)^2$. We define the \emph{standard form} of $M$ to be $S(M)$.
Similarly, we define the \emph{ladder} $\Sigma_M$ of $M$ to be $\Sigma_{M^2}$ and the cutting sequence for $\Sigma_M$ to be $\{U_i\}_{i\in \ZZ}$. The \emph{type} of $\Sigma_M$ is defined as the finite sequence $\{a_j\}_{j=1}^{d(M^2)/2}$ of positive integers and the \emph{stable syllable length} $d(M)$ of $M$ is $d(M^2)/2$.

\section{On traces}\label{Sec:trace}

In this section, we investigate some properties of standard forms to estimate the stretch factor of an fully irreducible element in $\Out(F_2)$. In particular, the comparison of the trace and the anti-trace of a given matrix plays a crucial role in the proof of the main theorem, \refthm{almostMinimal}.  
\subsection{Traces and stretch factors}\label{Sec:trAndStr}

Let $M$ be an element in $\GL(\ZZ)$. Assume that $M$ is a hyperbolic isometry or glide reflection as an element in $\PGL{\ZZ}$. Then, $M$ has two distinct real eigenvalues $\lambda_1$ and $\lambda_2$ and two eigenvectors $v_1=(x_1,x_2)$ and $v_2=(y_1,y_2)$, respectively. Note that $\tr(M)=\lambda_1+\lambda_2$ and $\det(M)=\lambda_1\lambda_2$. Hence, if $M$ is a hyperbolic isometry, then $\det(M)=\lambda_1\lambda_2=1$ and so $\lambda_2=\lambda_1^{-1}$. Therefore, $\lambda_1$ and $\lambda_2$ have the same sign, and  $\tr(M)=\lambda_1+\lambda_2=\lambda_1+\lambda_1^{-1}$. When $M$ is a glide-reflection,  $\det(M)=\lambda_1\lambda_2=-1$ and so $\lambda_2=-\lambda_1^{-1}$. Hence, the signs of $\lambda_1$ and $\lambda_2$ are different, and $\tr(M)=\lambda_1+\lambda_2=\lambda_1-\lambda_1^{-1}$.
Therefore, the parallel lines with the same slope of $v_1$ on $\RR^2\setminus \ZZ^2$ give rise to a  foliation $\tilde{\sF_1}$ preserved by $M$. Then, there is a standard transverse measure $\textbf{m}_1$ in  $\tilde{\sF_1}$ induced from $|y_1dx+y_2dy|$ and so $M\cdot(\tilde{\sF_1},\textbf{m}_1)=(\tilde{\sF_1},|\lambda_2|\textbf{m}_1)$. Likewise, the parallel lines with the same slope of $v_1$ on $\RR^2\setminus \ZZ^2$ give rise to a foliation $\tilde{\sF_2}$ preserved by $M$. Also, there is a standard transverse measure $\textbf{m}_2$ induced from the one form $|x_1dx+x_2dy|$ and so $M\cdot (\tilde{\sF_2},\textbf{m}_2)=(\tilde{\sF_2}, |\lambda_1|\textbf{m}_2)$. Therefore,  $(\tilde{\sF_1}, \textbf{m}_1)$ and  $(\tilde{\sF_2}, \textbf{m}_2)$ descend to $S_{1,1}=(\RR^2\setminus \ZZ^2)/\ZZ^2$ and give rise to two transverse measured foliations $(\sF_1, \textbf{m}_1)$ and  $(\sF_2,\textbf{m}_2)$ on $S_{1,1}$. Then, $M\cdot (\sF_1,\textbf{m}_1)=(\sF_1,|\lambda_2|\textbf{m}_1)$ and $M\cdot (\sF_2,\textbf{m}_2)=(\sF_2,|\lambda_1|\textbf{m}_2)$. Thus, $ \max \{|\lambda_1|, |\lambda_2|\}>1$ is the stretch factor of $M$ as a pseudo-Anosov element in $\Mod^{\pm}(S_{1,1})$.

\subsection{The trace formula}
We first intend to describe the trace of the form $$W_x(m_1,m_2,\cdots, m_{2d})=\prod_{i=1}^d A_x^{m_{2i-1}}B_x^{m_{2i}}$$ for $$A_x=\begin{pmatrix} 1 & x \\ 0 & 1 \end{pmatrix} \text{ and } B_x=\begin{pmatrix} 1 & 0 \\ x & 1 \end{pmatrix} \in \SL(\ZZ[x])$$ and for any $d \in \NN$ and $m_1, \dots, m_{2d} \in \mathbb{Z}$. For each $k \in \{ 2, 4, \dots, 2d \}$, write $$J^{k}_d := \{ (j_1, \dots, j_{k}) \mid 1 \leq j_1 < \dots < j_{k} \leq 2d ~\text{and}~ j_{\ell+1} - j_\ell ~\text{is}~\text{odd} ~\forall \ell \}$$ and $$c_d(k) =\sum_{(j_1, \dots, j_{k}) \in J^{k}_d}m_{j_1} \dots m_{j_{k}}.$$

\begin{lem}\label{Lem:trForm}
For every $d\in \NN$ and $m_1, \dots, m_{2d} \in \ZZ$, we have
$$\tr(W_x(m_1, \dots, m_{2d})) = 2 + \sum_{i = 1}^d c_{d}(2i) x^{2i}$$
\end{lem}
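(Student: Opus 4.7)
The plan is to expand $W_x(m_1,\ldots,m_{2d})$ as a sum over subsets by exploiting the fact that both $A_x$ and $B_x$ are shifted identities. Set
$$e_1 := \begin{pmatrix} 0 & 1 \\ 0 & 0 \end{pmatrix}, \quad e_2 := \begin{pmatrix} 0 & 0 \\ 1 & 0 \end{pmatrix},$$
and observe that $A_x^m = I + mx\,e_1$ and $B_x^m = I + mx\,e_2$ for every $m \in \ZZ$. Declaring $N_j = e_1$ when $j$ is odd and $N_j = e_2$ when $j$ is even, one may write
$$W_x(m_1,\ldots,m_{2d}) = \prod_{j=1}^{2d}\bigl(I + m_j x\, N_j\bigr),$$
whose distributive expansion is a sum over subsets $S = \{j_1 < \cdots < j_k\} \subseteq \{1,\ldots,2d\}$ of the terms
$$x^k \Bigl(\prod_{\ell=1}^k m_{j_\ell}\Bigr) N_{j_1} N_{j_2} \cdots N_{j_k}.$$

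Next I would isolate the summands with nonzero trace. Since $e_1^2 = e_2^2 = 0$, the product $N_{j_1}\cdots N_{j_k}$ vanishes unless consecutive factors alternate between $e_1$ and $e_2$; equivalently, $j_\ell$ and $j_{\ell+1}$ must have opposite parity, which is exactly the condition $j_{\ell+1} - j_\ell$ odd defining $J^k_d$. A direct computation yields
$$e_1 e_2 = \begin{pmatrix} 1 & 0 \\ 0 & 0 \end{pmatrix}, \quad e_2 e_1 = \begin{pmatrix} 0 & 0 \\ 0 & 1 \end{pmatrix},$$
both rank-one idempotents of trace $1$. Hence for surviving $S$ of even size $k = 2i$, the matrix product reduces (depending on the parity of $j_1$) to $(e_1 e_2)^i$ or $(e_2 e_1)^i$, each of trace $1$; for odd $k$ it reduces instead to $e_1$ or $e_2$, both of trace $0$; and the empty subset contributes $\tr(I) = 2$.

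Summing the surviving contributions gives
$$\tr\bigl(W_x(m_1,\ldots,m_{2d})\bigr) = 2 + \sum_{i=1}^{d}\, \sum_{(j_1,\ldots,j_{2i}) \in J^{2i}_d} m_{j_1}\cdots m_{j_{2i}}\, x^{2i} = 2 + \sum_{i=1}^{d} c_d(2i)\, x^{2i},$$
which is the asserted identity. There is no deep obstacle; the only step requiring care is verifying that the alternation condition $j_{\ell+1} - j_\ell$ odd is equivalent to nonvanishing of $N_{j_1}\cdots N_{j_k}$, and that every surviving even-length product has trace $1$ uniformly, regardless of whether the alternation begins with $e_1$ or with $e_2$. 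Both follow at once from the idempotence and trace-$1$ properties of $e_1 e_2$ and $e_2 e_1$ computed above.
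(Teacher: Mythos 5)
Your proof is correct, and it takes a genuinely different and cleaner route than the paper's. The paper proves the lemma by a somewhat lengthy induction on $d$, maintaining explicit closed forms for all four entries of $W_x$ via the refined index sets $J^k_{d,o}$ and $J^k_{d,e}$ (splitting $J^k_d$ according to the parity of $j_1$), and then reads off the trace at the end. You instead exploit the nilpotent decomposition $A_x^m = I + mx\,e_1$, $B_x^m = I + mx\,e_2$, expand $\prod_j (I + m_j x\,N_j)$ distributively over subsets, and observe that (i) a monomial $N_{j_1}\cdots N_{j_k}$ survives the nilpotency $e_1^2=e_2^2=0$ precisely when consecutive indices have opposite parity, which is exactly the condition $j_{\ell+1}-j_\ell$ odd defining $J^k_d$, and (ii) every surviving even-length product equals the idempotent $e_1e_2$ or $e_2e_1$, each of trace $1$, while odd-length products reduce to $e_1$ or $e_2$, each of trace $0$. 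This yields the formula in one stroke without induction and without tracking the individual entries. The trade-off is that the paper's inductive proof incidentally supplies formulas for all four matrix entries (not just the trace), but since only the trace formula is used downstream, your argument is a net simplification with no loss.
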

\begin{proof}
For $d = 1$, calculating $W_x$, we have
\begin{align*}
    W_x &= A_x^{m_1} B_x^{m_2} = \begin{pmatrix}1 & m_1x \\ 0 & 1\end{pmatrix} \begin{pmatrix}1 & 0 \\ m_2x & 1\end{pmatrix} = \begin{pmatrix} 1 + m_1m_2x^2 & m_1x \\ m_2x & 1 \end{pmatrix}.
\end{align*}
So the trace of $W_x$ is $2 + m_1m_2x^2$, which satisfies the statement for $d=1$.

We claim $W_x$ can be written as $$W_x = \begin{pmatrix}
    1 + \sum_{i = 1}^d c_{d, o}(2i) x^{2i} & \sum_{i = 1}^d c_{d, o}(2i-1) x^{2i-1} \\
    \sum_{i = 1}^d c_{d, e}(2i-1) x^{2i-1} & 1 + \sum_{i = 1}^{d} c_{d, e}(2i) x^{2i}
\end{pmatrix}$$
where for each $k$,
    $$c_{d, o}(k) = \sum_{(j_1, \dots, j_{k}) \in J_{d, o}^{k}} m_{j_1} \dots m_{j_{k}} \quad \text{and} \quad c_{d, e}(k) = \sum_{(j_1, \dots, j_{k}) \in J_{d, e}^{k}} m_{j_1} \dots m_{j_{k}}$$
and
    $$J_{d, o}^{k} = \{ (j_1, \dots, j_{k}) \in J^{k}_d \mid j_1 ~\text{is}~\text{odd} \} ~\text{and}~
    J_{d, e}^{k} = \{ (j_1, \dots, j_{k}) \in J^{k}_d \mid j_1 ~\text{is}~\text{even} \}.$$
Define $J_{d,o}^k$ (\textit{resp.,} $J_{d, e}^k$) as an empty set for all $k \leq 0$ or $k > 2d$ (\textit{resp.,} $k \leq 0$ or $k \geq 2d$).

To prove the claim, we are going to use an induction on $d$.
The initial step is already shown above.
For the induction step, consider $W_x' = W_x A_x^{m_{2d+1}} B_x^{m_{2d+2}}$ for some nonzero integers $m_{2d+1}, m_{2d+2}$.
For all $1 \leq i \leq d+1$, we observe the following list of decomposition:
\begin{enumerate}
    \item $J_{d+1, o}^{2i} = J_{d, o}^{2i} \sqcup ( J_{d, o}^{2i-1} \times \{ 2d+2 \} ) \sqcup ( J_{d, o}^{2i-2} \times \{ (2d+1, 2d+2) \} )$;
    \item $J_{d+1, o}^{2i-1} = J_{d, o}^{2i-1} \sqcup ( J_{d, o}^{2i-2} \times \{ 2d+1 \} )$;
    \item $J_{d+1, e}^{2i} = J_{d, e}^{2i} \sqcup ( J_{d, e}^{2i - 1} \times \{ 2d+1 \} )$;
    \item $J_{d+1, e}^{2i - 1} = J_{d, e}^{2i - 1} \sqcup ( J_{d, e}^{2i - 2} \times \{ 2d+2 \} ) \sqcup ( J_{d, e}^{2i-3} \times \{ (2d+1, 2d+2) \} )$.
\end{enumerate}
For each $1 \leq i \leq d+1$, applying the above to $c_{d, o}$'s and $c_{d, e}$'s, we get
\begin{enumerate}
    \item $c_{d+1, o}(2i) = c_{d, o}(2i) + c_{d, o}'(2i-1) m_{2d+2} + c_{d, o}'(2i-2) m_{2d+1} m_{2d+2}$,
    \item $c_{d+1, o}(2i-1) = c_{d, o}(2i-1) + c_{d, o}'(2i-2) m_{2d+1}$,
    \item $c_{d+1, e}(2i) = c_{d, e}(2i) + c_{d, e}'(2i-1) m_{2d+1}$,
    \item $c_{d+1, e}(2i-1) = c_{d, e}(2i-1) + c_{d, e}'(2i-2) m_{2d+2} + c_{d, e}'(2i-3) m_{2d+1} m_{2d+2}$
\end{enumerate}
where $c_{d, *}'(k) = \begin{cases} c_{d, *}(k) & \text{for} ~ 1 \leq k \leq 2d, \\ 1 & \text{for}~ k = 0, \\ 0, & \text{otherwise.} \end{cases}$

Note $W_x'$ is expressed as
\begin{align*}
    W_x' &= W_x A_x^{m_{2d+1}} B_x^{m_{2d+2}} \\
    &= \left(\begin{smallmatrix}
        1 + \sum_{i = 1}^d c_{d,o}(2i) x^{2i} & \sum_{i = 1}^d c_{d,o}(2i-1) x^{2i-1} \\
        \sum_{i = 1}^d c_{d,e}(2i-1) x^{2i-1} & 1 + \sum_{i = 1}^{d} c_{d,e}(2i) x^{2i}
    \end{smallmatrix}\right)
    \left(\begin{matrix}
        1 + m_{2d+1} m_{2d+2} x^2 & m_{2d+1} x \\
        m_{2d+2} x & 1
    \end{matrix}\right).
\end{align*}
Let us now compute each entry of $W_x'$.
The $(1,1)$-entry of $W_x'$ is
\begin{align*}
    &\left(1 + \sum_{i = 1}^d c_{d,o}(2i) x^{2i}\right)(1 + m_{2d+1} m_{2d+2} x^2) + \sum_{i = 1}^d c_{d,o}(2i-1) m_{2d+2} x^{2i} \\
    = & 1 + ( c_{d,o}(2) + c_{d,o}(1) m_{2d+2} + m_{2d+1} m_{2d+2} ) x^2 \\
    & \quad + \sum_{i=2}^d (c_{d,o}(2i) + c_{d,o}(2i-1) m_{2d+2} + c_{d,o}(2i-2) m_{2d+1} m_{2d+2} ) x^{2i} \\
    & \quad + c_{d,o}(2d) m_{2d+1} m_{2d+2} x^{2d+2} \\
    = & 1 + c_{d+1, o}(2) x^2 + \left( \sum_{i=2}^d c_{d+1, o}(2i) x^{2i} \right) + c_{d+1, o}(2d+2s) x^{2d+2} \\
    = & 1 + \sum_{i=1}^{d+1} c_{d+1, o}(2i) x^{2i}.
\end{align*}
The $(1, 2)$-entry of $W_x'$ is
\begin{align*}
    &\left( 1 + \sum_{i = 1}^d c_{d,o}(2i) x^{2i} \right) m_{2d+1} x + \sum_{i = 1}^d c_{d,o}(2i-1) x^{2i-1} \\
    = & ( c_{d, o}(1) + m_{2d+1} ) x + \sum_{i = 2}^d ( c_{d,o}(2i-1) + c_{d,o}(2i-2) m_{2d+1} ) x^{2i-1} \\
    & \quad + c_{d,o}(2d) m_{2d+1} x^{2d+1} \\
    = & c_{d+1, o}(1) x + \left( \sum_{i=2}^d c_{d+1, o}(2i-1) x^{2i-1} \right) + c_{d+1, o}(2d+1) x^{2d+1} \\
    = & \sum_{i = 1}^{d+1} c_{d+1, o}(2i-1) x^{2i-1}.
\end{align*}
The $(2, 1)$-entry of $W_x'$ is
\begin{align*}
    & \left( \sum_{i = 1}^d c_{d,e}(2i-1) x^{2i-1} \right) ( 1 + m_{2d+1} m_{2d+2} x^2 ) + \left( 1 + \sum_{i = 1}^{d} c_{d,e}(2i) x^{2i} \right) m_{2d+2} x \\
    = & ( c_{d,e}(1) + m_{2d+2} ) x \\
    & \quad + \left( \sum_{i = 2}^d ( c_{d, e}(2i-1) + c_{d, e}(2i-2) m_{2d+2} + c_{d, e}(2i-3) m_{2d+1} m_{2d+2} ) x^{2i-1} \right) \\
    & \quad + ( m_{2d+2} + c_{d,e}(2d-1) m_{2d+1} m_{2d+2} ) x^{2d+1} \\
    = & c_{d+1, e}(1) x + \left( \sum_{i=2}^d c_{d+1, e}(2i-1) x^{2i-1} \right) + c_{d+1, e}(2d+1) x^{2d+1} \\
    = & \sum_{i=1}^{d+1} c_{d+1, e}(2i-1) x^{2i-1}.
\end{align*}
Since $c_{d, e}(2d) = 0 = c_{d+1, e}(2d+2)$, the $(2, 2)$-entry of $W_x'$ is
\begin{align*}
    & \left( \sum_{i = 1}^d c_{d,e}(2i-1) m_{2d+1} x^{2i} \right) + 1 + \sum_{i = 1}^{d} c_{d,e}(2i) x^{2i} \\
    = & 1 + \left( \sum_{i = 1}^{d} ( c_{d, e}(2i) + c_{d, e}(2i-1) m_{2d+1} ) x^{2i} \right) + c_{d+1, e}(2d+2) x^{2d+2} \\
    = & 1 + \sum_{i=1}^{d+1}c_{d+1,e}(2i) x^{2i}
\end{align*}
So the claim holds for all $d \geq 1$.

For each $k$, the index set $J_d^k$ can be decomposed into $J_{d,o}^k$ and $J_{d, e}^k$.
So the equation $$c_{d,o}(k) + c_{d,e}(k) = \sum_{(j_1,\dots,j_k)\in J_d^k}m_{j_1} \dots m_{j_k} = c_d(k)$$ holds for each $k$.
Hence, the trace of $W_x$ is $2 + \sum_i c_d(2i)x^{2i}$.
\end{proof}

The previous lemma furnishes an expression for traces about a specific class of matrix products. Notably, the condition delineated within this lemma comprehensively encompasses all instances of matrices in their standard forms.

\begin{rmk}
By \reflem{trForm}, if $m_1, \dots, m_{2d}$ are positive, then the trace of $W_1(m_1, \dots, m_{2d})$ is greater than two. Therefore, $W_1(m_1, \dots, m_{2d})$ is a hyperbolic element of $\PSL{\RR}$.
\end{rmk}

\subsection{Comparison of traces and anti-traces}
We previously established a standard form for a hyperbolic element in $\GL(\ZZ)$, represented as $$W_1(m_1, m_2, \dots, m_{2d})$$ with positive integers $m_1, m_2, \dots, m_{2d}$. This form facilitates entry comparisons, as shown in the subsequent proposition.

\begin{prop}\label{Prop:increasingEntry}
For every $d\in \NN$ and $m_1,m_2,\cdots, m_{2d}\in \NN$, the following holds for the matrix $W=W_1(m_1,m_2,\cdots, m_{2d})$:
\begin{itemize}
    \item $W_{1,1}>W_{1,2}\geq W_{2,2}>0$, and
    \item $W_{1,1}>W_{2,1}\geq W_{2,2}>0.$
\end{itemize}
\end{prop}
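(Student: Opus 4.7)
The plan is to prove both chains of inequalities simultaneously by induction on $d$, exploiting the fact that right-multiplication by $A_1^m B_1^n$ has a clean effect on the entries of $W$.

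For the base case $d=1$, I would just compute directly: $W_1(m_1,m_2) = A_1^{m_1}B_1^{m_2} = \left(\begin{smallmatrix} 1+m_1m_2 & m_1 \\ m_2 & 1 \end{smallmatrix}\right)$. Since $m_1, m_2 \geq 1$, we have $1+m_1m_2 \geq 1 + m_1 > m_1 \geq 1$ and symmetrically $1+m_1m_2 > m_2 \geq 1$, so both chains hold.

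For the inductive step, assume the proposition holds for some $d$, and write $W = \left(\begin{smallmatrix} a & b \\ c & d \end{smallmatrix}\right)$ with $a > b \geq d > 0$ and $a > c \geq d > 0$. Setting $m = m_{2d+1}$ and $n = m_{2d+2}$, a direct computation gives
\[
W' := W \cdot A_1^{m} B_1^{n} = \begin{pmatrix} a(1+mn)+bn & am+b \\ c(1+mn)+dn & cm+d \end{pmatrix}.
\]
So with the obvious primed notation, I would verify the four key algebraic identities
\begin{align*}
a' - b' &= a\bigl(1 + m(n-1)\bigr) + b(n-1),\\
b' - d' &= (a-c)m + (b-d),\\
a' - c' &= (a-c)(1+mn) + (b-d)n,\\
c' - d' &= c\bigl(1 + m(n-1)\bigr) + d(n-1).
\end{align*}
Each right-hand side is a sum of nonnegative terms (thanks to $m, n \geq 1$ and the inductive hypothesis) with at least one strictly positive summand (e.g.\ $a \geq 1$ in the first, $(a-c)m \geq 1$ in the second), so each of the required inequalities for $W'$ follows immediately. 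Positivity of $d' = cm+d$ is automatic.

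I do not anticipate a genuine obstacle; the only thing to be careful about is that the $\geq$ inequalities in the statement (between $W_{1,2}$ and $W_{2,2}$, and between $W_{2,1}$ and $W_{2,2}$) must be kept as $\geq$ rather than $>$ at the base case $d=1$ (when $m_1=1$ or $m_2=1$ equality can occur), but strictness in the remaining comparisons propagates through the induction without difficulty. An alternative combinatorial route via the index sets $J_{d,o}^{k}, J_{d,e}^{k}$ from the proof of \reflem{trForm} is possible, but the inductive multiplicative argument above is more compact.
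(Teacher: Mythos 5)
Your proposal is correct and takes essentially the same route as the paper: a direct computation for the base case $d=1$, followed by induction via right-multiplication by $A_1^{m}B_1^{n}$, with the same four telescoping identities for the entry differences $a'-b'$, $b'-d'$, $a'-c'$, $c'-d'$. Your version is just notationally cleaner (using $m,n$ in place of $m_{2n+1},m_{2n+2}$), and your remark about the $\geq$ versus $>$ distinction at the base case matches the paper's implicit handling.
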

\begin{proof}
First, we consider $$W^1=W_1(m_1,m_2)=
\begin{pmatrix}
1+m_1m_2&m_1\\
m_2&1
\end{pmatrix}.$$
Observe that 
$$W^1_{1,1}-W^1_{1,2}=1+m_1m_2-m_1=1+m_1(m_2-1)\geq 1 >0$$
and 
$$W^1_{1,1}-W^1_{2,1}=1+m_1m_2-m_2=1+m_2(m_1-1)\geq 1 >0$$
as $m_1,m_2\geq 1$.
Therefore, 
\begin{itemize}
    \item $W^1_{1,1}>W^1_{1,2}\geq W^1_{2,2}>0$, and
    \item $W^1_{1,1}>W^1_{2,1}\geq W^1_{2,2}>0.$
\end{itemize}
Now, for $k\in\{1,\cdots, d\}$, we define $W^k=W_1(m_1,\cdots,m_{2k})$. Assume that, for some $n\in \{1,\cdots, d-1\}$, $$W^n=
\begin{pmatrix}
a&b\\
c&d
\end{pmatrix}
$$
satisfies that $a>b\geq d>0$, and $a>c\geq d>0.$
Note that  
\begin{align*}
W^{n+1}&=W^nW_1(m_{2n+1},m_{2n+2})\\
&=\begin{pmatrix}
a&b\\
c&d
\end{pmatrix}
\begin{pmatrix}
1+m_{2n+1}m_{2n+2}&m_{2n+1}\\
m_{2n+2}&1
\end{pmatrix}\\
&=\begin{pmatrix}
a+am_{2n+1}m_{2n+2}+bm_{2n+2}&am_{2n+1}+b\\
c+cm_{2n+1}m_{2n+2}+dm_{2n+2}& cm_{2n+1}+d
\end{pmatrix}.
\end{align*}
Then, 
\begin{align*}
W^{n+1}_{1,1}-W^{n+1}_{1,2}&=a+am_{2n+1}m_{2n+2}+bm_{2n+2}-am_{2n+1}-b\\
&=a+am_{2n+1}(m_{2n+2}-1)+b(m_{2n+2}-1)\\
&>0
\end{align*}
as  $a>0$ and $m_{2n+2}\geq 1$,
and 
\begin{align*}
W^{n+1}_{1,1}-W^{n+1}_{2,1}&=a+am_{2n+1}m_{2n+2}+bm_{2n+2}-c-cm_{2n+1}m_{2n+2}-dm_{2n+2}\\
&=(a-c)(1+m_{2n+1}m_{2n+2})+(b-d)m_{2n+2}\\
&>0
\end{align*}
as $a>c$ and $b\geq d$.
Similarly,
\begin{align*}
W^{n+1}_{1,2}-W^{n+1}_{2,2}&=am_{2n+1}+b-cm_{2n+1}-d\\
&=(a-c)m_{2n+1}+(b-d)\\
&>0
\end{align*}
as $a>c$ and $b\geq d$
and 
\begin{align*}
W^{n+1}_{2,1}-W^{n+1}_{2,2}&=c+cm_{2n+1}m_{2n+2}+dm_{2n+2}-cm_{2n+1}-d\\
&=c+cm_{2n+1}(m_{2n+2}-1)+d(m_{2n+2}-1)\\
&>0
\end{align*}
as  $c>0$ and $m_{2n+2}\geq 1$.
Therefore, we can see that 
\begin{itemize}
    \item $W^{n+1}_{1,1}>W^{n+1}_{1,2}\geq W^{n+1}_{2,2}>0$, and
    \item $W^{n+1}_{1,1}>W^{n+1}_{2,1}\geq W^{n+1}_{2,2}>0.$
\end{itemize}
Thus, the desired result follows. 
\end{proof}

As a result of the aforementioned, we can compare the trace and anti-trace of a standard form.

\begin{lem}\label{Lem:comparisionEven}
For every $d\in \NN$ and $m_1,m_2,\cdots, m_{2d}\in \NN$,
\[
\frac{\tr^*(W_1(m_1,m_2,\cdots,m_{2d}))}{\tr(W_1(m_1,m_2,\cdots,m_{2d}))}<1.
\]
\end{lem}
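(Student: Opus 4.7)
The plan is to establish the sharper inequality $\tr(W) - \tr^*(W) > 0$ for $W := W_1(m_1, \dots, m_{2d})$, which then yields the desired ratio bound since $\tr(W) \geq 2 > 0$ by \reflem{trForm}.

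The argument relies on two facts in tandem: \refprop{increasingEntry}, which tells us that all four entries of $W$ are positive and satisfy $W_{1,1} > W_{1,2} \geq W_{2,2}$ together with $W_{1,1} > W_{2,1} \geq W_{2,2}$; and the determinant identity $W_{1,1}W_{2,2} - W_{1,2}W_{2,1} = 1$, which holds because $W$ is a product of matrices in $\SL(\ZZ)$.

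The crucial step is a short algebraic rearrangement. Starting from the obvious rewriting
\[
\tr(W) - \tr^*(W) \;=\; (W_{1,1} - W_{1,2}) - (W_{2,1} - W_{2,2}),
\]
I would multiply through by $W_{2,2}$ and substitute $W_{1,1}W_{2,2} = 1 + W_{1,2}W_{2,1}$ to land at
\[
W_{2,2}\bigl(\tr(W) - \tr^*(W)\bigr) \;=\; 1 + (W_{1,2} - W_{2,2})(W_{2,1} - W_{2,2}).
\]
By \refprop{increasingEntry} both factors on the right are non-negative, so the right hand side is at least $1$. Since $W_{2,2} > 0$, this yields $\tr(W) > \tr^*(W)$, and dividing by the positive quantity $\tr(W)$ produces the stated bound.

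The main difficulty is spotting the identity above; once it is in hand, the conclusion drops out immediately from \refprop{increasingEntry} with no further induction on $d$, since that proposition already absorbs the inductive work involved in chaining the matrix product.
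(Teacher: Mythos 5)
Your proof is correct, and it takes a genuinely different route from the paper. The paper proves \reflem{comparisionEven} by a second induction on $d$: the base case is the explicit computation $\tr(W^1) - \tr^*(W^1) = 1 + (m_1-1)(m_2-1) > 0$, and the inductive step writes out $W^{n+1} = W^n \cdot W_1(m_{2n+1}, m_{2n+2})$ entrywise and compares the resulting trace and anti-trace term by term, invoking \refprop{increasingEntry} and the inductive hypothesis $\tr^*(W^n) < \tr(W^n)$. Your argument instead exploits the fact that $W \in \SL(\ZZ)$, so $W_{1,1}W_{2,2} - W_{1,2}W_{2,1} = 1$, and packages the comparison into the single identity
\[
W_{2,2}\bigl(\tr(W) - \tr^*(W)\bigr) = \det(W) + (W_{1,2} - W_{2,2})(W_{2,1} - W_{2,2}),
\]
after which \refprop{increasingEntry} supplies positivity of $W_{2,2}$ and non-negativity of both factors on the right, so $\tr(W) - \tr^*(W) \geq 1/W_{2,2} > 0$. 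What this buys is a closed-form, non-inductive reduction to \refprop{increasingEntry} that makes the role of the determinant transparent; it also yields the slightly sharper quantitative statement $\tr(W) - \tr^*(W) \geq 1$ (since the left-hand side is an integer). What the paper's route buys is a self-contained chain that reuses the exact same matrix-product decomposition already introduced in the proof of \refprop{increasingEntry}, at the cost of a second layer of induction.
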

\begin{proof}
First, we consider $$W^1=W_1(m_1,m_2)=
\begin{pmatrix}
1+m_1m_2&m_1\\
m_2&1
\end{pmatrix}.$$
Then, 
\begin{align*}
\tr(W^1)-\tr^*(W^1)&=2+m_1m_2-m_1-m_2\\
&=1+(m_1-1)(m_2-1)>0
\end{align*}
as $m_1,m_2\geq 1$. As $\tr(W^1)>0$,
this implies 
\[
\frac{\tr^*(W^1)}{\tr(W^1)}<1.
\]

Now, for $k\in\{1,\cdots, d\}$, we define $W^k=W_1(m_1,\cdots,m_{2k})$. Assume that there is $n\in \{1,\cdots, d-1\}$ such that $$W^n=
\begin{pmatrix}
a&b\\
c&d
\end{pmatrix}
$$
satisfies that 
\[
\frac{\tr^*(W^n)}{\tr(W^n)}=\frac{b+c}{a+d}<1
\]
Note that  
\begin{align*}
W^{n+1}&=W^nW_1(m_{2n+1},m_{2n+2})\\
&=\begin{pmatrix}
a&b\\
c&d
\end{pmatrix}
\begin{pmatrix}
1+m_{2n+1}m_{2n+2}&m_{2n+1}\\
m_{2n+2}&1
\end{pmatrix}\\
&=\begin{pmatrix}
a+am_{2n+1}m_{2n+2}+bm_{2n+2}&am_{2n+1}+b\\
c+cm_{2n+1}m_{2n+2}+dm_{2n+2}& cm_{2n+1}+d
\end{pmatrix}.
\end{align*}
Then,
\begin{align*}
\frac{\tr^*(W^{n+1})}{\tr(W^{n+1})}&=\frac{c+cm_{2n+1}m_{2n+2}+dm_{2n+2}+am_{2n+1}+b}{a+am_{2n+1}m_{2n+2}+bm_{2n+2}+cm_{2n+1}+d}
\\
&= \frac{(cm_{2n+1}m_{2n+2}+am_{2n+1})+dm_{2n+2}+b+c}{(am_{2n+1}m_{2n+2}+cm_{2n+1})+bm_{2n+2}+a+d}.
\end{align*}
By \refprop{increasingEntry}, $b\geq d$ and, by assumption, $a+d>b+c$. Moreover, 
\begin{align*}
(am_{2n+1}m_{2n+2}+cm_{2n+1})-&(cm_{2n+1}m_{2n+2}+am_{2n+1})\\
&=(a-c)m_{2n+1}m_{2n+2}-(a-c)m_{2n+1}\\
&=(a-c)m_{2n+1}(m_{2n+2}-1)\\
&\geq 0.
\end{align*}
Therefore, we can conclude that 
\[
\frac{\tr^*(W^{n+1})}{\tr(W^{n+1})}<1.
\]
Thus, the desired result follows.
\end{proof}
For every odd number $k\in\NN$ and for $m_1,m_2,\cdots,m_k\in \ZZ$, we also write $$W_1(m_1,m_2,\cdots,m_k)=A_1^{m_1}B_1^{m_2}\cdots A_1^{m_{k-2}}B_1^{m_{k-1}}A_1^{m_k}$$
where $W_1(m_1)=A_1^{m_1}$.
Then, we can state similar inequality for odd numbers of products as follow.
\begin{lem}\label{Lem:comparisionOdd}
For every $d\in \NN$ and $m_1,m_2,\cdots, m_{2d+1}\in \NN$,
\[
\frac{\tr(W_1(m_1,m_2,\cdots,m_{2d},m_{2d+1}))}{\tr^*(W_1(m_1,m_2,\cdots,m_{2d},m_{2d+1}))}<1.
\]
\end{lem}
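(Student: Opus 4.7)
The plan is to mirror the inductive argument used for \reflem{comparisionEven}, reducing the odd-length case to the even-length one by peeling off the trailing factor $A_1^{m_{2d+1}}$. Setting $V := W_1(m_1, \dots, m_{2d}) = \begin{pmatrix} a & b \\ c & e \end{pmatrix}$, I would immediately invoke the structural inequalities already established: \refprop{increasingEntry} yields $a > b \geq e > 0$ and $a > c \geq e > 0$, and \reflem{comparisionEven} yields $b + c < a + e$.

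A direct computation then produces
\[
W_1(m_1, \dots, m_{2d+1}) = V \cdot A_1^{m_{2d+1}} = \begin{pmatrix} a & a m_{2d+1} + b \\ c & c m_{2d+1} + e \end{pmatrix},
\]
whence
\[
\tr^*\bigl(V A_1^{m_{2d+1}}\bigr) - \tr\bigl(V A_1^{m_{2d+1}}\bigr) = (a-c)\, m_{2d+1} - \bigl((a+e) - (b+c)\bigr).
\]
Rewriting $(a+e) - (b+c) = (a-b) - (c-e)$ and using $m_{2d+1} \geq 1$, $c \geq e$, and $b \geq e$, the chain
\[
(a-c)\, m_{2d+1} + (c-e) \;\geq\; (a-c) + (c-e) \;=\; a - e \;\geq\; a - b
\]
would give the nonstrict comparison $\tr \leq \tr^*$, and since the anti-trace is positive this yields $\tr/\tr^* \leq 1$.

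The hard part will be promoting this to the strict inequality asserted in the lemma, because the displayed chain is tight precisely when $m_{2d+1} = 1$ and $b = e$. To eliminate such configurations, I would strengthen the inductive invariant behind \refprop{increasingEntry} to track when the entry comparison $b \geq e$ becomes strict, and verify that the strict gap $b > e$ is forced as soon as at least two syllable blocks are present in the even-length factor $V$. The remaining small-$d$ base cases would be handled by direct matrix computation, using that
\[
W_1(m_1, m_2, m_3) = \begin{pmatrix} 1 + m_1 m_2 & (1 + m_1 m_2) m_3 + m_1 \\ m_2 & m_2 m_3 + 1 \end{pmatrix}
\]
makes $\tr^* - \tr$ a transparent polynomial expression in $m_1, m_2, m_3$ whose sign can be read off directly.
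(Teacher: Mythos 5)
Your setup is essentially the same as the paper's own proof: peel off the trailing factor $A_1^{m_{2d+1}}$, set $V = W_1(m_1,\dots,m_{2d}) = \begin{pmatrix} a & b \\ c & e \end{pmatrix}$, and invoke \refprop{increasingEntry} together with \reflem{comparisionEven}. The key difference is that you are actually more careful than the paper. You correctly observe that the resulting chain only yields the nonstrict inequality $\tr \leq \tr^*$, with equality possible exactly when $m_{2d+1}=1$ and $b=e$. The paper's proof, by contrast, asserts $\tr^*-\tr = (a-c)m_{2d+1} + (b+c)-(a+e) > 0$ directly ``since $b+c<a+e$ and $a>c$'' --- premises which, as your analysis shows, only give $\tr^*-\tr \geq 0$.

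However, the gap you identify cannot be closed, because the lemma is false as stated. The tight configuration you isolate occurs at $d=1$ with $m_1=m_3=1$: a direct computation gives
\[
W_1(1,m_2,1) \;=\; A_1 B_1^{m_2} A_1 \;=\; \begin{pmatrix} 1+m_2 & 2+m_2 \\ m_2 & 1+m_2 \end{pmatrix},
\]
so $\tr = \tr^* = 2+2m_2$ and $\tr/\tr^* = 1$, not $<1$. Your proposed repair (proving $b>e$ strictly once $V$ has at least two syllable blocks) is correct for $d\geq 2$, but there is nothing to rescue the one-block case $d=1$, $m_1=1$; had you carried out the ``direct matrix computation'' you sketch for $W_1(m_1,m_2,m_3)$, you would have found exactly this counterexample. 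Fortunately the nonstrict bound $\tr\leq\tr^*$ that you do establish is all the paper uses downstream: in Case~3 of the proof of \refthm{almostMinimal}, substituting $t/s \leq 1$ and $4/s^2\leq 1$ still gives $(*) \leq 1 + \tfrac{1}{2}\sqrt{10} + \tfrac{1}{2}\sqrt{13+4\sqrt{10}} < 6$, so the final estimate there is unaffected.
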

\begin{proof}
For simplicity, we write $$W_{2d}=W_1(m_1,m_2,\cdots,m_{2d})=\begin{pmatrix}
    a&b\\c&d
\end{pmatrix}$$ and 
$$W_{2d+1}=W_1(m_1,m_2,\cdots,m_{2d+1}).$$
By \refprop{increasingEntry} and \reflem{comparisionEven}, we have that
\begin{itemize}
    \item $a>b\geq d$, 
    \item $a>c\geq d$, and
    \item $b+c<a+d.$
\end{itemize}
Note that 
\[
W_{2d+1}=W_{2d}A^{m_{2d+1}}
=\begin{pmatrix}
    a&b\\c&d
\end{pmatrix}\begin{pmatrix}
    1&m_{2d+1}\\0&1
\end{pmatrix}=\begin{pmatrix}
a&am_{2d+1}+b\\c&cm_{2d+1}+d
\end{pmatrix}.
\]
Then, 
\begin{align*}
\tr^*(W_{2d+1})-\tr(W_{2d+1})&=(am_{2d+1}+b+c)-(cm_{2d+1}+a+d)\\
&=(a-c)m_{2d+1}+\{(b+c)-(a+d)\}.
\end{align*}
Since $b+c<a+d$ and $a>c$, we can conclude that 
$$\tr^*(W_{2d+1})-\tr(W_{2d+1})>0 $$
and so 
$$\frac{\tr(W_{2d+1})}{\tr^
*(W_{2d+1})}<1. $$
Therefore, the statement holds.
\end{proof}

\section{On the genus two handlebody} \label{Sec:GenusTwoHandlebody}
In this section, we prove the main theorem, \refthm{almostMinimal}. 
First, in \refsec{lifting},  
We introduce and discuss a homomorphic section $I$  of $A$, namely, $I$ is a homomorphism from $\Out(F_2)$ to $\cH_2$  such that $A \circ I=\id_{\Out(F_2)}$. This homomorphic section has been introduced in \cite[Example~5.8]{Hensel20}, in terms of interval bundle maps. 
However, we need to modify the definition for orientation reversing homeomorphisms. 

In fact, for each fully irreducible element $\varphi$ in $\Out(F_2)$, we will see that $I(\varphi)$ is a reducible element in $A^{-1}(\varphi)$. To construct a pseudo-Anosov map in $A^{-1}(\varphi)$, we take a meridian $\alpha$ so that we can apply the \emph{Penner construction} to the composition of $I(\varphi)$ and $T_\alpha$. To do this, in the end of \refsec{lifting}, we  take a suitable collection of curves. Then, in \refsec{Penner}, we construct a pseudo-Anosov by applying the Penner construction.

In \refsec{largestEigen}, we  estimate the eigenvalue of the Penner's linear representation to compute the stretch factors of the  pseudo-Anosov maps, constructed in \refsec{Penner}.

Finally, in \refsec{main}, we will complete the proof of  \refthm{almostMinimal}.

\subsection{A lifting of $\Out(F_2)$ to $\cH_2$}\label{Sec:lifting}

For each $v=(x_1,x_2,\dots, x_n)\in \RR^n$ and $r>0$, we denote the set 
$$\{(a_1,a_2,\cdots,a_n)\in \RR^n:|a_i-x_i|< r/2 \text{ for all }i  \}$$ 
by $C_r^n(v)$ and call it the \emph{$n$-cude} centered at $v$ with width $r$. For two subsets $A$ and $B$ of $\RR^n$, we write $$A+B=\{a+b:a\in A \text{ and 
}b\in B\}$$ for the \emph{sum} of $A$ and $B$.

Let $q$ be the natural projection map from $\RR^2$ to $\RR^2/\ZZ^2$. Set $$\cC=C_{1/2}^2(0,0)+(1/2,1/2)+\ZZ^2.$$ Then, the image $q(\RR^2\setminus \cC)$ is homeomorphic to the torus $S_1^1$ with one boundary component. Hence, we can identify $q(\RR^2\setminus \cC)$ with $S_1^1$. Here, the boundary component $b$ of $S_1^1$ is $q(\partial C_{1/2}(1/2,1/2))$. Also, we consider $b$ as a punture in the following sense: in usual sense,  a mapping class of a compact surface with boundary is an isotopy class of a boundary fixing homeomoprhism. However,  for our purpose, we consider a mapping class of a compact surface with boundary as  a isotopy class of a boundary preserving homeomoprhism. Hence, we do not distinguish $\Mod^\pm(S_{1,1})$ with  $\Mod^\pm(S_1^1)$. 

Now, we consider the inclusion map $\tilde{j}:\RR^2\setminus \cC \to \RR^2\setminus (1/2,1/2)+\ZZ^2.$ Then, there is the inclusion map $j: S_1^1\to S_{1,1}$ such that $q\circ \tilde{j}=j\circ q.$ Then, $j$ induces an isomorphism $j_*: \pi_1(S_1^1)\to \pi_1(S_{1,1})$ and as $\{x_1,x_2\}$ in \refsec{OutToMod} is a free generating set of  $\pi_1(S_{1,1})$, we may set that $\{x_1,x_2\}$ is a free generating set of $\pi_1(S_1^1)$ via $j_*$. Under this identification, we have that $$\Mod^\pm(S_1^1)=\Mod^{\pm}(S_{1,1})=\Out(F_2)=\GL(\ZZ)$$

We define simple closed curves $\alpha$ and $\beta$ in $S_1^1$ by 
$$\alpha(t)=q(t,0)\text{ and }\beta(t)=q(0,t)$$
for $t\in [0,1].$
Then, $\pi_1(S_1^1,q(0,0))$ is a rank two free group generated by $\alpha$ and $\beta.$ Also, by the identification via $j_*$, $x_1$ and $x_2$ are corresponding to $\alpha$ and $\beta$, respectively.
Moreover, the Dehn twists $T_a$ and $T_b$ in $\Mod^\pm(S_1^1)$ about $a=\im{\alpha}$ and $b=\im{\beta}$, respectively, are corresponded to $A_1=\begin{pmatrix}
    1&1\\0&1
\end{pmatrix}$ and $
B_{-1}=\begin{pmatrix}
    1&0\\-1&1
\end{pmatrix}
$, respectively, in $\GL(\ZZ)$.

We consider $\widetilde{V_2}=(\RR^2\setminus \cC)\times J$ where $J=[-1/2,1/2]$ and define $\pi: \widetilde{V_2}\to S_1^1\times J$ by $(x,t)\mapsto (q(x),t)$. Note that $\pi$ is a covering projection and that $S_1^1\times J$ is a genus two handlebody $V_2$. We define an inclusion $k:S_1^1 \to S_1^1\times J$ by $k(x)=(x,0)$. Note that the homomorphism $k_* : \pi_1(S_1^1,q(0)) \to \pi_1(V_2, \pi(0))$ induced from $k$ is an isomorphism. We also identify $\pi_1(S_1^1)$ with $\pi_1(V_2)$ so that $\pi_1(V_2)$ is generated by $\{x_1,x_2\}$ via $k_* \circ j_*^{-1}$

We define an injective homomorphism $I^\circ$ from $\Homeo(S_1^1)$ to $\Homeo(V_2)$ as follows:
for $h\in \Homeo(S_1^1)$ and for $(x,t)\in V_2$,
\begin{align*}
    I^\circ(h)(x,t)=
    \begin{cases}
     (h(x),t) &\text{, if}~h\in \Homeop(S_1^1),\\
     (h(x),-t) &\text{, otherwise.}
    \end{cases}
\end{align*}
Then, $I^\circ$ induces the injective homomorphism $I:\Mod^\pm(S_1^1)\to \cH_2$ so that $I([h])=[I^\circ(h)]$. Here, $[\cdot]$ denotes the isotopy class. Then, $I$ is a homomorphic section of the surjection $A:\cH_2 \twoheadrightarrow \Out(F_2)$. To see this, $A\circ I: \Mod^\pm(S_1^1)\to  \Out(F_2)$ is an isomorphism by \cite[Theorem~8.8]{FarbMargalit12}. Under the identifications in \refsec{OutToGL} with the above, $A\circ I$ is the identity on $ \Out(F_2)$. Therefore, we can say that $I$ is a section of $A$ which is a homomorphism from $\Out(F_2)$ to $\cH_2$.  

Then, $I(T_a)$ is the annulus twist $T_{a\times \frac{1}{2}}T_{a\times -\frac{1}{2}}^{-1}$ about the annulus $a\times J$ in $\cH_2$. Also, $I(T_b)$ is the annulus twist $T_{b\times \frac{1}{2}}T_{b\times -\frac{1}{2}}^{-1}$ about the annulus $b\times J$ in $\cH_2$. 
As $\Out(F_2)=\GL(\ZZ)$ in \refsec{OutToGL}, we can say that 
$$A(T_{a\times \frac{1}{2}}T_{a\times -\frac{1}{2}}^{-1})= \begin{pmatrix}
1&1\\0&1    
\end{pmatrix}
\text{ and }
A(T_{b\times \frac{1}{2}}T_{b\times -\frac{1}{2}}^{-1})=\begin{pmatrix}
1&0\\-1&1    
\end{pmatrix}.
$$

Then, we consider the matrices 
$$\tilde{e}=\begin{pmatrix}
0&1\\
1&0\\
\end{pmatrix}\text{ and }\tilde{E}=\begin{pmatrix}
0&1&0\\
1&0&0\\
0&0&-1
\end{pmatrix}.$$
We think of $\tilde{e}$ and $\tilde{E}$  as linear transformations  on  $\RR^2$ and $\RR^3$, respectively. Note that $\tilde{e}(\RR^2\setminus \cC)= \RR^2\setminus \cC$ and  $\tilde{E}(\tilde{V_2})=\tilde{V_2}$. Then, 
there are  unique homeomorphisms $e$ and 
$E$ on $S_1^1$ and $V_2$, respectively, such that $e\circ q = q \circ \tilde{e}$ and $E\circ \pi = \pi \circ \tilde{E}$. Observe that $I([e])=[E]$. Like the above, we can say that $A([E])=\tilde{e}$. Also, as $\tilde{E}(t,0,1/2)=(0,t,-1/2)$ and $\tilde{E}(0,t,1/2)=(t,0,-1/2)$, $E(a\times \frac{1}{2})=b\times -\frac{1}{2}$  and $E(b\times \frac{1}{2})=a\times -\frac{1}{2}$.

Similarly, we consider the matrices 
$$\tilde{r}=\begin{pmatrix}
-1&0\\
0&-1\\
\end{pmatrix}\text{ and }\tilde{R}=\begin{pmatrix}
-1&0&0\\
0&-1&0\\
0&0&1
\end{pmatrix}.$$
We think of $\tilde{r}$ and $\tilde{R}$  as linear transformations  on  $\RR^2$ and $\RR^3$, respectively. Note that $\tilde{r}(\RR^2\setminus \cC)= \RR^2\setminus \cC$ and  $\tilde{R}(\tilde{V_2})=\tilde{V_2}$. Then, 
there are  unique homeomorphisms $r$ and 
$R$ on $S_1^1$ and $V_2$, respectively, such that $r\circ q = q \circ \tilde{e}$ and $R\circ \pi = \pi \circ \tilde{R}$. Observe that $I([r])=[R]$. Like the above, we can say that $A([R])=\tilde{r}$. Also, as $\tilde{R}(t,0,\pm1/2)=(-t,0,\pm1/2)$ and $\tilde{R}(0,t,\pm1/2)=(0,-t,\pm1/2)$, $R(a\times \pm\frac{1}{2})=a\times \pm \frac{1}{2}$  and $R(b\times \pm \frac{1}{2})=b\times \pm\frac{1}{2}$ with respect to sign. Moreover, $E\circ R=R\circ E$.

Finally, we consider a simple closed curve   $\gamma$ in $\partial V_2$ defined by  

\begin{align*}
\gamma(t)=
\begin{cases}
\pi(-\frac{1}{4},4t-\frac{1}{4},\frac{1}{2})&\text{ for }  0\leq t \leq \frac{1}{8}\\
\pi(-\frac{1}{4},\frac{1}{4},-8t+\frac{3}{2})&\text{ for } \frac{1}{8}\leq t \leq \frac{2}{8}\\
\pi(4t-\frac{5}{4},\frac{1}{4},-\frac{1}{2})&\text{ for }  \frac{2}{8}\leq t \leq \frac{3}{8}\\
\pi(\frac{1}{4},\frac{1}{4},8t-\frac{7}{2})&\text{ for }  \frac{3}{8}\leq t \leq \frac{4}{8}\\
\pi(\frac{1}{4},-4t+\frac{9}{4},\frac{1}{2 })&\text{ for }  \frac{4}{8}\leq t \leq \frac{5}{8}\\
\pi(\frac{1}{4},-\frac{1}{4}, -8t+\frac{11}{2} )&\text{ for }  \frac{5}{8}\leq t \leq \frac{6}{8}\\
\pi(-4t+\frac{13}{4}, -\frac{1}{4},-\frac{1}{2})&\text{ for }  \frac{6}{8}\leq t \leq \frac{7}{8}\\
\pi(-\frac{1}{4},-\frac{1}{4},8t-\frac{15}{2})&\text{ for }  \frac{7}{8}\leq t \leq 1.
\end{cases}
\end{align*}
Observe that $c=\im(\gamma)$ is a meridian of $\partial V_2$. Moreover, $E(c)=c$ and $R(c)=c$. 

For the simplicity, we write 
$$\delta_1=a\times \frac{1}{2}, \ \delta_2=b\times \frac{1}{2}, \ \delta_3=a\times -\frac{1}{2}, \ \delta_4=b\times -\frac{1}{2},\text{ and }\delta_5=c.$$

\subsection{The Penner construction}\label{Sec:Penner}
Note that by \refsec{trAndStr} and by \cite[Example~1.4]{BestvinaHandel92}, $\varphi$ is either a hyperbolic element or glide reflection in $\PGL{\ZZ}$ if and only if $\varphi$ is a fully irreducible in $\Out(F_2)=\GL(\ZZ)$.  

In this section, for each fully irreducible element $\varphi$ in $\Out(F_2)$, we construct a pseudo-Anosov element $\varphi^\star$ in $\cH_2$ such that $A(\varphi^\star)=\varphi$. 
To this end, we apply the Penner construction of a pseudo-Anosov mapping class \cite{Penner88}.
From now on, we follow the notations in \cite{Penner88}. See \cite[194~page]{Penner88}. 

Let $\sC^+=\{\delta_1,\delta_4\}$ and $\sD^-=\{\delta_2,\delta_3,\delta_5\}$. Also, $\sE=\{\delta_k\}_{k=1}^5=\sC^+ \cup \sD^-$ and $T$ denote the train track obtained in the proof of \cite[Theorem~3.1]{Penner88} with branches $\{b_i\}$. Then, $S(\sC^+,\sD^-)$ denotes the semigroup defined as
$$S(\sC^+,\sD^-)=\langle e\in \sE:[e,e']=1 \text{ if } e\cap e'=\emptyset \rangle.$$ For each $k$, there is a unique curve $t(\delta_k)$ in $T$ isopotic to $\delta_i$ and a measure $\mu_k$ on $T$ defined by 
\begin{align*}
\mu_k(b_i)=
\begin{cases}
1 &,\ b_i\subset t(e_k)\\
0 &,\ \text{otherwise.}\\
\end{cases}
\end{align*}
Then, $H$ is the convex hull of $\{\mu_k\}_{k=1}^5$ and the zero measure in the cone of measures on $T$. Note that $\{\mu_k\}_{k=1}^5$ is a positive basis for $H$.

The intersection matrix $\cA$ is defined by $\cA_{mn}=\card(\delta_m\cap \delta_n)$ and for each $k\in \{1,2,3,4,5\}$, the matrix $\cA_k$ is defined by 
\begin{align*}
   (\cA_k)_{mn}= \begin{cases}
   A_{mn}&,\ m=k\\
 0&,\ \text{otherwise.}\\
   \end{cases}
\end{align*}

\begin{thm}[Theorem 3.4 in \cite{Penner88}]\label{Thm:MRep}
The action of $S(\sC^+,\sD^-)$ on $H$  admits a faithful representation $\rho$ with respect to the basis $\{\mu_k\}$ as a semigroup of invertible (over $\ZZ$) positive matrice so that  
$\rho(\delta_k)=I_5+\cA_k$. Here, $I_5$ is the $5\times5$-identity matrix. 
\end{thm}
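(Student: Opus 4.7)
The plan is to follow Penner's original argument from \cite{Penner88} specialized to the curve system $\sE = \sC^+ \cup \sD^-$. The first step is to make explicit the train track $T$ carrying every $\delta_k$ simultaneously: at each transverse intersection of a curve in $\sC^+$ with a curve in $\sD^-$ one smooths the crossing in the unique way dictated by Penner's $+/-$ convention (curves in $\sC^+$ bend in one direction, curves in $\sD^-$ in the other). The resulting 1-complex with its induced tangential structure is the train track $T$, and each $\delta_k$ is isotopic to a unique carried trainpath $t(\delta_k)$. The measure $\mu_k$ that assigns weight $1$ to branches in $t(\delta_k)$ and $0$ elsewhere is checked to satisfy the switch conditions, so it lies in $H$. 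One then shows that the $\mu_k$ are extremal in $H$, span it, and form a positive integral basis; this uses that the $\delta_k$ are pairwise non-isotopic and that every integral switch-satisfying weight on $T$ decomposes as a nonnegative integer combination of the $\mu_k$.

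The main computational step is the formula $\rho(\delta_k) = I_5 + \cA_k$. Applying the right Dehn twist $T_{\delta_k}$ when $\delta_k \in \sC^+$ (respectively the left Dehn twist $T_{\delta_k}^{-1}$ when $\delta_k \in \sD^-$) to the curve $\delta_j$, and then isotoping the image back into a trainpath on $T$, produces a path traversing each branch of $t(\delta_j)$ exactly once together with each branch of $t(\delta_k)$ repeated exactly $|\delta_k \cap \delta_j|$ times. Hence the induced map on measures sends $\mu_j$ to $\mu_j + \cA_{kj}\,\mu_k$, which in matrix form with respect to $\{\mu_1,\dots,\mu_5\}$ is exactly $I_5 + \cA_k$. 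The asymmetric choice of twist direction (right on $\sC^+$, left on $\sD^-$) is precisely what ensures that every local smoothing contributes additively, so no negative entries ever appear. Since $\delta_k$ is simple we have $\cA_{kk}=0$, so only the off-diagonal portion of the $k$-th row of $I_5 + \cA_k$ deviates from the identity; such a shear matrix has determinant $1$ and integer inverse $I_5 - \cA_k$, giving invertibility over $\ZZ$.

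Once positivity and invertibility are in hand, the main obstacle is faithfulness of $\rho$. Here the argument is that the matrix action of $\rho$ on $H$ agrees, under the identification of integral points of $H$ with isotopy classes of multicurves carried by $T$, with the geometric action of the corresponding mapping class on such multicurves. Since $\sE$ fills the surface, any nontrivial word $w$ in the generators of $S(\sC^+,\sD^-)$ represents a nontrivial mapping class, and this mapping class acts nontrivially on the set of multicurves carried by $T$; hence $\rho(w) \neq I_5$. The strict positivity of the generators additionally guarantees that no cancellation can occur between distinct semigroup elements, so distinct words yield distinct positive matrices. This faithfulness argument is the subtle point that Penner's train-track analysis is designed to handle, and it is where I expect the bulk of the technical work to lie; the surrounding computations amount to combinatorial bookkeeping on $T$.
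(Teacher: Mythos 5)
The paper does not prove this theorem: it is quoted directly as Theorem 3.4 of Penner \cite{Penner88}, and the authors merely invoke it for the specific filling system $\sE = \sC^+ \cup \sD^-$ on $\partial V_2$ built in \refsec{lifting}. There is therefore no argument in the paper against which to compare your sketch; you are reconstructing Penner's original proof. Your outline captures its overall shape correctly --- smoothing the $\sC^+$--$\sD^-$ crossings (using the opposite twist conventions to prevent bigons and cancellation) to get the train track $T$, the trainpath measures $\mu_k$ as a basis for $H$, the shear formula $\rho(\delta_k)=I_5+\cA_k$ for the twist action on measures, and the observation that $\cA_{kk}=0$ forces $\cA_k^2=0$ so that $(I_5+\cA_k)^{-1}=I_5-\cA_k$ is integral.

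The one substantive weakness is your treatment of faithfulness. You argue that any nontrivial word $w$ yields $\rho(w)\neq I_5$; but for a \emph{semigroup} homomorphism this does not imply injectivity, which is what ``faithful representation'' requires. Your closing claim that ``strict positivity $\ldots$ guarantees that no cancellation can occur'' points at the right phenomenon without supplying an argument: positivity by itself does not make a semigroup homomorphism injective. Penner's actual injectivity argument shows that the carried trainpath of the image multicurve records, up to the allowed commutations of disjoint generators, the sequence of twists applied, and recovering the semigroup element from the matrix (equivalently from the carried multicurve) is where the real combinatorial work lies. You should also be careful saying ``distinct words yield distinct positive matrices'': words differing only by commutation of disjoint generators have equal images, so the statement must be made at the level of elements of $S(\sC^+,\sD^-)$ rather than words in the free monoid on $\sE$.
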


There is a semigroup homomorphism $\zeta : S(\sC^+,\sD^-)\to \cH_2$ such that for each $\delta\in \sE$,
\begin{align*}
  \zeta(\delta)=
  \begin{cases}
      T_\delta&,  \delta\in \sC^+\\
       T_\delta^{-1}&, \delta \in \sD^-.\\
  \end{cases}
\end{align*}

\begin{cor}[Corollary 3.5
in \cite{Penner88}]\label{Cor:PennCstr}
The map $\zeta$ is injective.
Moreover, for a word $w$ in $S(\sC^+,\sD^-)$, $\zeta(w)$ is pseudo-Anosov if each element in $\sE$ occurs at least once in $w$. In this case, the stretch factor of $\zeta(w)$ is equal to the Perron-Frobenius eigenvalue of $\rho(w)$.    
\end{cor}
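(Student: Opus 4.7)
The plan is to deduce all three assertions from the linear model provided by \refthm{MRep}, using the fact that the action of each Dehn twist on measures carried by $T$ is encoded by $\rho$. First, I would make precise the relation between $\zeta$ and $\rho$: for any measure $\mu \in H$, the measured foliation underlying $\mu$ is transformed by $\zeta(\delta_k)$ into the foliation whose train track coordinates are $\rho(\delta_k)\mu$. This is the classical twist formula (for $\delta\in\sC^+$ the right twist $T_\delta$ adds $i(\delta,\cdot)\mu_\delta$ to the measure, while for $\delta\in\sD^-$ the left twist $T_\delta^{-1}$ does the analogous thing with the opposite sign convention, and the sign is absorbed into the definition of $\sD^-$); together with $\cA_{mn}=\mathrm{card}(\delta_m\cap \delta_n)$ this yields exactly $\rho(\delta_k)=I_5+\cA_k$.

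Injectivity of $\zeta$ then follows immediately: if $\zeta(w)=\zeta(w')$ in $\cH_2$, both mapping classes induce the same map on the cone of measured foliations, so in particular they coincide on $H$, giving $\rho(w)=\rho(w')$; since $\rho$ is faithful by \refthm{MRep}, $w=w'$ in $S(\sC^+,\sD^-)$.

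For the pseudo-Anosov statement, I would first show that when every curve in $\sE$ appears in $w$, the matrix $\rho(w)$ is \emph{Perron--Frobenius}, i.e.\ has a strictly positive power. The key combinatorial input is that $\sC^+\cup \sD^-$ fills $\partial V_2$ (this is how the meridian $\delta_5$ and the four annulus-twist curves were arranged in \refsec{lifting}), so the bipartite intersection pattern between $\sC^+$ and $\sD^-$ is connected. A straightforward inductive expansion of $\rho(w)=\prod (I_5+\cA_{k_j})$ over the letters of $w$ shows that the $(m,n)$-entry is positive whenever one can reach $\delta_n$ from $\delta_m$ via a chain $\delta_m\cap \delta_{i_1}\ne \emptyset,\ \delta_{i_1}\cap \delta_{i_2}\ne\emptyset,\ldots$ respecting the order in which the letters appear in $w$; since every letter appears and the intersection graph is connected, some power of $\rho(w)$ is strictly positive. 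Perron--Frobenius then gives a unique eigenvector $\mu^+\in \mathrm{int}(H)$ with eigenvalue $\lambda>1$, and the transpose $\rho(w)^T$ (obtained by reading $w$ as acting on the dual track $T^*$ obtained by swapping the roles of $\sC^+$ and $\sD^-$) yields a transverse unstable eigenvector $\mu^-$ with the same eigenvalue.

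The pair $(\mu^+,\mu^-)$ then provides two transverse measured foliations on $\partial V_2$, each preserved projectively by $\zeta(w)$ with dilatation $\lambda$ and $\lambda^{-1}$ respectively; by Thurston's classification this certifies $\zeta(w)$ as pseudo-Anosov with stretch factor $\lambda_{\zeta(w)}=\lambda$, which is precisely the Perron--Frobenius eigenvalue of $\rho(w)$. The main obstacle I foresee is the Perron--Frobenius step: one must verify that \emph{every} appearance of \emph{every} letter together with the filling property forces a strictly positive power of $\rho(w)$, rather than just a nonnegative eventually positive pattern, and that the two eigenvectors $\mu^\pm$ genuinely yield transverse foliations (equivalently, that they are not carried by proper subtracks); this is where the geometric content of the Penner construction really lives, and it is handled in \cite{Penner88} by a careful analysis of how $T$ and $T^*$ fit together in $\partial V_2$.
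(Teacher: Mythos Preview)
The paper does not supply a proof of this corollary; it is quoted verbatim from Penner's original article \cite{Penner88} (as indicated by the attribution in the statement) and is used as a black box in the subsequent constructions. There is therefore no ``paper's own proof'' to compare against.

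That said, your outline is essentially Penner's original argument and is correct in its broad strokes: faithfulness of $\rho$ gives injectivity of $\zeta$, and the Perron--Frobenius analysis of $\rho(w)$ together with the filling property of $\sC^+\cup\sD^-$ produces the invariant transverse measured foliations certifying the pseudo-Anosov property with the claimed stretch factor. The one place where your sketch is slightly loose is the injectivity step: you deduce $\rho(w)=\rho(w')$ from $\zeta(w)=\zeta(w')$ by saying they act identically on $H$, but this presupposes that the action of a Dehn twist on $H$ depends only on its isotopy class (true, since measured foliations are isotopy invariants) and that this action coincides with $\rho$ (which is exactly the content of \refthm{MRep}). With that clarified, the argument goes through. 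The ``main obstacle'' you flag---that the eigenvectors $\mu^\pm$ genuinely define transverse foliations and not something degenerate carried by a proper subtrack---is indeed the substantive geometric point, and Penner handles it by the bigon-track/dual-track machinery you allude to.
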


Now, we come back to the construction problem.

\subsubsection{The case for hyperbolic isometries}\label{Sec:hyperbolic}
Let $M$ be a hyperbolic isometry in $\GL(\ZZ)$. Then, we can take the standard form $S(M)$ of $M$ in $\cO_{\GL(\ZZ)}(M)$ and so 
$$S(M)=\pm \begin{pmatrix}
1&1\\0&1    
\end{pmatrix}^{a_1}\begin{pmatrix}
1&0\\1&1    
\end{pmatrix}^{a_2}\cdots \begin{pmatrix}
1&1\\0&1    
\end{pmatrix}^{a_{d-1}}\begin{pmatrix}
1&0\\1&1    
\end{pmatrix}^{a_d} $$
for some positive integer $a_i$ where $d=d(M)$. If there is a pseudo-Anosov $S(M)^\star$ in $\cH_2$ such that $A(S(M)^\star)=S(M)$, then $M^\star=\psi S(M)^\star \psi^{-1}$ for some $\psi\in \cH_2$ as $S(M)\in \cO_{\GL(\ZZ)}(M)$ and $A$ is surjective.  It is enough to show that there is $S(M)^\star$. 

\textbf{Case 1} : The sign of $\tr(M)$ is positive, that is, $$S(M)=+ \begin{pmatrix}
1&1\\0&1    
\end{pmatrix}^{a_1}\begin{pmatrix}
1&0\\1&1    
\end{pmatrix}^{a_2}\cdots \begin{pmatrix}
1&1\\0&1    
\end{pmatrix}^{a_{d-1}}\begin{pmatrix}
1&0\\1&1    
\end{pmatrix}^{a_d}.$$ 

We take a word $w(S(M))=\delta_5\prod_{i=1}^{d(M)/2}(\delta_1\delta_3)^{a_{2i-1}}(\delta_2\delta_4)^{a_{2i}}$ in $S(\sC^+,\sD^-)$. Then, 
$$ \zeta(w(S(M))) =  T_{\delta_5}^{-1}\prod_{i=1}^{d(M)/2}(T_{\delta_1}T_{\delta_3}^{-1})^{a_{2i-1}}(T_{\delta_2}^{-1}T_{\delta_4})^{a_{2i}}.$$
By \refcor{PennCstr}, $\zeta(w(S(M)))$ is a pseudo-Anosov mapping class, the stretch factor of which is equal to the Perron-Frobenius eigenvalue of $\rho(w(S(M)))$.
Since $A(T_{\delta_5}^{-1})=I_2$, $A(T_{\delta_1}T_{\delta_3}^{-1})=A_1$, and $A(T_{\delta_2}^{-1}T_{\delta_4})=B_1$, 
$A(\zeta(w(S(M))))=S(M)$
where $I_2$ is the identity in $\GL(\ZZ)$.
Therefore, we set $S(M)^\star=\zeta(w(S(M)))$.

\textbf{Case 2} : The sign of $\tr(M)$ is negative, that is, $$S(M)=- \begin{pmatrix}
1&1\\0&1    
\end{pmatrix}^{a_1}\begin{pmatrix}
1&0\\1&1    
\end{pmatrix}^{a_2}\cdots \begin{pmatrix}
1&1\\0&1    
\end{pmatrix}^{a_{d-1}}\begin{pmatrix}
1&0\\1&1    
\end{pmatrix}^{a_d}.$$
We take a word $w(S(M))=\delta_5\prod_{i=1}^{d(M)/2}(\delta_1\delta_3)^{a_{2i-1}}(\delta_2\delta_4)^{a_{2i}}$ in $S(\sC^+,\sD^-)$. Then, $$ \zeta(w(S(M))) =  T_{\delta_5}^{-1}\prod_{i=1}^{d(M)/2}(T_{\delta_1}T_{\delta_3}^{-1})^{a_{2i-1}}(T_{\delta_2}^{-1}T_{\delta_4})^{a_{2i}}.$$
As $A([R])=\tilde{r}$ in \refsec{lifting}, 
we have $$A(\zeta(w(S(M)))[R])=S(M).$$
To see that $\zeta(w(S(M)))[R]$ is pseudo-Anosov, we consider the square  $$\zeta(w(S(M)))[R]\circ \zeta(w(S(M)))[R].$$ Note that $[R]=[R]^{-1}$ and $[R]\circ T_\delta \circ[R]=T_\delta$ for all $\delta\in \sE$. Therefore, 
$$\zeta(w(S(M)))[R]\circ \zeta(w(S(M)))[R]=\zeta(w(S(M)))\circ \zeta(w(S(M))).$$
By \refcor{PennCstr}, $\zeta(w(S(M)))\circ \zeta(w(S(M)))$ is pseudo-Anosov. This implies that $\zeta(w(S(M)))[R]$ is also pseudo-Anosov. Thus, we set $S(M)^\star=\zeta(w(S(M)))[R]$.

\subsubsection{The case for glide reflections}\label{Sec:glideRef}
Let $M$ be a glide reflection in $\GL(\ZZ)$. Then, we can take the standard form $S(M)$ of $M$ in $\cO_{\GL(\ZZ)}(M)$ and so 
$$S(M)=\pm \begin{pmatrix}
1&1\\0&1    
\end{pmatrix}^{a_1}\begin{pmatrix}
1&0\\1&1    
\end{pmatrix}^{a_2}\cdots \begin{pmatrix}
1&0\\1&1    
\end{pmatrix}^{a_{d-1}} \begin{pmatrix}
1&1\\0&1    
\end{pmatrix}^{a_{d}} \begin{pmatrix}
0&1\\0&1    
\end{pmatrix}
$$
for some positive integer $a_i$ where $d=d(M)$. Note that $d$ can be $1$. Again,  it is enough to show that there is $S(M)^\star$. 

\textbf{Case 1} : The sign of $\tr(M)$ is positive, that is, $$S(M)=+ \begin{pmatrix}
1&1\\0&1    
\end{pmatrix}^{a_1}\begin{pmatrix}
1&0\\1&1    
\end{pmatrix}^{a_2}\cdots \begin{pmatrix}
1&0\\1&1    
\end{pmatrix}^{a_{d-1}} \begin{pmatrix}
1&1\\0&1    
\end{pmatrix}^{a_{d}} \begin{pmatrix}
0&1\\1&0    
\end{pmatrix}
$$ 

We take a word $w(S(M))=\delta_5 \left[\prod_{i=1}^{(d(M)-1)/2}(\delta_1\delta_3)^{a_{2i-1}}(\delta_2\delta_4)^{a_{2i}}\right] (\delta_1\delta_3)^{a_d}$ in $S(\sC^+,\sD^-)$. 
Then, 
$$ \zeta(w(S(M))) =  T_{\delta_5}^{-1}\left[\prod_{i=1}^{(d(M)-1)/2}(T_{\delta_1}T_{\delta_3}^{-1})^{a_{2i-1}}(T_{\delta_2}^{-1}T_{\delta_4})^{a_{2i}}\right](T_{\delta_1}T_{\delta_3}^{-1})^{a_d}.$$

As $A([E])=\tilde{e}$ in \refsec{lifting}, we have 
$$A(\zeta(w(S(M)))[E])=S(M).$$

To see that $\zeta(w(S(M)))[E]$ is pseudo-Anosov, we observe that the square $\zeta(w(S(M)))[E]\circ \zeta(w(S(M)))[E]$ is pseudo-Anosov. Note that $[E]=[E]^{-1}$, $[E]\circ T_{\delta_1}\circ[E]=T_{\delta_4}$, $[E]\circ T_{\delta_2}\circ[E]=T_{\delta_3}$ and $[E]\circ T_{\delta_5}\circ[E]=T_{\delta_5}$ . Hence, 

\begin{align*}
\zeta(w(S(M)))[E]&\circ \zeta(w(S(M)))[E]\\
&=  T_{\delta_5}^{-1}\left[\prod_{i=1}^{(d(M)-1)/2}(T_{\delta_1}T_{\delta_3}^{-1})^{a_{2i-1}}(T_{\delta_2}^{-1}T_{\delta_4})^{a_{2i}}\right](T_{\delta_1}T_{\delta_3}^{-1})^{a_d}\\
&\quad \circ T_{\delta_5}^{-1}\left[\prod_{i=1}^{(d(M)-1)/2}(T_{\delta_4}T_{\delta_2}^{-1})^{a_{2i-1}}(T_{\delta_3}^{-1}T_{\delta_1})^{a_{2i}}\right](T_{\delta_4}T_{\delta_2}^{-1})^{a_d}\\
&=  T_{\delta_5}^{-1}\left[\prod_{i=1}^{(d(M)-1)/2}(T_{\delta_1}T_{\delta_3}^{-1})^{a_{2i-1}}(T_{\delta_2}^{-1}T_{\delta_4})^{a_{2i}}\right](T_{\delta_1}T_{\delta_3}^{-1})^{a_d}\\
&\quad \circ T_{\delta_5}^{-1}\left[\prod_{i=1}^{(d(M)-1)/2}(T_{\delta_2}^{-1}T_{\delta_4})^{a_{2i-1}}(T_{\delta_1}T_{\delta_3}^{-1})^{a_{2i}}\right](T_{\delta_2}^{-1}T_{\delta_4})^{a_d}.
\end{align*}
By \refcor{PennCstr}, we can see that $\zeta(w(S(M)))[E]\circ \zeta(w(S(M)))[E]$ is pseudo-Anosov and so $\zeta(w(S(M)))[E]$ is pseudo-Anosov. Therefore, we set $S(M)^\star=\zeta(w(S(M)))[E]$. 

\textbf{Case 2} : The sign of $\tr(M)$ is negative, that is, $$S(M)=- \begin{pmatrix}
1&1\\0&1    
\end{pmatrix}^{a_1}\begin{pmatrix}
1&0\\1&1    
\end{pmatrix}^{a_2}\cdots \begin{pmatrix}
1&0\\1&1    
\end{pmatrix}^{a_{d-1}} \begin{pmatrix}
1&1\\0&1    
\end{pmatrix}^{a_{d}} \begin{pmatrix}
0&1\\1&0    
\end{pmatrix}
$$ 
We take a word $w(S(M))=\delta_5 \left[\prod_{i=1}^{(d(M)-1)/2}(\delta_1\delta_3)^{a_{2i-1}}(\delta_2\delta_4)^{a_{2i}}\right] (\delta_1\delta_3)^{a_d}$ in $S(\sC^+,\sD^-)$. 
Then, 
$$ \zeta(w(S(M))) =  T_{\delta_5}^{-1}\left[\prod_{i=1}^{(d(M)-1)/2}(T_{\delta_1}T_{\delta_3}^{-1})^{a_{2i-1}}(T_{\delta_2}^{-1}T_{\delta_4})^{a_{2i}}\right](T_{\delta_1}T_{\delta_3}^{-1})^{a_d}.$$

As $A([E])=\tilde{e}$ and $A([R])=\tilde{r}$ in \refsec{lifting}, 
we have $$A(\zeta(w(S(M)))[E][R])=S(M).$$
Now, we show that $\zeta(w(S(M)))[E][R]$ is pseudo-Anosov. 
Note that $[R][E]=[E][R]$. Hence,
\begin{align*}
\zeta(w(S(M)))[E][R]&\circ \zeta(w(S(M)))[E][R]\\
&= \zeta(w(S(M)))[E]\circ \zeta(w(S(M)))[E]    
\end{align*}
Then, as in the previous case, we can see that $\zeta(w(S(M)))[E]\circ \zeta(w(S(M)))[E]$ is pseudo-Anosov and so $\zeta(w(S(M)))[E][R]$ is also pseudo-Anosov. Therefore, we set $S(M)^\star=\zeta(w(S(M)))[E][R]$. 
\subsection{Largest eigenvalues}\label{Sec:largestEigen}

Our main theorem comes from the comparison of the stretch factor of an fully irreducible outer automorphism $\varphi$ with the stretch factor of $\varphi^\star$. The stretch factor of $\varphi$ can be evaluated from $\tr(\varphi)$. In this section, we do an elementary linear algebra to estimate the stretch factor of $\varphi^\star$ by using \refcor{PennCstr}. To this end, we study the representation $\rho$ in \refthm{MRep}.  

\begin{lem}\label{Lem:eigenValues}
    Write $$\fA=\begin{pmatrix}
        1 & 1 & 0 & 0 & 2 \\
        0 & 1 & 0 & 0 & 0 \\
        0 & 0 & 1 & 1 & 0 \\
        0 & 0 & 0 & 1 & 0 \\
        0 & 0 & 0 & 0 & 1
    \end{pmatrix}, \quad \fB= \begin{pmatrix}
        1 & 0 & 0 & 0 & 0 \\
        1 & 1 & 0 & 0 & 0 \\
        0 & 0 & 1 & 0 & 0 \\
        0 & 0 & 1 & 1 & 2 \\
        0 & 0 & 0 & 0 & 1
    \end{pmatrix}, \quad \fC = \begin{pmatrix}
        1 & 0 & 0 & 0 & 0 \\
        0 & 1 & 0 & 0 & 0 \\
        0 & 0 & 1 & 0 & 0 \\
        0 & 0 & 0 & 1 & 0 \\
        2 & 0 & 0 & 2 & 1
    \end{pmatrix}.$$
    For every $d\in \NN$ and $m_1,\cdots, m_{2d} \in \ZZ$, the product $\fW(m_1,\cdots,m_{2d})=\prod_{i=1}^d \fA^{m_{2i-1}}\fB^{m_{2i}}$ is written by 
    \[
    \begin{pmatrix}
        a & b & 0 & 0 & 2b \\
        c & d & 0 & 0 & 2d-2 \\
        0 & 0 & a & b & 2a-2 \\
        0 & 0 & c & d & 2c \\
        0 & 0 & 0 & 0 & 1
    \end{pmatrix}    
    \]
where \[W_1(m_1,\cdots,m_{2d})=\prod_{i=1}^{d}A_1^{m_{2i-1}}B_1^{m_{2i}}=
\begin{pmatrix}
    a&b\\
    c&d
\end{pmatrix}.
\]
Therefore, $\fC\fW$ can be written by 
\[
    \begin{pmatrix}
        a & b & 0 & 0 & 2b \\
        c & d & 0 & 0 & 2d-2 \\
        0 & 0 & a & b & 2a-2 \\
        0 & 0 & c & d & 2c \\
        2a&2b & 2c&2d & 1+4(b+c)
    \end{pmatrix}.    
\]
    The characteristic polynomial $\det(xI_5-\fC\fW)$ of $\fC\fW$ is $$(x - 1) \left(x^2 - \tr(W_1)x + 1 \right) \left( x^2 - \left( \tr(W_1)+ 4\tr^*(W_1) \right)x + 1 \right).$$
\end{lem}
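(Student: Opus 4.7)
The plan is to prove the three claims---the closed-form expression for $\fW(m_1,\dots,m_{2d})$, the corresponding form for $\fC\fW$, and the factorization of $\det(xI_5 - \fC\fW)$---in sequence, exploiting the evident block structure of $\fA$ and $\fB$. Direct inspection shows that $\fA$ preserves the decomposition $\mathrm{span}\{e_1,e_2\} \oplus \mathrm{span}\{e_3,e_4\} \oplus \mathrm{span}\{e_5\}$ up to an affine correction in $e_5$: on each two-dimensional summand it acts as $A_1$, and $\fA e_5 = 2e_1 + e_5$. Symmetrically, $\fB$ acts as $B_1$ on each summand and sends $e_5$ to $2e_4+e_5$. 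Writing the induction hypothesis for $\fW := \fW(m_1,\dots,m_{2d})$ in block form as
\[\fW = \begin{pmatrix} W & 0 & \xi_1 \\ 0 & W & \xi_2 \\ 0 & 0 & 1 \end{pmatrix}, \qquad W = W_1(m_1,\dots,m_{2d}) = \begin{pmatrix}a&b\\c&d\end{pmatrix},\]
with $\xi_1 = (2b, 2d-2)^T$ and $\xi_2 = (2a-2, 2c)^T$, I would right-multiply by $\fA^{m_{2d+1}}\fB^{m_{2d+2}}$ (which has the same block shape with $W$ replaced by $A_1^{m_{2d+1}}B_1^{m_{2d+2}}$). The new top-left block becomes $W' = W \cdot A_1^{m_{2d+1}}B_1^{m_{2d+2}} = W_1(m_1,\dots,m_{2d+2})$, and the updated fifth-column entries match the claimed pattern via the entrywise relations $b' = am_{2d+1}+b$, $d' = cm_{2d+1}+d$, $a' = a + am_{2d+1}m_{2d+2}+bm_{2d+2}$, and $c' = c + cm_{2d+1}m_{2d+2}+dm_{2d+2}$.

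Because $\fC$ coincides with the identity outside its last row $(2,0,0,2,1)$, the product $\fC\fW$ agrees with $\fW$ except that its last row becomes $2(\mathrm{row}\,1) + 2(\mathrm{row}\,4) + (\mathrm{row}\,5) = (2a, 2b, 2c, 2d, 1+4(b+c))$, which is the second displayed matrix. To compute the characteristic polynomial, set $M = \fC\fW$ and $N = xI_2 - W$. Since $W$ is a product of $A_1, B_1 \in \SL(\ZZ)$, one has $\det W = ad-bc = 1$, so $\det N = x^2 - \tr(W)x + 1$. Applying the Schur complement formula to the natural block decomposition (with upper-left block $\mathrm{diag}(N,N)$) gives
\[\det(xI_5 - M) = (\det N)^2 \Big( x - 1 - 4(b+c) - 2\alpha N^{-1}\xi_1 - 2\beta N^{-1}\xi_2 \Big),\]
where $\alpha = (a,b)$ and $\beta = (c,d)$ denote the rows of $W$.

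The computational heart is to simplify the two terms $2\alpha N^{-1}\xi_1$ and $2\beta N^{-1}\xi_2$. Using $N^{-1} = (\det N)^{-1}\begin{pmatrix}x-d&b\\c&x-a\end{pmatrix}$ together with the identity $ad-bc=1$ (to absorb several $bc$ and $ad$ terms), I would check that
\[2\alpha N^{-1}\xi_1 = \frac{4b\big((\tr W-1)x - 1\big)}{\det N}, \qquad 2\beta N^{-1}\xi_2 = \frac{4c\big((\tr W-1)x - 1\big)}{\det N},\]
whence their sum is $\frac{4(b+c)((\tr W-1)x-1)}{\det N}$. Substituting back and expanding, the cross-term $4(b+c)\tr(W)\,x$ (coming once from $-4(b+c)(\tr W-1)x$ and once from the $-\tr(W)x$ inside $\det N$) cancels exactly, leaving
\[(x-1-4(b+c))\det N - 4(b+c)\big((\tr W-1)x-1\big) = (x-1)\big(x^2 - (\tr W + 4\tr^* W)x + 1\big).\]
Multiplying by the remaining factor of $\det N$ then yields the claimed triple factorization $(x-1)(x^2 - \tr(W_1)x + 1)(x^2 - (\tr W_1 + 4\tr^* W_1)x + 1)$.

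The main obstacle is algebraic bookkeeping rather than anything conceptual: several invocations of $ad-bc=1$ are needed to collapse the asymmetric intermediate expressions for $N^{-1}\xi_i$ into the symmetric forms involving $\tr W$ and $\tr^* W = b+c$, and the final cancellation hinges on noticing that the Schur-complement expression is divisible by $\det N$. Once these reductions are in place, both the block-product formula for $\fW$ and the factorization drop out.
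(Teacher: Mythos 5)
Your proposal is correct and follows essentially the same route as the paper's proof. The inductive block computation for $\fW$ is the same observation the paper makes (the paper phrases it as ``the set of matrices of this block form is closed under multiplication,'' which is your induction step), and the characteristic polynomial is what the paper dismisses as ``direct computation''; your Schur-complement organization is a clean way to carry out that computation, and the intermediate identities $2\alpha N^{-1}\xi_1 = 4b((\tr W - 1)x - 1)/\det N$ and $2\beta N^{-1}\xi_2 = 4c((\tr W - 1)x - 1)/\det N$ check out using $ad - bc = 1$. One small point common to both your write-up and the paper's: since the lemma allows $m_i \in \ZZ$, one also needs $\fA^{-1}, \fB^{-1}$ to lie in the semigroup of block matrices of the stated shape, which is true (using $\det W = 1$) but worth a sentence.
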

\begin{proof}
The first statement follows from the following observation.
Let $\fG$ be the set of all $5\times 5$-matrices of the following form:
\[
  \begin{pmatrix}
        a & b & 0 & 0 & 2b \\
        c & d & 0 & 0 & 2d-2 \\
        0 & 0 & a & b & 2a-2 \\
        0 & 0 & c & d & 2c \\
        0 & 0 & 0 & 0 & 1
    \end{pmatrix}.\]
If there are two matrices 
\[
  \begin{pmatrix}
        a & b & 0 & 0 & 2b \\
        c & d & 0 & 0 & 2d-2 \\
        0 & 0 & a & b & 2a-2 \\
        0 & 0 & c & d & 2c \\
        0 & 0 & 0 & 0 & 1
    \end{pmatrix} \text{ and }   \begin{pmatrix}
        e & f & 0 & 0 & 2f \\
        g & h & 0 & 0 & 2h-2 \\
        0 & 0 & e & f & 2e-2 \\
        0 & 0 & g & h & 2g \\
        0 & 0 & 0 & 0 & 1
    \end{pmatrix}, 
\] then 
\begin{align*}
    &\begin{pmatrix}
        a & b & 0 & 0 & 2b \\
        c & d & 0 & 0 & 2d-2 \\
        0 & 0 & a & b & 2a-2 \\
        0 & 0 & c & d & 2c \\
        0 & 0 & 0 & 0 & 1
    \end{pmatrix}\begin{pmatrix}
        e & f & 0 & 0 & 2f \\
        g & h & 0 & 0 & 2h-2 \\
        0 & 0 & e & f & 2e-2 \\
        0 & 0 & g & h & 2g \\
        0 & 0 & 0 & 0 & 1
    \end{pmatrix}\\
    &= \begin{pmatrix}
        ae+bg & af+bh & 0 & 0 & 2(af+bh) \\
        ce+dg & cf+dh & 0 & 0 & 2(cf+dh)-2 \\
        0 & 0 & ae+bg & af+bh & 2(ae+bg)-2 \\
        0 & 0 & ce+dg & cf+dh & 2(ce+dg) \\
        0 & 0 & 0 & 0 & 1
    \end{pmatrix} \\
\end{align*}
This implies that $\fG$ is a semigroup with the matrix multiplication. Therefore, as $\fA,\fB\in \fG$, $\fW\in\fG$ and, again, by the above computation, the first statement follows.
The second statement is obtained by direct computation.
\end{proof}
\begin{rmk}\label{Rmk:MRep}
Note that $\rho(\delta_1\delta_3)=\fA$, $\rho(\delta_2\delta_4)=\fB$, and $\rho(\delta_5)=\fC$ by \refthm{MRep}.
\end{rmk}

\begin{lem}\label{Lem:distEigen}
For any $d\in\NN$ and for $m_1,m_2,\cdots, m_{2d}\in \NN$, $\fC \fW$ has five distinct positive real  eigenvalues, $1$ and the following four:
\[
\frac{t+4s+\sqrt{(t+4s)^2-4}}{2},\
\frac{t+4s-\sqrt{(t+4s)^2-4}}{2},\
\frac{t+\sqrt{t^2-4}}{2}
\text{ and }
\frac{t-\sqrt{t^2-4}}{2}.
\]
where $t$ and $s$ are the trace and anti-trace of $W_1(m_1,m_2,\cdots, m_{2d}),$ respectively. Moreover, 
\[
\frac{t+4s+\sqrt{(t+4s)^2-4}}{2}> \frac{t+\sqrt{t^2-4}}{2}
>1>
\frac{t-\sqrt{t^2-4}}{2}
>\frac{t+4s-\sqrt{(t+4s)^2-4}}{2}.
\]
\end{lem}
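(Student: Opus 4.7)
The plan is to read off the eigenvalues directly from the characteristic polynomial provided by \reflem{eigenValues}, and then to verify positivity, distinctness, and the ordering using elementary monotonicity arguments together with the entry positivity already established in \refprop{increasingEntry}.

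First, I would factor the characteristic polynomial $(x-1)(x^2 - tx + 1)(x^2 - (t+4s)x + 1)$ from \reflem{eigenValues}, where $t = \tr(W_1(m_1,\dots,m_{2d}))$ and $s = \tr^*(W_1(m_1,\dots,m_{2d}))$. The five candidate eigenvalues are then $1$ together with the four roots of the two quadratic factors, which are given by the quadratic formula exactly as stated in the lemma.

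Second, I would show that both quadratics have two distinct positive real roots. For this it suffices to show $t > 2$ and $t + 4s > 2$, so both discriminants $t^2 - 4$ and $(t+4s)^2 - 4$ are strictly positive, and since each product of roots equals $1$ and each sum is positive, both roots in each pair are positive. By \reflem{trForm}, $t = 2 + \sum_{i=1}^{d} c_d(2i)$, and since every $m_j \geq 1$ the coefficient $c_d(2) \geq m_1 m_2 \geq 1$ so $t \geq 3 > 2$. For $s$, \refprop{increasingEntry} gives that all entries of $W_1(m_1,\dots,m_{2d})$ are positive integers, so the anti-trace $s \geq 2$, hence $t + 4s > t > 2$.

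Third, I would verify distinctness and the stated ordering. Since $s > 0$ we have $t + 4s \neq t$, so the two quadratic factors have no common root; moreover $1$ is not a root of either quadratic (plugging $x = 1$ gives $2 - t \neq 0$ and $2 - t - 4s \neq 0$ respectively), so all five eigenvalues are distinct. For the ordering, introduce the functions
\[
f(x) = \frac{x + \sqrt{x^2 - 4}}{2}, \qquad g(x) = \frac{x - \sqrt{x^2 - 4}}{2},
\]
defined on $(2, \infty)$. A direct calculation gives $f(x)g(x) = 1$, so $g = 1/f$; and $f$ is strictly increasing on $(2,\infty)$ (its derivative is manifestly positive), hence $g$ is strictly decreasing there. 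Since $f(2) = 1$, the inequality $f(x) > 1 > g(x)$ holds for all $x > 2$. Applying $f$ and $g$ to the inequality $t + 4s > t > 2$ then yields exactly the chain
\[
f(t+4s) > f(t) > 1 > g(t) > g(t+4s),
\]
which is the statement of the lemma.

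The argument is essentially routine given the ingredients in place; the only care needed is in checking $s > 0$ and $t > 2$, both of which follow quickly from the trace formula and the entrywise positivity lemma, so I do not anticipate a genuine obstacle.
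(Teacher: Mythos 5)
Your proof is correct and follows essentially the same route as the paper: factor the characteristic polynomial from Lemma~\ref{Lem:eigenValues}, read off the two quadratics, check the discriminants, and order the roots using that each pair of roots multiplies to~$1$. One small point in your favor: you explicitly establish the strict inequality $t>2$ (via $c_d(2)\geq m_1m_2\geq 1$, so $t\geq 3$), which is what actually guarantees $t^2-4>0$ and hence the distinctness of the last two eigenvalues; the paper's proof only asserts $t\geq 2$, which, taken at face value, would permit $t^2-4=0$ and a repeated root at~$1$, so your version closes a small gap in the written argument even though the stronger bound $t\geq 3$ does hold.
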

\begin{proof}
By \refprop{increasingEntry}, $t$ and $s$ are positive integers bigger than or equal to $2$. Hence, $(t+4s)^2-4$ and $t^2-4$ are non-negative integers. This implies that the eigenvalues are real numbers. 
Also, we have that 
$$t+4s>t\geq2$$
and
$$\sqrt{(t+4s)^2-4}>\sqrt{t^2-4}\geq 0$$ and 
that $$a-\sqrt{a^2-4}=\sqrt{a^2}-\sqrt{a^2-4}>0$$ for all positive real number $a\geq 2$.
This implies that the eigenvalues are positive and  that the first eigenvalue listed in the statement is the largest eigenvalue. Then,  the second statement follows from the facts that the first and second eigenvalues are distinct and the product of them is equal to $1$  and that the third and fourth eigenvalues are distinct and the product of them is equal to $1$. 
\end{proof}
\begin{rmk}\label{Rmk:PowEigen}
Under the assumption in \reflem{distEigen}, any positive $n^{th}$-power of $\fC \fW $ has five distinct real eigenvalues which are the $n^{th}$-powers of the eigenvalues of $\fC \fW $. The largest eigenvalue of $(\fC \fW )^n$ is the $n^{th}$-power of the largest eigenvalue of $\fC \fW $.    
\end{rmk}

\begin{lem}\label{Lem:eigenValues2}
Write 
\[\fE = \begin{pmatrix}
    0 & 0 & 0 & 1 & 0 \\
    0 & 0 & 1 & 0 & 0 \\
    0 & 1 & 0 & 0 & 0 \\
    1 & 0 & 0 & 0 & 0 \\
    0 & 0 & 0 & 0 & 1
\end{pmatrix}.\]
Then,
\[
\fE^{-1}=\fE,\fE\fA\fE=\fB, \fE\fB\fE=\fA, \text{ and }\fE\fC\fE= \fC.
\] and for every $d\in \NN$ and $m_1,\cdots, m_{2d} \in \ZZ$, 
the characteristic polynomial $\det(xI_5-\fC \fW \fE)$ of $\fC \fW(m_1,\cdots,m_{2d} ) \fE$ is
\[(x-1)\{1
-4\tr^*(W)x
-(\tr^*(W)^2+4\tr(W)\tr^*(W)+2)x^2
-4\tr^*(W)x^3
+x^4)\]
where $W=W_1(m_1,\cdots,m_{2d})$. Therefore, the eigenvalues of $\fC\fW\fE$ are $1$ and the following four :

\[
s+\frac{1}{2}\sqrt{5s^2+4ts+4}+\frac{1}{2}\sqrt{9s^2+4ts+4s\sqrt{5s^2+4ts+4}},
\]
\[
s+\frac{1}{2}\sqrt{5s^2+4ts+4}-\frac{1}{2}\sqrt{9s^2+4ts+4s\sqrt{5s^2+4ts+4}},
\]
\[
s-\frac{1}{2}\sqrt{5s^2+4ts+4}+\frac{1}{2}\sqrt{9s^2+4ts-4s\sqrt{5s^2+4ts+4}},\]
and 
\[
s-\frac{1}{2}\sqrt{5s^2+4ts+4}-\frac{1}{2}\sqrt{9s^2+4ts-4s\sqrt{5s^2+4ts+4}}\]
where $t=\tr(W)$ and $s=\tr^*(W)$.
\end{lem}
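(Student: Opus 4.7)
My plan is to break the statement into three pieces: the involution identities for $\fE$, the characteristic polynomial of $\fC\fW\fE$, and the extraction of roots.

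For the conjugation relations $\fE^{-1}=\fE$, $\fE\fA\fE=\fB$, $\fE\fB\fE=\fA$, and $\fE\fC\fE=\fC$, I would proceed by direct matrix multiplication. The matrix $\fE$ is the involutory permutation matrix implementing the transposition $(1\,4)(2\,3)$ on the first four coordinates and fixing the fifth. Conjugation by $\fE$ therefore acts on each $5\times5$ matrix by swapping rows $1\leftrightarrow 4$, rows $2\leftrightarrow 3$, and the same on columns; comparing the resulting entries with $\fA,\fB,\fC$ yields the stated identities at once. This step is routine bookkeeping.

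For the characteristic polynomial, I would first use \reflem{eigenValues} to write $\fW=\fW(m_1,\ldots,m_{2d})$ explicitly in terms of the entries $a,b,c,d$ of $W_1(m_1,\ldots,m_{2d})$, and then form $\fC\fW\fE$ by hand. Right-multiplication by $\fE$ permutes the columns of $\fW$ (columns $1\leftrightarrow4$, $2\leftrightarrow3$), and left-multiplication by $\fC$ only modifies the last row by adding twice rows $1$ and $4$. The resulting matrix
\[
\fC\fW\fE=\begin{pmatrix}
0 & 0 & b & a & 2b\\
0 & 0 & d & c & 2d-2\\
b & a & 0 & 0 & 2a-2\\
d & c & 0 & 0 & 2c\\
2d & 2c & 2b & 2a & 1+4(b+c)
\end{pmatrix}
\]
has a vanishing upper-left $2\times2$ block and a vanishing central $2\times2$ block, so that $\det(xI_5-\fC\fW\fE)$ can be computed by cofactor expansion along, say, the fifth row or by a block-matrix formula. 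Keeping track of $t=a+d=\tr(W)$ and $s=b+c=\tr^*(W)$ (and using $ad-bc=\pm1$), the expansion collapses to
\[
(x-1)\bigl(x^4-4sx^3-(s^2+4ts+2)x^2-4sx+1\bigr),
\]
matching the claim. The factor of $(x-1)$ should already appear naturally in the computation, corresponding to the eigenvector coming from the last coordinate's affine behavior; if it does not emerge cleanly, I would verify separately that $(1,0,0,1,-1)^{\!\top}$ or a similar vector is an eigenvector of $\fC\fW\fE$ with eigenvalue $1$.

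To extract the four roots of the quartic factor, I would exploit its palindromic form (coefficients $1,-4s,-(s^2+4ts+2),-4s,1$). Dividing by $x^2$ and setting $y=x+x^{-1}$ reduces it to
\[
y^2-4sy-(s^2+4ts+4)=0,
\]
so $y=2s\pm\sqrt{5s^2+4ts+4}$. Each value of $y$ in turn gives $x=\tfrac{y\pm\sqrt{y^2-4}}{2}$, and substituting the two values of $y$ produces exactly the four expressions listed in the lemma.

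The only mildly delicate step is the $5\times5$ determinant expansion; however, the sparsity of $\fC\fW\fE$ and the identities $ad-bc=\det W_1=\pm1$ keep the calculation entirely elementary. I expect the main bookkeeping obstacle to be verifying that the coefficient of $x^2$ assembles into exactly $-(s^2+4ts+2)$ — this is where the cross-terms from the block structure conspire, and where the palindromic form of the quartic must be checked carefully before substituting $y=x+x^{-1}$.
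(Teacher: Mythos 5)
Your proposal is correct and follows essentially the same route as the paper: verify the $\fE$-conjugation identities by direct computation, compute the characteristic polynomial of $\fC\fW\fE$, and then solve the palindromic quartic factor via the substitution $z=x+x^{-1}$ (the paper's $z$ is your $y$). Two small points: since $W_1$ is a product of $A_1^{m_i}$ and $B_1^{m_j}$ one always has $\det W_1=+1$ (not merely $\pm1$), and your candidate eigenvector $(1,0,0,1,-1)^{\top}$ for eigenvalue $1$ does not in fact work, though you already hedged on that and the factor $(x-1)$ does emerge from the determinant expansion.
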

\begin{proof}

The first statement follows from direct computation. It is enough to show the second statements. In the following computation, we can see that $\det(xI_5-\fC\fW\fE)$ can be factorized.
\begin{align*}
 1
-4sx
-(s^2+4ts+2)x^2
-4sx^3
+x^4&=x^2\{x^2+x^{-2}-4s(x+x^{-1})-(s^2+4ts+2)\}\\
&=x^2\{(x+x^{-1})^2-4s(x+x^{-1})-(s^2+4ts+4)\}\\
&=x^2\{z^2-4sz-(s^2+4ts+4)\}
\end{align*}
where $z=x+x^{-1}$. 
Then, we solve  $z^2-4sz-(s^2+4ts+4)=0$ as an equation of $z$-variable.
\begin{align*}
z&=2s\pm \sqrt{4s^2+(s^2+4ts+4)}\\
&=2s\pm \sqrt{5s^2+4ts+4}
\end{align*}
Finally, we solve $z=x+x^{-1}$ to find the four solutions for $x$. In the case where
\[x+x^{-1}=2s+ \sqrt{5s^2+4ts+4},\]
we solve 
\[x^2-(2s+ \sqrt{5s^2+4ts+4})x+1=0.\]
Therefore, 
\begin{align*}
x&=\frac{2s+ \sqrt{5s^2+4ts+4}\pm\sqrt{\{2s+ \sqrt{5s^2+4ts+4})\}^2-4}}{2}\\
&=s+\frac{1}{2}\sqrt{5s^2+4ts+4}\pm\frac{1}{2}\sqrt{9s^2+4ts+4s\sqrt{5s^2+4ts+4}}
\end{align*}
The case where \[x+x^{-1}=2s- \sqrt{5s^2+4ts+4}\] is similar. Thus, we are done.
\end{proof}

\begin{lem}\label{Lem:distEigen2}
For any $d\in \NN$ and for $m_1,\cdots,m_{2d}\in \ZZ$, if $m_i>0$  for all $i\neq 2d$ and $m_{2d}\geq 0$, $\fC\fW(m_1,m_2,\cdots,m_{2d}) \fE$ has five distinct real eigenvalues (listed in \reflem{eigenValues2}), the absolute values of which are also distinct. Moreover, \[
s+\frac{1}{2}\sqrt{5s^2+4ts+4}+\frac{1}{2}\sqrt{9s^2+4ts+4s\sqrt{5s^2+4ts+4}},
\]
is the  eigenvalue that has the largest absolute value. Here, $t$ and $s$ are the trace and anti-trace of $W_1(m_1,m_2,\cdots,m_{2d})$, respectively.
\end{lem}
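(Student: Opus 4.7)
The plan is to read everything off the closed-form expressions in \reflem{eigenValues2} and convert the problem into elementary inequalities in $t=\tr(W)$ and $s=\tr^*(W)$, where $W=W_1(m_1,\ldots,m_{2d})$. First I would establish uniform lower bounds on $t$ and $s$: under the hypothesis, either all $m_i$ are positive (so \refprop{increasingEntry} and \reflem{comparisionEven} apply) or $m_{2d}=0$ (so $W=W_1(m_1,\ldots,m_{2d-1})$ and \reflem{comparisionOdd} applies). A short case analysis gives $t\geq 2$ and $s\geq 1$ in every setting, and hence the key bound $s+4t\geq 9$.

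For reality and pairwise distinctness of the four non-trivial eigenvalues, recall from \reflem{eigenValues2} that $\lambda_1,\lambda_2$ solve $X^2-(2s+\sqrt{5s^2+4ts+4})X+1=0$ while $\lambda_3,\lambda_4$ solve $X^2-(2s-\sqrt{5s^2+4ts+4})X+1=0$. The first discriminant $9s^2+4ts+4s\sqrt{5s^2+4ts+4}$ is manifestly positive, and for the second, squaring $9s^2+4ts\geq 4s\sqrt{5s^2+4ts+4}$ reduces positivity to $(s+4t)^2\geq 64$, which holds strictly by the bound above. Vieta's relations give $\lambda_1\lambda_2=\lambda_3\lambda_4=1$, and since the two quadratics have different linear coefficients their root sets are disjoint. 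Substituting $X=1$ into either quadratic leads to an impossibility under $s\geq 1$, so $1$ does not coincide with any other eigenvalue, and all five are distinct.

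For signs and the ordering of absolute values, the positive sum and product of $\lambda_1,\lambda_2$ force $\lambda_1>1>\lambda_2>0$, while for $\lambda_3,\lambda_4$ the sum $2s-\sqrt{5s^2+4ts+4}$ is negative (since $5s^2+4ts+4>4s^2$) and the product is $1$, so $0>\lambda_3>-1>\lambda_4$ and $|\lambda_3|<1<|\lambda_4|$. Writing $A=\sqrt{5s^2+4ts+4}$, $B=\sqrt{9s^2+4ts+4sA}$, $C=\sqrt{9s^2+4ts-4sA}$, the decisive comparison is
\[
|\lambda_1|-|\lambda_4|=\bigl(s+\tfrac{1}{2}A+\tfrac{1}{2}B\bigr)-\bigl(-s+\tfrac{1}{2}A+\tfrac{1}{2}C\bigr)=2s+\tfrac{1}{2}(B-C)>0,
\]
using $B^2-C^2=8sA>0$. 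Inverting via $\lambda_1\lambda_2=\lambda_3\lambda_4=1$ yields the chain $|\lambda_1|>|\lambda_4|>1>|\lambda_3|>|\lambda_2|$, which simultaneously certifies distinctness of the absolute values and identifies $\lambda_1$ as the eigenvalue of largest modulus. I expect the main obstacle to be the second discriminant bound $(s+4t)^2\geq 64$: the remaining sign and magnitude comparisons are essentially mechanical once $A,B,C$ are named, but establishing this bound requires careful bookkeeping around the degenerate configuration $d=1$, $m_2=0$ where $t$ attains its minimum value $2$.
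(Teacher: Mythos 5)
Your proposal is correct and follows essentially the same route as the paper's proof: both establish $t\geq 2$ and $s\geq 1$, reduce the reality of the second quadratic's roots to the positivity of $(9s^2+4ts)^2-(4s\sqrt{5s^2+4ts+4})^2$ (which you helpfully factor as $s^2[(s+4t)^2-64]$, whereas the paper leaves it as $s^4+8ts^3+16s^2(t^2-4)$), and then deduce the strict ordering $|\lambda_1|>|\lambda_4|>1>|\lambda_3|>|\lambda_2|$ from the sign analysis and Vieta's relations. One small imprecision: \reflem{comparisionOdd} is stated only for products of length $\geq 3$, so it does not literally apply in the degenerate case $d=1$, $m_{2}=0$ where $W=A_1^{m_1}$, but that citation is not actually needed there — the bounds $t=2$, $s=m_1\geq 1$ follow by inspection, as you effectively note.
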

\begin{proof}
By \reflem{eigenValues2}, we have the formulas for the eigenvalues of $\fC\fW\fE$. By using \refprop{increasingEntry}, we can see that $s$ and $t$ are positive integers under the assumption that  $m_i>0$  for all $i\neq 2d$ and $m_{2d}\geq 0$. Therefore, the components $$s,\ \sqrt{5s^2+4ts+4},\ \text{and } \sqrt{9s^2+4ts+4s\sqrt{5s^2+4ts+4}}$$ in the formulas in \reflem{eigenValues2} are  positive real numbers bigger than or equal to $1$. Then, it follows immediately that \[
s+\frac{1}{2}\sqrt{5s^2+4ts+4}+\frac{1}{2}\sqrt{9s^2+4ts+4s\sqrt{5s^2+4ts+4}}.
\] is a positive real number bigger than $1$. Also, to see that the eigenvalues are real numbers, it is enough to show that $$\sqrt{9s^2+4ts-4s\sqrt{5s^2+4ts+4}}$$ is real, equivalently,
$$9s^2+4ts\geq 4s\sqrt{5s^2+4ts+4}.$$
Observe that 
\begin{align*}
    (9s^2+4ts)^2- (4s\sqrt{5s^2+4ts+4})^2&=81s^4+72ts^3+16t^2s^2-16s^2(5s^2+4ts+4)\\
&=81s^4+72ts^3+16t^2s^2-80s^4-64ts^3-64s^2\\
&=s^4+8ts^3+16s^2(t^2-4)\\
&>0.
\end{align*}
as $s>0$ and $t\geq 2$ by \reflem{trForm}.
As $9s^2+4ts$ and  $4s\sqrt{5s^2+4ts+4}$ are positive, we may conclude that 
\[9s^2+4ts > 4s\sqrt{5s^2+4ts+4}\]
and that 
\[9s^2+4ts- 4s\sqrt{5s^2+4ts+4}\] is positive.

Now, we want to show that the eigenvalues are distinct. To see this, we first compare the components, 
\begin{align*}
\sqrt{5s^2+4ts+4},\ \sqrt{9s^2+4ts-4s\sqrt{5s^2+4ts+4}}&,\  \text{and } \\
&\sqrt{9s^2+4ts+4s\sqrt{5s^2+4ts+4}}.
\end{align*}
Note that by the previous estimation, these are positive real numbers and that, obviously, 
$$\sqrt{9s^2+4ts+4s\sqrt{5s^2+4ts+4}}>\sqrt{9s^2+4ts-4s\sqrt{5s^2+4ts+4}}$$
as $s,t>0$.
Then, observe that 
\begin{align*}
9s^2+4ts+4s\sqrt{5s^2+4ts+4}-&(5s^2+4ts+4)\\
&=4(s^2-1)+4s\sqrt{5s^2+4ts+4}\\
&>0.
\end{align*}
as $s$ is a positive integer.
Also, observe that 
\begin{align*}
(5s^2+4ts+4)-&(9s^2+4ts-4s\sqrt{5s^2+4ts+4})\\
&=-4s^2+4+4s\sqrt{5s^2+4ts+4}\\
&=4\sqrt{5s^4+4ts^3+4s^2}-4\sqrt{s^4-2s^2+1}\\
&>0.
\end{align*}
as $t$ and $s$ are  positive integers. Therefore, we have that 
\begin{align*}
\sqrt{9s^2+4ts+4s\sqrt{5s^2+4ts+4}}&>\sqrt{5s^2+4ts+4}\\
&>\sqrt{9s^2+4ts-4s\sqrt{5s^2+4ts+4}}\\
&>0.
\end{align*}
Therefore, we can see that  \[
s+\frac{1}{2}\sqrt{5s^2+4ts+4}+\frac{1}{2}\sqrt{9s^2+4ts+4s\sqrt{5s^2+4ts+4}}.
\] is the unique eigenvalue that has the  largest absolute value.

Meanwhile, the third and fourth eigenvalues listed in \reflem{eigenValues2} are different from $1$. It follows from the facts that they are distinct and that, from the proof of \reflem{eigenValues2},  the product of them is equal to $1$. Hence, the absolute value of one of them lies between $1$ and the absolute value of the first eigenvalue and the absolute value of the other is less than $1$. Therefore, we also conclude that the second eigenvalue in \reflem{eigenValues2} is the unique eigenvalue that has the smallest absolute value. This follows from the facts that  the product of the first and second eigenvalues listed in \reflem{eigenValues2} is equal to $1$ and that the first eigenvalue has the largest absolute value. Thus, the five eigenvalues of $\fC \fW \fE$ are distinct real numbers, the absolute values of which are also distinct, and the positive eigenvalue \[
s+\frac{1}{2}\sqrt{5s^2+4ts+4}+\frac{1}{2}\sqrt{9s^2+4ts+4s\sqrt{5s^2+4ts+4}},
\] has the largest absolute value.
\end{proof}
\begin{rmk}\label{Rmk:PowEigen2}
Under the assumption in \reflem{distEigen2}, any positive $n^{th}$-power of $\fC \fW \fE$ has five distinct real eigenvalues which are the $n^{th}$-powers of the eigenvalues of $\fC \fW \fE$. The largest eigenvalue of $(\fC \fW \fE)^n$ is the $n^{th}$-power of the largest eigenvalue of $\fC \fW \fE$    
\end{rmk}

\subsection{The main theorem}\label{Sec:main}
Now, we prove the main theorem in the following form.

\begin{thm}\label{Thm:almostMinimal}
For each fully irreducible outer automorphism $\varphi$ in $\Out(F_2)$, there is a pseudo-Anosov element $\psi$ in $A^{-1}(\varphi)$ with $\lambda_\psi /\mu_\varphi<10$. 
\end{thm}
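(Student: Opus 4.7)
The plan is to apply the explicit Penner-style construction of \refsec{Penner} to produce a pseudo-Anosov $\psi \in A^{-1}(\varphi)$, then to estimate $\lambda_\psi / \mu_\varphi$ using the eigenvalue formulas of \refsec{largestEigen} together with the trace versus anti-trace comparisons of \refsec{trace}. Since $A$ is surjective, conjugation is $A$-equivariant, and the stretch factor is conjugation-invariant in both $\Out(F_2)$ and $\cH_2$, it suffices to prove the inequality with $\varphi$ replaced by its standard form $S(\varphi) \in \cO_{\GL(\ZZ)}(\varphi)$: a pseudo-Anosov in $A^{-1}(S(\varphi))$ may be conjugated by a handlebody lift of the conjugator to land in $A^{-1}(\varphi)$ with the same stretch factor, and $\mu_\varphi = \mu_{S(\varphi)}$. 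Accordingly, take $\psi := S(\varphi)^\star$ from \refsec{Penner}, which is $\zeta(w(S(\varphi)))$ optionally composed with $[R]$ and/or $[E]$ depending on whether $\varphi$ is hyperbolic or a glide reflection and on the sign of $\tr(\varphi)$.

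Because $[R]$ and $[E]$ act by permutation on the Penner generators $T_{\delta_k}$, the square $\psi^2$ always lies in the Penner semigroup, so by \refcor{PennCstr} the quantity $\lambda_\psi$ is the Perron--Frobenius eigenvalue of $\fC\fW$ (hyperbolic case) or of $\fC\fW\fE$ (glide-reflection case); \reflem{distEigen} and \reflem{distEigen2} give these in closed form in terms of $t := \tr(W_1(a_1,\ldots,a_d))$ and $s := \tr^*(W_1(a_1,\ldots,a_d))$. On the $\Out(F_2)$ side, $\mu_\varphi$ is the spectral radius of $S(\varphi) \in \GL(\ZZ)$: in the hyperbolic case $\mu_\varphi = \tfrac{1}{2}(t + \sqrt{t^2 - 4})$, and in the glide-reflection case the identity $\tr(W_1 E_0) = \tr^*(W_1) = s$ with $\det = -1$ yields $\mu_\varphi = \tfrac{1}{2}(s + \sqrt{s^2 + 4})$.

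The remainder is an elementary case estimate. In the hyperbolic case, \reflem{distEigen} gives $\lambda_\psi < t + 4s$, while $\mu_\varphi > t/2$ since $t > 2$, and \reflem{comparisionEven} gives $s < t$; hence
\[
\frac{\lambda_\psi}{\mu_\varphi} \;<\; \frac{t + 4s}{t/2} \;=\; 2 + \frac{8s}{t} \;<\; 10.
\]
In the glide-reflection case $W_1$ has odd syllable length; for length at least $3$, \reflem{comparisionOdd} gives $t < s$, while the length-one exception $W_1 = A_1^{a_1}$ has $t = 2$ and $s \geq 1$ by direct computation. Either way $t \leq 2s$, and plugging this into the nested radicals of \reflem{distEigen2} using crude bounds of the form $\sqrt{a^2 + b} \leq a + \sqrt{b}$ yields $\lambda_\psi \leq Cs$ for an explicit constant $C < 6$, while $\mu_\varphi > s$, so $\lambda_\psi/\mu_\varphi$ stays well below $10$. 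The main obstacle is taming the nested radicals in the glide-reflection formula when \reflem{comparisionOdd} is inapplicable; the unifying inequality $t \leq 2s$ is what lets a single crude estimate cover both subcases.
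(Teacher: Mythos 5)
Your proposal is correct and follows essentially the same route as the paper: reduce to standard form by conjugation, take the Penner-built $S(\varphi)^\star$ (squaring to re-enter the Penner semigroup when $[R]$ or $[E]$ appears), read off $\lambda_\psi$ from Lemmas~\ref{Lem:distEigen} and \ref{Lem:distEigen2}, and compare against $\mu_\varphi$ via the trace/anti-trace inequalities. The only cosmetic difference is that you fold the paper's two glide-reflection subcases ($d=1$ and $d>1$) into a single estimate using $t \le 2s$; the numerical constant you claim ($C<6$) may need slightly sharper bounds than the crude $\sqrt{a^2+b}\le a+\sqrt{b}$, but any reasonable estimate lands comfortably below $10$.
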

\begin{proof}
Recall the identifications in \refsec{OutF2} and \refsec{lifting}.
Let $\varphi$ be a fully irreducible outer automorphism in $\Out(F_2)$ and $M_\varphi$ the corresponding matrix in $\GL(\ZZ)$.
In \refsec{Penner},  we construct a pseudo-Anosov mapping class $M_\varphi^\star$ so that $A(M_\varphi^\star)=M_\varphi$. By \refsec{stdForm}, there is a matrix $N$ in $\GL(\ZZ)$ such that $NMN^{-1}$ is a standard form of $M$. Note that the standard form of $M$ is uniquely defined up to conjugation. As $A$ is surjective, there is a $\psi_N$ in $\cH_2$ such that $A(\psi_N)=N$. Then, $A(\psi_NM_\varphi^\star \psi_N^{-1})=NM_\varphi N^{-1}$.
The stretch factors are invariant under conjugation, that is, 
$$\lambda_{M_\varphi^\star}=\lambda_{\psi_NM_\varphi^\star \psi_N^{-1}} \text{ and }\mu_{M_\varphi}=\mu_{NM_\varphi N^{-1}}.$$
Therefore, without loss of generality, we may assume that $M_\varphi$ is of standard form, that is, $S(M)=M$.       
As in \refsec{hyperbolic} and \refsec{glideRef}, we consider four cases according to the isometry type of $M_\varphi$ in $\PSL{\ZZ}$ and the sign of $\tr(M_\varphi)$.

\textbf{Case 1} : $M_\varphi$ is hyperbolic and the sign of $\tr(M
_\varphi)$ is positive, that is, 
$$M_\varphi=W_1(m_1,m_2,\cdots, m_{2d})$$  
for some $d\in\NN$ and for some $m_1,m_2,\cdots,m_{2d}\in \NN$. We write $t=\tr(M_\varphi)$ and $s=\tr^*(M_\varphi)$.  The stretch factor $\mu_{M_\varphi}$ is the largest eigenvalue of $M_\varphi$,
$$\frac{t+\sqrt{t^2-4}}{2}$$
as remarked in \refsec{trAndStr}.
Then, 
$M_\varphi^\star$ is a pseudo-Anosov mapping class, the stretch factor of which is equal to the Perron-Frobenius eigenvalue of $\rho(w(M_\varphi))$. Note that  $\rho(w(M_\varphi))=\fC\fW(m_1,m_2,\cdots,m_{2d})$ by \refrmk{MRep}. By \reflem{distEigen}, the Perron-Frobenius eigenvalue is 
\[
\frac{t+4s+\sqrt{(t+4s)^2-4}}{2}.\]
Now, we have that 
\begin{align*}
\frac{\lambda_{M_\varphi^\star}}{\mu_{M_\varphi}}&=\frac{t+4s+\sqrt{(t+4s)^2-4}}{t+\sqrt{t^2-4}}\\
&=\frac{1+4s/t+\sqrt{(1+4s/t)^2-4/t^2}}{1+\sqrt{1-4/t^2}} \\
&<1+4s/t+\sqrt{(1+4s/t)^2-4/t^2}\\
&<2(1+4s/t)\\
&<2(1+4)\\
&=10
\end{align*}
The first and second inequalities are implied by the fact that $t$ and $s$ are integers bigger than or equal to $2$ by \refprop{increasingEntry}. The third inequality follows from \reflem{comparisionEven}. This shows the first case.

\textbf{Case 2} : 
$M_\varphi$ is hyperbolic and the sign of $\tr(M
_\varphi)$ is negative, that is, 
$$M_\varphi=-W_1(m_1,m_2,\cdots, m_{2d})$$  
for some $d\in\NN$ and for some $m_1,m_2,\cdots,m_{2d}\in \NN$. We write $t=\tr(W_1)$ and $s=\tr^*(W_1)$. In this case, the stretch factor $\mu_{M_\varphi}$ is the largest eigenvalue of $W_1$,
$$\frac{t+\sqrt{t^2-4}}{2}$$
as remarked in \refsec{trAndStr}.

On the other hand, 
$M_\varphi^\star$ constructed in \refsec{Penner} is a pseudo-Anosov mapping  class. Now, we want to estimate the stretch factor of $M_\varphi^\star$. To see this, we consider the following equality:
$$M_\varphi^\star\circ M_\varphi^\star=\zeta(w(M))\circ \zeta(w(M))=\zeta(w(M)w(M)).$$
By \refrmk{MRep}, 
$$\rho(w(M)w(M))=\fC \fW \fC\fW$$
where $\fW=\fW(m_1,m_2,\cdots,m_{2d}).$ By \refcor{PennCstr}, the stretch factor of $M_\varphi^\star\circ M_\varphi^\star$ is the Perron-Frobenius eigenvalue of $(\fC \fW)^2$. Since $$\lambda_{M_\varphi^\star\circ M_\varphi^\star}=\lambda_{M_\varphi^\star}^2$$, by \refrmk{PowEigen}, we can see that $\lambda_{M_\varphi^\star}$ is equal to the largest eigenvalue of $\fC\fW$. Therefore, 
by \reflem{distEigen}, 
$$\lambda_{M_\varphi^\star}=
\frac{t+4s+\sqrt{(t+4s)^2-4}}{2}.$$
Then, by the computation in Case $1$, we can also obtain that 
$$\frac{\lambda_{M_\varphi^\star}}{\mu_{M_\varphi}}<10.$$

\textbf{Case 3} :
$M_\varphi$ is a glide reflection and the sign of $\tr(M
_\varphi)$ is positive, that is, 
$$M_\varphi=W_1(m_1,m_2,\cdots,m_{2d-2}, m_{2d-1})\tilde{e}$$  
for some $d\in\NN$ and for some $m_1,m_2,\cdots,m_{2d-1}\in \NN$. Here, $\tilde{e}$ is defined in \refsec{lifting}. We write $t=\tr(W_1)$ and $s=\tr^*(W_1)$. Then, $s=\tr(W_1\tilde{e})$ and $t=\tr^*(W_1\tilde{e})$.
In this case, as remarked in \refsec{trAndStr}, the stretch factor $\mu_{M_\varphi}$ is the largest eigenvalue of $M_\varphi$ and the equality 
$$\mu_{M_\varphi}-\mu_{M_\varphi}^{-1}=\tr(W_1\tilde{e})=s$$ holds.

On the other hand, 
$M_\varphi^\star$ constructed in \refsec{Penner} is a pseudo-Anosov mapping  class. Now, we want to estimate the stretch factor of $M_\varphi^\star$. To see this, we consider the following equality:
$$M_\varphi^\star\circ M_\varphi^\star=\zeta(w)$$
where 
\begin{align*}
w=\delta_5 [\prod_{k=1}^{d-1}
(\delta_1\delta_3)^{m_{2k-1}}&(\delta_2\delta_4)^{m_{2k}}](\delta_1\delta_3)^{m_{2d-1}}\\
&\times\delta_5[\prod_{k=1}^{d-1}(\delta_2\delta_4)^{m_{2k-1}}(\delta_1\delta_3)^{m_{2k}}](\delta_2\delta_4)^{m_{2d-1}}
\end{align*} if $d>1$ and 
$$w=\delta_5(\delta_1\delta_3)^{m_{2d-1}}\delta_5(\delta_2\delta_4)^{m_{2d-1}}$$ if $d=1$.
By \refthm{MRep}, \refrmk{MRep}, and \reflem{eigenValues2}, 
$$\rho(w)=(\fC\fW)(\fC\fE\fW\fE)=(\fC\fW)(\fE\fC\fW\fE)=(\fC\fW\fE)^2$$
where $\fW=\fW(m_1,m_2,\cdots, m_{2d-1})$. Then, by \refcor{PennCstr}, the stretch factor of $M_\varphi^\star\circ M_\varphi^\star$ is the Perron-Frobenius eigenvalue of $(\fC\fW\fE)^2$. 
Since $$\lambda_{M_\varphi^\star\circ M_\varphi^\star}=\lambda_{M_\varphi^\star}^2$$, by \refrmk{PowEigen2}, we can see that $\lambda_{M_\varphi^\star}$ is equal to the largest eigenvalue of $\fC\fW\fE$.
Therefore, 
by \reflem{distEigen2}, 
$$\lambda_{M_\varphi^\star}=
s+\frac{1}{2}\sqrt{5s^2+4ts+4}+\frac{1}{2}\sqrt{9s^2+4ts+4s\sqrt{5s^2+4ts+4}}
.$$
Now, we have that 
\begin{align*}
\frac{\lambda_{M_\varphi^\star}}{\mu_{M_\varphi}}&< \frac{\lambda_{M_\varphi^\star}}{\mu_{M_\varphi}-\mu_{M_\varphi}^{-1}}\\
&= s^{-1}\left[ s+\frac{1}{2}\sqrt{5s^2+4ts+4}+\frac{1}{2}\sqrt{9s^2+4ts+4s\sqrt{5s^2+4ts+4}}\right]\\
&=1+\frac{1}{2}\sqrt{5+4\frac{t}{s}+\frac{4}{s^2}}+\frac{1}{2}\sqrt{9+4\frac{t}{s}+4\sqrt{5+4\frac{t}{s}+\frac{4}{s^2}}}=(*).
\end{align*}
The first inequality follows from the fact that $0<\mu_{M_\varphi}-\mu_{M_\varphi}^{-1}<\mu_{M_\varphi}$.

To estimate an upper bound for the equation $(*)$, we need to consider two cases according to whether $d=1$ or not.

If $d>1$, then
\begin{align*}
(*)&<1+\frac{1}{2}\sqrt{5+4+1}+\frac{1}{2}\sqrt{9+4+4\sqrt{5+4+1}} \\
&<1+2+3\\
&=6.
\end{align*}
The first inequality follows from \reflem{comparisionOdd} and the fact that $s\geq 2$ by \refprop{increasingEntry}. By direct computation, we can see the second inequality.

Finally, we consider the case of $d=1$. As
$W_1(m_1)=A_1^{m_1}=A_{m_1}$, $t=2$ and $s=m_1$. Then,  $$\frac{t}{s}=\frac{2}{m_1}\leq 2 \text{ and } s=m_1\geq 1.$$
Therefore, we have 
\begin{align*}
(*)&<1+\frac{1}{2}\sqrt{5+8+4}+\frac{1}{2}\sqrt{9+8+4\sqrt{5+8+4}} \\
&<1+3+3\\
&=7.
\end{align*}
Thus, 
$$\frac{\lambda_{M_\varphi^\star}}{\mu_{M_\varphi}}<10.$$

\textbf{Case 4} : $M_\varphi$ is a glide reflection and the sign of $\tr(M
_\varphi)$ is negative, that is, 
$$M_\varphi=-W_1(m_1,m_2,\cdots,m_{2d-2}, m_{2d-1})\tilde{e}$$  
for some $d\in\NN$ and for some $m_1,m_2,\cdots,m_{2d-1}\in \NN$.  We write $t=\tr(W_1)$ and $s=\tr^*(W_1)$. Then, $s=\tr(W_1\tilde{e})$ and $t=\tr^*(W_1\tilde{e})$.
As discussed in \refsec{trAndStr}, the stretch factor $\mu_{M_\varphi}$ is equal to the largest eigenvalue of $W_1\tilde{e}$ and the equality 
$$\mu_{M_\varphi}-\mu_{M_\varphi}^{-1}=\tr(W_1\tilde{e})=s$$ holds.

On the other hand, 
$M_\varphi^\star$ constructed in \refsec{Penner} is a pseudo-Anosov mapping  class. Now, we want to estimate the stretch factor of $M_\varphi^\star$. To see this, we consider the following equality:
$$M_\varphi^\star\circ M_\varphi^\star=\zeta(w)$$
where 
\begin{align*}
w=\delta_5 [\prod_{k=1}^{d-1}
(\delta_1\delta_3)^{m_{2k-1}}&(\delta_2\delta_4)^{m_{2k}}](\delta_1\delta_3)^{m_{2d-1}}\\
&\times\delta_5[\prod_{k=1}^{d-1}(\delta_2\delta_4)^{m_{2k-1}}(\delta_1\delta_3)^{m_{2k}}](\delta_2\delta_4)^{m_{2d-1}}
\end{align*} if $d>1$ and 
$$w=\delta_5(\delta_1\delta_3)^{m_{2d-1}}\delta_5(\delta_2\delta_4)^{m_{2d-1}}$$ if $d=1$.
By \refthm{MRep}, \refrmk{MRep}, and \reflem{eigenValues2}, 
$$\rho(w)=(\fC\fW)(\fC\fE\fW\fE)=(\fC\fW)(\fE\fC\fW\fE)=(\fC\fW\fE)^2$$
where $\fW=\fW(m_1,m_2,\cdots, m_{2d-1})$. Then, by \refcor{PennCstr}, the stretch factor of $M_\varphi^\star\circ M_\varphi^\star$ is the Perron-Frobenius eigenvalue of $(\fC\fW\fE)^2$. 
Since $$\lambda_{M_\varphi^\star\circ M_\varphi^\star}=\lambda_{M_\varphi^\star}^2$$, by \refrmk{PowEigen2}, we can see that $\lambda_{M_\varphi^\star}$ is equal to the largest eigenvalue of $\fC\fW\fE$.
Therefore, 
by \reflem{distEigen2}, 
$$\lambda_{M_\varphi^\star}=
s+\frac{1}{2}\sqrt{5s^2+4ts+4}+\frac{1}{2}\sqrt{9s^2+4ts+4s\sqrt{5s^2+4ts+4}}
.$$ 
Thus, by the  computation in Case $3$, we can conclude that 
$$\frac{\lambda_{M_\varphi^\star}}{\mu_{M_\varphi}}<10.$$
\end{proof}

\section{Application : The geodesic counting problem} \label{Sec:GeodesicCounting}

The geodesic counting problem is one of the famous topic in dynamics. Hence, there are many variation of geodesic counting problem. Espceically, we want to emphasize the geodesic counting problems \cite{EskinMirzakhani11} in a Moduli space  and \cite{KapovichPfaff18} in the outer automorphism groups of a free group.

In this section, we discuss the geodesic counting problem in handlebody groups, inspired by \cite{EskinMirzakhani11},\cite{ChowlaCowlesCowles80} and \cite{KapovichPfaff18}. Let $S_{g,p}$ be the hyperbolic surface with genus $g$ and $p$ punctures.
For $R>0$, let $N_{g,p}
(R)$ be the number of conjugacy classes of pseudo-Anosov elements of $\Mod(S_{g,p})$ of translation length at most $R$ in the Teichm\"uller space $\cT(S_{g,p})$.

\begin{thm}[ \cite{EskinMirzakhani11}]\label{Thm:countInMod}
The number $N_{g,p}(R)$ is asymptotic to $e^{hR}/hR$ as $R\to \infty$, where $h=6g-6+2p$. Namely, 
$$\frac{N_{g,p}(R)}{e^{hR}/hR} \to 1 \text{ as }R\to \infty.$$
\end{thm}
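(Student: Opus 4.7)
The plan is to reduce the counting problem from conjugacy classes of pseudo-Anosovs to closed orbits of the Teichm\"uller geodesic flow on moduli space, and then to apply a Margulis-style counting argument driven by mixing of the flow. First I would set up the standard dictionary: each conjugacy class of a pseudo-Anosov $\varphi \in \Mod(S_{g,p})$ corresponds (up to finite-to-one ambiguity from orientation and symmetries of the invariant Teichm\"uller disk) to a primitive closed orbit of the Teichm\"uller geodesic flow $g_t$ on the unit area stratum $Q^1 \cM(S_{g,p})$ of quadratic differentials, and the period of that orbit is exactly $\log \lambda_\varphi = t_{(\cT(S_{g,p}), d_{Teich})}(\varphi)$. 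The axis of $\varphi$ in $\cT(S_{g,p})$ descends to a closed geodesic in $\cM(S_{g,p})$ of that length, so $N_{g,p}(R)$ equals, up to a bounded multiplicative error and a manageable correction for non-primitive orbits, the number of primitive closed orbits of $g_t$ of period at most $R$.

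Next I would follow the Margulis program for counting closed orbits of a hyperbolic flow. Fix the Masur--Veech probability measure $\nu$ on $Q^1 \cM(S_{g,p})$ and use that $g_t$ is mixing, with a strong unstable foliation whose leafwise dimension equals $6g-6+2p$, on which the flow expands transverse measures at unit rate. Thus the flow has topological entropy $h = 6g-6+2p$. Given mixing plus a suitable local product structure for $\nu$, Margulis's argument (or equivalently Bowen's thermodynamic formalism, or a Bowen--Margulis equidistribution of closed orbits) produces the asymptotic
\[
  \#\{\text{primitive closed orbits of period} \le R\} \sim \frac{e^{hR}}{hR}.
\]
Inserting the dictionary from the first step recovers the claimed asymptotic for $N_{g,p}(R)$.

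The hard part is that $\cM(S_{g,p})$ is non-compact and $Q^1\cM(S_{g,p})$ is stratified, so the usual Margulis machine, which is tailored to compact hyperbolic systems, does not apply off the shelf. I would need a quantitative non-divergence estimate for the Teichm\"uller flow, saying that orbit segments spending a definite fraction of their length in the $\varepsilon$-thin part of moduli space form a set whose $\nu$-measure decays exponentially in the length, together with sector estimates for the $\mathrm{SL}(2,\RR)$-action on the stratum. These are precisely the ingredients supplied by Eskin--Masur non-divergence, Athreya's quantitative recurrence, and the regularity of the Hodge bundle. Given such tools, one shows that orbits making deep excursions into the cusp contribute negligibly to the count at scale $e^{hR}/(hR)$, so the compact-case Margulis asymptotic survives in the non-compact setting; establishing this quantitative control of cusp excursions, and marrying it to the local product structure of $\nu$, is where I expect essentially all of the difficulty to lie.
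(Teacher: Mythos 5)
The paper does not prove this statement; it is cited verbatim from Eskin--Mirzakhani \cite{EskinMirzakhani11}, so there is no ``paper's own proof'' against which to compare. That said, your sketch is a reasonable high-level outline of how the result is actually established. The reduction from conjugacy classes of pseudo-Anosovs to primitive closed orbits of the Teichm\"uller geodesic flow, the identification of the entropy $h = 6g-6+2p$ with respect to the Masur--Veech measure, and the recognition that the real difficulty is extending compact-type counting to the non-compact, stratified setting via quantitative non-divergence are all the right signposts.

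Two caveats. First, a minor imprecision: the strong unstable foliation of the unit-area locus has leaf dimension $6g-6+2p-1$, not $6g-6+2p$; the entropy $6g-6+2p$ comes from the product structure of the Masur--Veech measure in terms of transverse measures on the horizontal and vertical foliations, each of ``dimension'' $6g-6+2p$, with the flow scaling them by $e^{\pm t}$. Second, and more substantively, Eskin--Mirzakhani's actual argument leans less on the classical Margulis closed-orbit machine than your sketch suggests. The heavy lifting in their paper is done by (i) the lattice-point count of Athreya--Bufetov--Eskin--Mirzakhani for the $\Mod(S_{g,p})$-action on Teichm\"uller space, which controls the exponential growth rate, (ii) a ``biased random walk'' estimate showing that closed geodesics that spend a definite fraction of their length in the $\varepsilon$-thin part contribute a lower exponential rate and hence negligibly, and (iii) a closing lemma relating recurrent orbit segments to closed orbits. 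The role you assign to mixing plus a Bowen--Margulis product structure is effectively replaced there by the lattice count together with the hyperbolicity-through-the-Hodge-norm estimates; a Margulis-style symbolic-dynamics route closer to your description was carried out independently by Hamenst\"adt. Your identification of the key technical obstruction (deep cusp excursions) and the tools needed (Eskin--Masur non-divergence, Athreya recurrence) is exactly right, so this is a faithful sketch of the landscape even if the precise counting engine differs from the one Eskin--Mirzakhani drive.
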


For the once punctured torus $S_{1,1}$, we have that the number
$N_{1,1}(R)$ is asymptotic to $e^{2R}/2R$ as $R \to \infty$. Note that $\SL(\ZZ)$ is the (orientation preserving) mapping class group of the once punctured torus. 

For $R>0$ and for $n\in \NN$, we  denote the number of conjugacy classes of fully irreducible outer automoprhisms $\varphi$ of $\Out(F_n)$ with $\log \mu_\varphi \leq R$ by $\mathfrak{N}_n(R)$, following \cite{KapovichPfaff18}. Also, for $R>0$ and for $g\in \NN$,  $\mathfrak{H}_g(R)$ denotes the number of conjugacy classes of pseudo-Anosov elements $\psi$ in $\cH_g$ of translation length at most $R$ in $\cT(\partial V_g)$, namely, $\log(\psi)\leq R$. Note that by \refthm{discretSpecH}, $\fkH_g(R)$ is finite for  all $R>0$.

\begin{prop}\label{Prop:countInTorus}
$N_{1,1}(R)/2\leq \fkN_2(R)$ for all $R>0$.
\end{prop}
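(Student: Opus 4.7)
The plan is to exploit the index-two inclusion $\Mod(S_{1,1}) = \SL(\ZZ) \leq \GL(\ZZ) = \Out(F_2)$ and push $\SL(\ZZ)$-conjugacy classes of pseudo-Anosov elements forward to $\GL(\ZZ)$-conjugacy classes of fully irreducible elements, then show that this forward map is at most two-to-one.

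First I would set up the map. Under the identifications of \refsec{OutToGL} and \refsec{OutToMod}, a pseudo-Anosov $\varphi \in \Mod(S_{1,1})$ corresponds to a matrix $M_\varphi \in \SL(\ZZ) \leq \GL(\ZZ) = \Out(F_2)$ which is hyperbolic as an element of $\PSL{\ZZ}$. As noted at the start of \refsec{Penner}, hyperbolicity in $\PGL{\ZZ}$ is equivalent to full irreducibility in $\Out(F_2)$, so $M_\varphi$ is fully irreducible. Moreover, the analysis in \refsec{trAndStr} identifies the pseudo-Anosov stretch factor $\lambda_\varphi$ with the larger eigenvalue in absolute value, which is exactly $\mu_{M_\varphi}$; hence $\log\lambda_\varphi = \log\mu_{M_\varphi}$. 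Therefore sending an $\SL(\ZZ)$-conjugacy class of a pseudo-Anosov with $\log\lambda_\varphi \leq R$ to its $\GL(\ZZ)$-conjugacy class gives a well-defined map
\[
\iota : \bigl\{\,[\varphi]_{\SL(\ZZ)} : \varphi \text{ pseudo-Anosov},\ \log\lambda_\varphi \leq R\,\bigr\} \longrightarrow \bigl\{\,[\varphi]_{\GL(\ZZ)} : \varphi \text{ fully irreducible},\ \log\mu_\varphi \leq R\,\bigr\}.
\]

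The heart of the argument is to prove $\iota$ is at most two-to-one. Suppose $M_1, M_2 \in \SL(\ZZ)$ lie in the same $\GL(\ZZ)$-conjugacy class but in distinct $\SL(\ZZ)$-classes; then $M_2 = g M_1 g^{-1}$ for some $g \in \GL(\ZZ)$ with $\det g = -1$. If $M_3 \in \SL(\ZZ)$ also satisfies $M_3 = h M_1 h^{-1}$ for some $h \in \GL(\ZZ)$, then either $\det h = 1$, in which case $M_3 \sim_{\SL(\ZZ)} M_1$, or $\det h = -1$, in which case $g^{-1}h$ has determinant one, so $g^{-1}h \in \SL(\ZZ)$, and $M_3 = g(g^{-1}h) M_1 (g^{-1}h)^{-1} g^{-1} \sim_{\SL(\ZZ)} M_2$. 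Hence at most two $\SL(\ZZ)$-classes map to any single $\GL(\ZZ)$-class. Counting the domain via the fibers of $\iota$ gives $N_{1,1}(R) \leq 2\,\fkN_2(R)$, which rearranges to the stated inequality.

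There is no serious obstacle: the proof is a short counting argument once the identifications are in place. The only thing to check carefully is the compatibility of the two stretch factors $\lambda_\varphi$ and $\mu_\varphi$ under the identifications of \refsec{OutF2}, which is immediate from the eigenvalue description recalled in \refsec{trAndStr}.
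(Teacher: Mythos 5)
Your proof is correct and follows essentially the same route as the paper: both use the index-two decomposition $\GL(\ZZ) = \SL(\ZZ) \cup \tilde{e}\,\SL(\ZZ)$ to show that the $\GL(\ZZ)$-conjugacy class of a hyperbolic $M \in \SL(\ZZ)$ meets $\SL(\ZZ)$ in at most two $\SL(\ZZ)$-classes, together with the observation that the stretch factor is the same on both sides of the identification. The paper phrases the two-to-one bound concretely as $\cO_{\GL(\ZZ)}(M) \cap \SL(\ZZ) = \cO_{\SL(\ZZ)}(M) \cup \cO_{\SL(\ZZ)}(\tilde{e}M\tilde{e})$, while you give the abstract coset argument, but these are the same idea.
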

\begin{proof}
Recall that  $\Out(F_2)=\Mod^\pm(S_{1,1})=\GL(\ZZ)$  and that the (orientation preserving) mapping class group $\Mod(S_{1,1})$ of the once punctured torus is $\SL(\ZZ)$. Note that 
$\GL(\ZZ)= \SL(\ZZ) \cup \tilde{e} \cdot\SL(\ZZ)$.
Here, $$\tilde{e}=\tilde{e}^{-1}=\begin{pmatrix}
    0&1\\
    1&0
\end{pmatrix}.$$
Let $M$ be a hyperbolic matrix in $\SL(\ZZ)$. Then,
$$\cO_{\GL(\ZZ)}(M)= \cO_{\SL(\ZZ)}(M) \cup \cO_{\SL(\ZZ)}(\tilde{e}M\tilde{e})$$ 
As $\tr(M)=\tr(\tilde{e}M\tilde{e})$ and $1=\det(M)=\det(\tilde{e}M\tilde{e})$, we can see that $\cO_{\SL(\ZZ)}(M)$ and  $\cO_{\SL(\ZZ)}(\tilde{e}M\tilde{e})$ are conjugacy classes of fully irreducible outer automoprhisms having the same stretch factor $\mu_M$. Therefore, this implies that $$N_{1,1}(R) \leq 2 \fkN_2(R)$$ for all $R>0$, even though $\cO_{\SL(\ZZ)}(M)$ and  $\cO_{\SL(\ZZ)}(\tilde{e}M\tilde{e})$
 may coincide. 
\end{proof}

\begin{thm}\label{Thm:countH2}
For any $\epsilon>0$, there is a real number $R_\epsilon>0$ such that 
$$(1-\epsilon)\frac{e^{2R}}{400R}< \fkH_2(R)$$ for all $R> R_\epsilon$.
\end{thm}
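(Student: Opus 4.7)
The plan is to chain three ingredients already available in the paper: \refthm{almostMinimal} pushes each fully irreducible $\varphi \in \Out(F_2)$ up to a pseudo-Anosov lift in $\cH_2$ with a controlled stretch factor, \refprop{countInTorus} relates $\fkN_2$ to the Eskin--Mirzakhani count $N_{1,1}$ on the once-punctured torus, and \refthm{countInMod} specialized to $S_{1,1}$ (with $g=1$, $p=1$, $h=2$) supplies the asymptotic $N_{1,1}(R) \sim e^{2R}/(2R)$.

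The first step is to convert \refthm{almostMinimal} into the counting inequality
\[
\fkN_2(R - \log 10) \;\leq\; \fkH_2(R).
\]
For each conjugacy class $[\varphi]$ in $\Out(F_2)$ of a fully irreducible element with $\log \mu_\varphi \leq R - \log 10$, I would choose a representative and invoke \refthm{almostMinimal} to select a pseudo-Anosov $\psi \in A^{-1}(\varphi) \subset \cH_2$ with $\log \lambda_\psi < \log \mu_\varphi + \log 10 \leq R$. The assignment $[\varphi] \mapsto [\psi]_{\cH_2}$ is injective: if $[\psi_1] = [\psi_2]$ in $\cH_2$ via some $g \in \cH_2$, applying the homomorphism $A$ shows that $\varphi_1 = A(\psi_1)$ and $\varphi_2 = A(\psi_2)$ are conjugate in $\Out(F_2)$ via $A(g)$.

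Combining the previous step with \refprop{countInTorus} gives
\[
\fkH_2(R) \;\geq\; \fkN_2(R - \log 10) \;\geq\; \frac{N_{1,1}(R - \log 10)}{2}.
\]
Plugging in the Eskin--Mirzakhani asymptotic $N_{1,1}(R) \sim e^{2R}/(2R)$ and shifting by $\log 10$ yields
\[
\frac{N_{1,1}(R - \log 10)}{2} \;\sim\; \frac{e^{2(R - \log 10)}}{4(R - \log 10)} \;=\; \frac{e^{2R}}{400(R - \log 10)} \;\sim\; \frac{e^{2R}}{400 R}
\]
as $R \to \infty$, since $R/(R - \log 10) \to 1$. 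Hence for every $\epsilon > 0$ there is some $R_\epsilon$ beyond which the right-hand side exceeds $(1 - \epsilon) e^{2R}/(400 R)$, which is the desired bound.

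I do not anticipate any real obstacle: the entire content lies in \refthm{almostMinimal}, and this section is a straightforward bookkeeping synthesis. The only point meriting a careful sentence is the injectivity of the lifting map on conjugacy classes, which is purely formal from $A$ being a surjective homomorphism.
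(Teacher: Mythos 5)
Your proposal is correct and follows essentially the same route as the paper: lift conjugacy classes of fully irreducibles via \refthm{almostMinimal}, use the homomorphism $A$ to see that distinct classes stay distinct, pass to $N_{1,1}$ via \refprop{countInTorus}, and plug in the Eskin--Mirzakhani asymptotic with the $\log 10$ shift and the factor $e^{-2\log 10}=1/100$ producing the $400$ in the denominator. The paper writes out the $\epsilon$--$R_\epsilon$ bookkeeping at the end where you appeal to $\sim$, but that is a presentational difference only.
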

\begin{proof}  
First, observe that for any $f\in \cH_2$, $$A(\cO_{\cH_2}(f))\subset \cO_{\Out(F_2)}(A(f)).$$ Hence, for any $g,h\in \cH_2$ with $\cO_{\Out(F_2)}(A(g))\neq \cO_{\Out(F_2)}(A(h))$, $\cO_{\cH_2}(g)\neq \cO_{\cH_2}(h)$

Choose $s>\log 10$ and assume that an outer automoprhism $\varphi$ in $\Out(F_2)$ is a fully irreducible with $\log \mu_\varphi < s-\log10.$ 
By \refthm{almostMinimal}, we can take a pseudo-Anosov $\psi$ in $A^{-1}(\varphi)$ so that $\log \lambda_\psi<s$. Then, we have 
$$ \fkN_2(s-\log 10)\leq \fkH_2(s)$$
and by \refprop{countInTorus},
$$ \frac{N_{1,1}(s-\log 10)}{2}\leq \fkH_2(s).$$

Now, we choose $\epsilon>0$. By \refthm{countInMod}, there is an real number $r_\epsilon>0$ such that  
$$1-\epsilon < \frac{N_{1,1}(r)}{e^{2r}/2r}< 1+\epsilon$$
for all $r>r_\epsilon$. Set $R_\epsilon=r_\epsilon+\log 10$
 and choose $R>R_\epsilon
$. Then, 
\begin{align*}
    \fkH_2(R)&\geq \frac{N_{1,1}(R-\log 10)}{2}\\
    &> \frac{1}{2}\cdot(1-\epsilon)\frac{e^{2(R-\log 10)}}{2(R-\log 10)}\\
    &=(1-\epsilon)\frac{e^{2R}}{400(R-\log 10)}\\
    &>(1-\epsilon)\frac{e^{2R}}{400R} \ .    
\end{align*}
Thus, for each $\epsilon>0$, there is a real number $R_\epsilon>0$ such that 
$$(1-\epsilon)\frac{e^{2R}}{400R}<\fkH_2(R)$$
for all $R>R_\epsilon$.
\end{proof}

\bibliographystyle{alpha}
\bibliography{biblio.bib}

\end{document}